 \newtheorem{thm}{Theorem}[section]
 \theoremstyle{definition}
 \newtheorem{rem}[thm]{Remark}
 \numberwithin{equation}{section}
\newtheorem{theorem}{Theorem}[section]
\newtheorem{lemma}[theorem]{Lemma}
\theoremstyle{definition}
\theoremstyle{remark}
\begin{document}

\title{Uniform Bound of the Highest Energy for the 3D
Incompressible Elastodynamics}
\author{ Zhen Lei\footnote{School of Mathematical Sciences; LMNS and
Shanghai Key Laboratory for Contemporary Applied Mathematics,
Fudan University, Shanghai 200433, P. R. China. {\it Email:
leizhn@gmail.com, zlei@fudan.edu.cn.}} \ \ and Fan Wang\footnote{School of
Mathematical Sciences, Fudan University, Shanghai 200433, P. R. China.
{\it Email: wangf767@gmail.com.}}}
\date{}
\maketitle

\begin{abstract}
This article concerns the time growth of Sobolev norms of classical
solutions to the 3D incompressible isotropic elastodynamics with small
initial displacements. Given initial data in $H^k_\Lambda$ for a fixed
big integer $k$, the global well-posedness of this Cauchy problem
has been established by Sideris and Thomases in \cite{Sideris_2005}
and \cite{SiderisB_2005, SiderisB_2007}, where the highest-order
generalized energy $E_k(t)$ may have a certain growth in time.
Alinhac \cite{S. Alinhac_2010} conjectured that such a growth in
time may be a true phenomenon, where he proved that
$E_k(t)$ is still uniformly bounded in time only for 3D scalar quasilinear wave
equation under null condition. In this paper, we show that the highest-order
generalized energy $E_k(t)$ is still uniformly bounded for
the 3D incompressible isotropic elastodynamics. The equations
of incompressible elastodynamics can be viewed as
a nonlocal systems of wave type and is inherently linearly
degenerate in the isotropic case. There are three
ingredients in our proof: The first one is that
we still have a decay rate of $t^{- \frac{3}{2}}$ when
we do the highest energy estimate away from the light cone
even though in this case the Lorentz invariance is not available.
The second one is that the $L^\infty$ norm of the good unknowns,
in particular, $\nabla(v + G\omega)$, is shown to have a decay
rate of $t^{- \frac{3}{2}}$ near the light cone. The third one is that
the pressure is estimated in a novel way as a nonlocal nonlinear term with
null structure, as has been recently observed in \cite{LZT_2012}.
The proof employs the generalized energy method of Klainerman,
enhanced by weighted $L^2$ estimates and the ghost weight introduced by Alinhac.

\end{abstract}

\textbf{Keyword}:
Incompressible elastodynamics, uniform bound, null condition,
ghost weight method, generalized energy method.

\section{Introduction}%----------------------------introduction
This article considers the time growth of Sobolev norms of classical
solutions to the Cauchy problem of the 3D incompressible isotropic elastodynamics.
The equations of incompressible elastodynamics display a linear degeneracy in the isotropic
case; i.e., the equation inherently satisfies a null condition. By virtue of this nature, in a series of seminal
works in \cite{Sideris_2005} and \cite{SiderisB_2005, SiderisB_2007}, Sideris and Thomases
proved the global well-posedness of classical
solutions to the 3D incompressible isotropic elastodynamics with small initial displacements
for initial data in $H^k_\Lambda$ for some fixed big integer $k$ (notations will be introduced at
the beginning of Section 2). In those papers the highest-order generalized
energy $E_k(t)$ are shown to have an upper bound depending on time.

This kind of time growth of the highest Sobolev norms of the generalized energy also appears
in the work of Sideris \cite{Sideris_1996, Sideris_2000} and Agemi \cite{Agemi00} for the 3D compressible nonlinear elastic waves,
of Sideris and Tu \cite{Sideris_2002} for the 3D nonlinear wave systems
 and of Alinhac  \cite{S. Alinhac_2001} for the 2D scalar nonlinear wave equation (see also
 \cite{K. Hidano_2004, K. Hidano_20041, A. Hoshiga_2000, S. Katayama_1993, LeiLZ2011} for further results).
In \cite{S. Alinhac_2010},
Alinhac has proved that for 3D scalar quasilinear wave equation with small initial data under null condition, the energy is uniformly
bounded, and he also conjectured that by analogy with similar problems where such a growth has been
proved, that this time growth of the highest energy is a true phenomenon.

In this paper, we show that the highest-order generalized energy $E_k(t)$ is still uniformly bounded for
the Cauchy problem of the 3D incompressible isotropic elastodynamics with small initial data.
As a byproduct, our method presented here also provides a new and simpler proof for global well-posedness of
this problem. Our main theorem is as follows:

\begin{thm}\label{Main}
Let $(v_0, G_0)\in H^k_\Lambda$, with $k\geq 9$. Suppose that $(v_0, F_0) = (v_0, I+G_0)$
satisfy the constraints
\eqref{ConstraintG1} \eqref{ConstraintG2}, and $\|(v_0, G_0)\|_{H^k_\Lambda}<\epsilon$. Then
there exist two positive constants $M$ and $\epsilon_0$ which depend only
on $k$ such that, if $\epsilon\leq\epsilon_0$, then the system of incompressible
Hookean elastodynamics
\eqref{EquationM} with initial data $(v_0, F_0) = (v_0, I+G_0)$ has a unique global solution $(v, F) = (v, I+G)$
which satisfies $(v, G)\in H_\Gamma^k$ and $E_k^{1/2}(t)\leq M\epsilon$ for all $t\in [0, +\infty)$.
\end{thm}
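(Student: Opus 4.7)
The plan is a continuity/bootstrap argument built on the generalized energy method, enhanced by Alinhac's ghost weight and weighted $L^2$ estimates. Standard local well-posedness provides a solution on some interval $[0,T_\ast)$ with $E_k^{1/2}$ continuous in time. One posits the a priori assumption $E_k^{1/2}(t)\leq 2M\epsilon$ on $[0,T]$ and aims to improve it to $E_k^{1/2}(t)\leq M\epsilon$; a standard continuity argument then extends the solution to $[0,\infty)$. The divergence and matrix constraints \eqref{ConstraintG1}--\eqref{ConstraintG2} are propagated by the flow and may be used freely throughout.

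The first substantive step is to extract pointwise decay of low- and mid-order generalized derivatives from the bootstrap assumption. The Klainerman--Sobolev inequality associated with the reduced admissible family $\Lambda$ (in which the Lorentz boosts are absent) together with weighted $L^2$ bounds yields the decay $\langle t\rangle^{-3/2}$ in the region away from the light cone; this is the first of the three ingredients highlighted in the abstract. Near the cone, where this weighted estimate degenerates, the isotropic null structure is crucial: the ``good unknowns'' $v+G\omega$, with $\omega=x/|x|$, satisfy sharper pointwise bounds of the form $\|\nabla(v+G\omega)\|_{L^\infty}\lesssim \epsilon\langle t\rangle^{-3/2}$ in this region, which is the second ingredient.

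The core estimate is at the top order. Applying $\Gamma^\alpha$ with $|\alpha|\leq k$ to the momentum equation in \eqref{EquationM} and pairing with $\Gamma^\alpha v$ in $L^2$, one uses the symmetry of the Hookean stress to produce an energy identity whose remainder consists of commutator cubic (and higher) terms together with a pressure contribution. The pressure is the third and most delicate ingredient: rather than being eliminated by $\nabla\cdot v=0$ (which does not survive commutation at top level without derivative loss), $p$ is recovered from an elliptic equation of the form $\Delta p = \mathrm{div}\,\mathrm{div}(GG^\top-\cdots)$, and, following \cite{LZT_2012}, the right-hand side is recognized as a genuinely nonlocal quadratic term carrying a null structure, so that it is controlled using the sharp decay of $\nabla(v+G\omega)$. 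In parallel, the Alinhac ghost weight multiplier supplies an extra spacetime $L^2$ bound on the good derivatives $\bar\partial = \partial_t + \omega\cdot\nabla$ of the unknowns, which absorbs exactly the borderline commutators that would otherwise force a logarithmic growth of $E_k$.

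The main obstacle will be the coupling of these three mechanisms at the top order. Each commutator $[\Gamma^\alpha,\mathrm{nonlinearity}]$ decomposes into pieces where either all $k$ derivatives fall on one factor (so the other factor must be placed in $L^\infty$ with the sharp $\langle t\rangle^{-3/2}$ decay, using the near/far-cone split above) or the derivatives distribute more evenly but at least one factor lacks tangential structure (and must therefore be handled through the ghost-weight spacetime bound). Balancing the near-cone and far-cone contributions so that no logarithmic loss in $t$ survives, and verifying that the nonlocal Riesz transform arising in the pressure representation does not destroy the null cancellation, is the delicate heart of the argument. Once achieved, the resulting differential inequality for $E_k$ integrates to $E_k(t)\leq C\epsilon^2$ uniformly in $t$, which closes the bootstrap for $M$ chosen large relative to $C$ and $\epsilon_0$ small relative to $M$, and continuation yields the global solution asserted in Theorem~\ref{Main}.
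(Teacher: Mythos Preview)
Your proposal is correct and follows essentially the same route as the paper: a bootstrap closed by a ghost-weighted energy identity at top order, with the nonlinearities split according to $r\lessgtr \langle t\rangle/16$, the pressure handled as a nonlocal null form via the elliptic equation, and the resulting differential inequality $\widetilde E_k'\lesssim (1+t)^{-3/2}E_k^{3/2}$ integrated to close.

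One small but substantive correction: for this first-order symmetric system the ghost weight does \emph{not} produce a spacetime $L^2$ bound on the tangential derivatives $\bar\partial=(\partial_t+\omega\cdot\nabla)U$ as in the scalar wave setting. Pairing the $v$-equation with $e^{-q}\Gamma^\alpha v$ and the $G$-equation with $e^{-q}\Gamma^\alpha G$ (you must use both) yields instead the algebraic good quantities
\[
Z=\sum_{|\alpha|\le k}\int\frac{e^{-q}}{1+(t-r)^2}\Big(|\Gamma^\alpha v+\Gamma^\alpha G\omega|^2+|\Gamma^\alpha G-\Gamma^\alpha G\,\omega\otimes\omega|^2\Big)\,dx,
\]
and it is precisely these combinations (rather than $\bar\partial U$) that absorb the borderline near-cone terms when the low-order factor carries no derivative.
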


The theorem is also true for general incompressible isotropic elastodynamics, which can be regarded as
a higher order nonlinear correction to the Hookean case. See Section 10 for more details.

We recall that the global existence of nonlinear wave and elastic systems in dimension two or three,
if it is true, hinges on two basic assumptions: the smallness of the initial data and the null
condition of nonlinearities. The omission of either of these assumptions can lead to the break down of
solutions in finite time \cite{F. John_19841, TahZ_1998}. For 3D quasilinear wave equation
whose quadratic nonlinearities satisfy the null condition, the global well-posedness
theory for small initial data was shown independently
by Christodoulou \cite{Christodoulou_1986} and Klainerman \cite{Klainerman_1986}.
The result of Christodoulou \cite{Christodoulou_1986} relied on the
conformal method, the null condition implies then that the nonlinear terms of
the equations transform into smooth terms, and the problem is reduced to a
local problem with small data. While the proof of Klainerman uses a special energy
inequality for the wave equation, which is obtained by multiplying by an appropriate
vector field with quadratic coefficients, which we called the generalized energy method,
see also \cite{Hormander_1997} for an account of both aspects.
The generalized energy method of Klainerman can be refined
to prove the global existence for compressible elastic waves with small initial data under the null condition \cite{Sideris_1996, Sideris_2000},
3D quasilinear wave equations with small initial data under the null condition \cite{Sideris_2002}, and incompressible elastic fluids with small initial data
\cite{Sideris_2005, SiderisB_2005, SiderisB_2007, Kessenich_2009}. Alinhac \cite{S. Alinhac_2001} can even enhance Klainerman's
generalized energy method to prove the global well-posedness of 2D quasilinear wave equation with small initial data under the null condition via a ghost weighted method.
However, in all the aforementioned results, the upper bound of the highest generalized energy depends on time.

The incompressible elasticity equation can be viewed as
a nonlocal system of wave type and is inherently linearly degenerate in the isotropic case.
Recently, a significant progress has been made in
\cite{LZT_2012} where the authors proved  the almost global existence of classical solutions for the 2D
incompressible isotropic elastodynamics by combining the generalized energy method
of Klainerman,  the weighted $L^2$ estimates of Sideris and the ghost weight method of Alinhac.
A key point in \cite{LZT_2012}, among other things, is that one can still use the null condition in the highest
order energy estimate. Motivated by \cite{LZT_2012}, in this paper, we show a uniform bound for the highest
generalized energy of solutions to the Cauchy problem of 3D incompressible isotropic elastodynamics
with small initial displacements. Let us mention that the final landmark work is the global wellposedness of the 2d incompressible elastodynamics, which is highly nontrivial and was recently solved by Lei in \cite{LeiZ13}.

There are three
ingredients in our proof: The first one is that
we still have a decay rate of $t^{- \frac{3}{2}}$ when we do the highest energy estimate away from the light cone
even though in this case the Lorentz invariance is not available. This is achieved by a refined Sobolev
embedding theorem which enables us to gain one spatial derivative. This one extra derivative allows us to obtain the 
$L^2$norm of $\langle t\rangle^{\frac{3}{2}}\nabla^2 U$ away from the light cone. See Lemma \ref{Genergy} for details.
And we also need to use the weighted
generalized energy defined by Sideris in \cite{Sideris_2005}. The second one is that
the $L^\infty$ norm of the good unknowns, in particular, $\nabla(v + G\omega)$, is shown to have a decay
rate of $t^{- \frac{3}{2}}$ near the light cone.
Let us mention that
%We also notice that by Lemma \ref{Decayr3/2}, which was proved by
%P. Kessenich in \cite{Kessenich_2009},
the special quantities
$\Gamma^\alpha v w$, $w\Gamma^\alpha G$ are also shown to have $t^{-3/2}$ time decay
in the region $r\geq \frac{\langle t\rangle}{16}$.
The third one is that
the pressure is estimated in a novel way as a nonlocal nonlinear term with
null structure, as has been recently observed in \cite{LZT_2012} in the two-dimensional case.
The proof also employs the generalized energy method
of Klainerman, enhanced by weighted $L^2$ estimates and the ghost weight introduced by Alinhac.

This paper is organized as follows: In Section 2 we  introduce some notations
and the system of isotropic elastodynamics. In Section 3 we discuss the commutation properties of
modified Klainerman's vector fields with equations and constraints. Then some calculus inequalities are
presented in Section 4. In Section 5, we will treat the pressure term. And in Section 6,
we show that the weighted generalized energy is controlled by the generalized energy for small solutions. In Section
7, we prove that the good unknown
$\partial_r(\Gamma^\alpha v+\Gamma^\alpha Gw)$ has a better decay
estimate \eqref{DecayOS2},
which gives $t^{-3/2}$ time decay in $L^\infty(r\geq \frac{\langle t \rangle}{16})$.
And in Section 8, we obtain $t^{-3/2}$ time decay in $L^\infty(r\leq \frac{\langle t \rangle}{16})$
for the derivative of the solution.
In Section 9 we complete the proof of the main result via the ghost weight
method introduced first by Alinhac in \cite{S. Alinhac_2001}.
Finally, we treat the general isotropic case by
regarding it as a higher order nonlinear correction to the Hookean case (in Section 10).

\section{Preliminaries}
Classically the motion of an elastic body is described as a
second-order evolution equation in Lagrangian coordinates. In the
incompressible case, the equations are more conveniently written as a first-order system
with constraints in Eulerian coordinates. We start with a
time-dependent family of orientation-preserving diffeomorphisms
$x(t, \cdot)$, $0 \leq t < T$. Material points $X$ in the
reference configuration are deformed to the spatial position $x(t,
X)$ at time $t$.
Let $X(t, x)$ be the corresponding reference map: $X(t, x)$ is the inverse of $x(t, \cdot).$
\begin{lemma}
Given a family of deformations $x(t, X)$, define the velocity and
deformation gradient as follows:
\begin{equation}
v(t, x) = \frac{dx(t, X)}{dt}\Big|_{X = X(t, x)},\quad F(t, x) =
\frac{\partial x(t, X)}{\partial X}\Big|_{X = X(t, x)}.
\end{equation}
Then for $0 \leq t < T$, $i, j, k \in \{1, 2, 3\}$, we
have
\begin{equation}
\partial_t F+v\cdot \nabla F=\nabla vF,
\end{equation}
\begin{equation}\label{ConstraintG1}
 \partial_jG_{ik} - \partial_kG_{ij}
= G_{mk}\partial_mG_{ij} - G_{mj}\partial_mG_{ik},
\end{equation}
where $G(t, x) = F(t, x) - I$. If in addition $x(t, X)$ is incompressible, that is $\det F(t, x) \equiv 1$, then
\begin{equation}\label{ConstraintG2}
\nabla\cdot F^T=\partial_mF_{mi} = 0 \quad \text{and}\quad\nabla\cdot v=0.
\end{equation}
\end{lemma}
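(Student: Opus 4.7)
The plan is to derive each of the three identities directly from the kinematic relations between Eulerian and Lagrangian descriptions, using only the chain rule, the symmetry of mixed second partial derivatives, and the Jacobi formula for $\det F$.

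For the transport equation $\partial_t F + v\cdot\nabla F = \nabla v F$, I would first work in Lagrangian variables. Setting $\tilde F(t, X) = \partial x(t, X)/\partial X$, commutativity of partial derivatives yields
\begin{equation*}
\partial_t \tilde F_{ij}(t, X) = \partial_{X_j}\bigl[v_i(t, x(t, X))\bigr] = (\partial_l v_i)(t, x(t, X))\,\tilde F_{lj}.
\end{equation*}
Translating back to Eulerian coordinates requires only the relation $\partial_t X(t, x) = -F^{-1} v$, which follows from differentiating the identity $x(t, X(t, x)) = x$ in $t$, combined with the chain rule identity $\partial_{X_L}\tilde F = \partial_m F \cdot F_{mL}$.

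For the curl-type identity \eqref{ConstraintG1}, since $G = F - I$ a short algebraic rearrangement shows that it is equivalent to the symmetric statement
\begin{equation*}
F_{mk}\partial_m F_{ij} = F_{mj}\partial_m F_{ik}.
\end{equation*}
The key observation is that the directional derivative $F_{mk}\partial_{x_m}$, when pulled back to Lagrangian coordinates, coincides with $\partial/\partial X_k$, because $\partial x_m/\partial X_k \cdot \partial/\partial x_m = \partial/\partial X_k$. Hence the identity reduces to the symmetry of mixed second derivatives $\partial^2 x_i/(\partial X_j\partial X_k) = \partial^2 x_i/(\partial X_k\partial X_j)$.

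For the incompressibility consequences \eqref{ConstraintG2}, I would deduce $\nabla\cdot v = 0$ by applying the material derivative $D_t = \partial_t + v\cdot\nabla$ to the identity $\det F \equiv 1$. Using the Jacobi formula and the transport equation established above,
\begin{equation*}
0 = D_t(\det F) = \det F \cdot \mathrm{tr}(F^{-1} D_t F) = \det F\cdot \mathrm{tr}(\nabla v) = \nabla\cdot v.
\end{equation*}
For the Piola-type identity $\partial_m F_{mi} = 0$, I would use a weak formulation: for any test function $\phi$, the change of variables $x = x(t, X)$ with Jacobian $\det F = 1$ gives
\begin{equation*}
\int \partial_m\phi(x)\, F_{mi}(t, x)\,dx = \int \partial_{X_i}\bigl[\phi(x(t, X))\bigr]\,dX = 0,
\end{equation*}
since $\partial_{X_i}[\phi(x(t, X))] = \partial_m\phi \cdot F_{mi}$ by the chain rule. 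This yields $\partial_m F_{mi} = 0$ distributionally, and hence pointwise by smoothness.

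The main obstacle is the Piola-type identity, which is the only step where the chain rule alone is insufficient: one must exploit both the change-of-variables formula and $\det F = 1$ in an essential way. All other identities are straightforward once one consistently tracks the distinction between Eulerian and Lagrangian differentiation.
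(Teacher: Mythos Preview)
Your argument is correct in every part: the transport equation follows from differentiating $\tilde F=\partial x/\partial X$ in $t$ and converting the Lagrangian time derivative to the material derivative; the reduction of \eqref{ConstraintG1} to $F_{mk}\partial_m F_{ij}=F_{mj}\partial_m F_{ik}$ is an exact algebraic rewriting, and since $F_{mk}\partial_{x_m}$ is precisely $\partial/\partial X_k$ along the flow, this is nothing but the equality of mixed partials $\partial^2 x_i/\partial X_j\partial X_k$; finally your Jacobi-formula derivation of $\nabla\cdot v=0$ and the weak-form derivation of $\partial_m F_{mi}=0$ (which is the Piola identity applied to the inverse map $X(t,\cdot)$, with Jacobian $\det F^{-1}=1$) are both sound.

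There is nothing to compare against in the paper: the authors do not give a proof of this lemma but simply cite \cite{LeiLZ08} for \eqref{ConstraintG1} and \cite{Sideris_2005, SiderisB_2007} for the remaining identities. Your self-contained derivation therefore supplies more than what the paper itself provides.
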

\begin{proof}
\eqref{ConstraintG1} is proved in \cite{LeiLZ08}, for other identity, see for instance \cite{Sideris_2005, SiderisB_2007}.
\end{proof}
Here and in what follows, we use the summation convention over repeated indices.

In this paper, to best illustrate our methods and ideas, let us first consider
the equations of motion for incompressible Hookean elasticity,
which corresponds to the Hookean strain energy function $W(F) =
\frac{1}{2}|F|^2$ and reads
\begin{equation}\label{EquationM}
\begin{cases}
\partial_tv + v\cdot\nabla v + \nabla p = \nabla\cdot(FF^T),\\
\partial_tF + v\cdot\nabla F = \nabla vF,\\
\nabla\cdot v = 0,\ \ \ \ \nabla\cdot F^T=0.
\end{cases}
\end{equation}
As will be seen in Section 10 where the case of general energy function
is discussed, there is no essential loss of generality in considering this
simplest case. Throughout this paper we will adopt the notations of
$$(\nabla \cdot F)_i=\partial_jF_{ij},\quad (\nabla v)_{ij}=\partial_j v_i.$$

Most of our norms will
be in $L^2$ and most integrals will be taken over $\mathbb{R}^3$, so we write
\begin{equation}
\begin{split}
\|\cdot\|=\|\cdot\|_{L^2(\mathbb{R}^3)},
\end{split}
\end{equation}
and
\begin{equation}
\int=\int_{\mathbb{R}^3}.
\end{equation}
We use the usual derivative vector fields
\begin{equation}
\partial=(\partial_t, \partial_1, \partial_2, \partial_3)\ \ \ \text{and} \ \ \ \nabla= ( \partial_1, \partial_2, \partial_3).
\end{equation}
The scaling operator is denoted by:
\begin{equation}
S=t\partial_t+\sum_{j=1}^3 x_j\partial_j =t\partial_t + r\partial_r,
\end{equation}
and its time independent analogue is
\begin{equation}
S_0= r\partial_r,
\end{equation}
here, the radial derivative is defined by $\partial_r=\frac{x}{r}\cdot \nabla,\ \ r=|x|$.
The angular momentum operators are defined by
\begin{equation}
\Omega = x\wedge \nabla.
\end{equation}

In this paper, the notation $U= (v, G)$ is used frequently, where $v$ are three dimensional
real valued vector functions
and $G$ are three by three real valued matrix functions. The arguments of these functions are
nearly always suppressed.
As in \cite{Sideris_2005, SiderisB_2007} and \cite{Kessenich_2009}, we define
\begin{equation*}
\widetilde{\Omega}_i(U)= (\Omega_i G+ [V^{i}, G], \Omega_i v+V^{i}v),
\end{equation*}
where
\begin{equation}
\begin{split}
&V^{1}=e_2\otimes e_3- e_3\otimes e_2,\\
&V^{2}=e_3\otimes e_1- e_1\otimes e_3,\\
&V^{3}=e_1\otimes e_2- e_2\otimes e_1,\\
\end{split}
\end{equation}
and $[A,B] = AB - BA$ denotes the standard Lie bracket product.
Occasionally we will write
\begin{equation}
\widetilde{\Omega} G= \Omega G+ [V, G]\ \ \text{for}\ \ G\in \mathbb{R}^3\otimes \mathbb{R}^3,
\end{equation}
and
\begin{equation}
\widetilde{\Omega} v= \Omega v+ V v\ \ \text{for}\ \ v\in \mathbb{R}^3.
\end{equation}
For scalar functions $f$ we define
\begin{equation}
\widetilde{\Omega} f= \Omega f.
\end{equation}
We shall frequently use the decomposition
\begin{equation}\label{Decompose}
\nabla=\frac{x}{r}\partial_r-\frac{x}{r^2}\wedge \Omega.
\end{equation}
Our vector fields will be written succinctly as $\Gamma$. We let
\begin{equation}
\Gamma = (\Gamma_1, \cdots, \Gamma_8) = (\partial, \widetilde{\Omega}, S).
\end{equation}
Hence by $\Gamma U$ we mean any one of $\Gamma_i U$. By $\Gamma^a$, $a= (a_1, \cdots, a_\kappa)$,
we denote an ordered product of $k=|a|$ vector fields $\Gamma_{a_1}\cdots \Gamma_{a_k}$, we note
that the commutator of any $\Gamma^{,}s$ is again a $\Gamma$.

Define the generalized energy by
\begin{equation}
E_k(t) = \sum_{|\alpha|\leq k}\big(\|\Gamma^\alpha v(t, \cdot)\|^2 + \|\Gamma^\alpha G(t, \cdot)\|^2 \big).
\end{equation}
We also define the weighted energy norm
\begin{equation}
X_k(t) = \sum_{|\alpha|\leq k-1}\big(\|\langle t-r \rangle\nabla\Gamma^\alpha v\|^2+
\|\langle t-r \rangle\nabla\Gamma^\alpha G\|^2 \big),
\end{equation}
in which we denote $\langle \sigma \rangle=\sqrt{1+ \sigma^2}$.

In order to characterize the initial data, we introduce the time independent analogue of $\Gamma$.
The only difference will be in the scaling operator. Set
\begin{equation}
\Lambda=(\Lambda_1, \cdots, \Lambda_7) = (\nabla, \widetilde{\Omega}, S_0).
\end{equation}
Then the commutator of any two $\Lambda^,s$ is again a $\Lambda$.
Define
\begin{equation}
H_\Lambda^k = \{U=(v, G): \mathbb{R}^3\rightarrow \mathbb{R}^3\times (\mathbb{R}^3\otimes \mathbb{R}^3):
\sum_{|\alpha|\leq k}\|\Lambda^\alpha U\|<\infty\}.
\end{equation}
Solutions will be constructed in the space
\begin{equation}
\begin{split}
H_\Gamma^k = \{&U=(v, G): [0, \infty)\times\mathbb{R}^3\rightarrow
\mathbb{R}^3\times (\mathbb{R}^3\otimes \mathbb{R}^3):\\
&(v, G)\in \cap_{j=0}^kC^j([0, \infty); H_\Lambda^{k-j})\}.
\end{split}
\end{equation}

\section{Commutation}
Write $F=I+G$, we obtain the following systems:
\begin{equation}\label{EquationG}
\begin{cases}
\partial_tv -\nabla\cdot G   =-\nabla p-v\cdot\nabla v+ \nabla\cdot(GG^T),\\
\partial_tG -\nabla v=- v\cdot\nabla G +\nabla vG,\\
\nabla\cdot v = 0,\ \ \ \nabla\cdot G^T = 0,
\end{cases}
\end{equation}
where $G$ satisfies the following equation
\begin{equation}\label{ConstraintG0}
\partial_k G_{ij}-\partial_j G_{ik}=G_{lj}\partial_l G_{ik}-G_{lk}\partial_l G_{ij}.
\end{equation}
For any multi-index $\alpha$, apply
$\Gamma^\alpha$ to the equation, we have the following  commutation properties, see, for instance \cite{LZT_2012}.
\begin{equation}\label{Equation}
\begin{cases}
\partial_t\Gamma^\alpha v - \nabla\cdot\Gamma^\alpha G =  - \nabla\Gamma^\alpha p + f_\alpha, \\
\partial_t\Gamma^\alpha G  - \nabla\Gamma^\alpha v = g_\alpha,\\
\nabla\cdot\Gamma^\alpha v = 0,\quad \nabla\cdot\Gamma^\alpha G^T
= 0,
\end{cases}
\end{equation}
where
\begin{equation}\label{Equationfg}
\begin{cases}
f_\alpha =-\sum_{\beta + \gamma = \alpha}\Gamma^\beta v\cdot\nabla \Gamma^\gamma v
+\sum_{\beta + \gamma = \alpha}\nabla\cdot(\Gamma^\beta G \Gamma^\gamma G^T),\\
g_\alpha = - \sum_{\beta + \gamma = \alpha}
\Gamma^\beta v\cdot\nabla \Gamma^\gamma
G + \sum_{\beta + \gamma = \alpha}\nabla\Gamma^\beta v \Gamma^\gamma G,
\end{cases}
\end{equation}
we also have
\begin{equation}\label{ConstraintG}
\partial_k\Gamma^\alpha G_{ij} - \partial_j\Gamma^\alpha G_{ik} = h_\alpha,
\end{equation}
where
\begin{equation}\nonumber
h_\alpha = \sum_{|\beta| + |\gamma| = |\alpha|}
\Gamma^\beta G_{lj}\partial_l\Gamma^\gamma
G_{ik} - \Gamma^\beta
G_{lk}\partial_l\Gamma^\gamma G_{ij}.
\end{equation}

\section{Calculus Inequalities}
In this section, a few elementary but useful inequalities will be prepared. Such inequalities capture
decay at spatial infinity through the use of the vector field $\Gamma$. The first result appeared in
\cite{Sideris_2000}, and the second appeared in \cite{Sideris_2005, SiderisB_2007}. We will omit the proofs.

\begin{lemma}\label{Ginequality}
For $u\in C_0^\infty(\mathbb{R}^3)^3$, $r=|x|$, and $\rho= |y|$,
\begin{equation}
\begin{split}
r^{1/2}|u(x)| & \lesssim\sum_{|\alpha|\leq 1}\|\nabla \widetilde{\Omega}^\alpha u\|,\\
r|u(x)| & \lesssim\sum_{|\alpha|\leq 1} \|\partial_r \widetilde{\Omega}^\alpha u\|_{L^2(|y|\geq r)}^{1/2}\cdot
\sum_{|\alpha|\leq 2} \|\widetilde{\Omega}^\alpha u\|_{L^2(|y|\geq r)}^{1/2}.
\end{split}
\end{equation}
\end{lemma}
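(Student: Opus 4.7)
The plan is the standard Klainerman--Sideris recipe for weighted pointwise inequalities on $\mathbb{R}^3$: combine a one-dimensional radial fundamental-theorem-of-calculus estimate (trading a power of $r$ for a radial derivative on the exterior $\{|y|\geq r\}$) with a Sobolev embedding on the 2-sphere (trading angular derivatives for a pointwise value). The replacement of $\Omega = x\wedge\nabla$ by $\widetilde{\Omega} = \Omega + V$ is harmless because $V$ is a bounded constant-coefficient matrix, so $\|\Omega^\alpha u\|$ and $\|\widetilde{\Omega}^\alpha u\|$ are equivalent modulo lower-order terms, as are $\|\nabla\Omega^\alpha u\|$ and $\|\nabla\widetilde{\Omega}^\alpha u\|$ via $[\nabla,\Omega]\in\mathrm{span}(\nabla)$.

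For the radial step, since $u\in C_c^\infty$,
\[
|u(r\omega)|^2 = -\int_r^\infty \partial_\rho|u(\rho\omega)|^2\,d\rho \leq 2\int_r^\infty |u(\rho\omega)||\partial_\rho u(\rho\omega)|\,d\rho.
\]
Multiplying by $r^2\leq \rho^2$, integrating over $\omega\in S^2$, and applying Cauchy--Schwarz yields
\[
r^2\|u(r\cdot)\|_{L^2(S^2)}^2 \leq 2\|u\|_{L^2(|y|\geq r)}\|\partial_r u\|_{L^2(|y|\geq r)},
\]
and since $\Omega$ commutes with $\partial_r$, the same estimate holds with $\Omega^\alpha u$ in place of $u$.

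For the spherical step, since $S^2$ is a compact 2-manifold $H^s(S^2)\hookrightarrow L^\infty(S^2)$ for $s>1$; interpolating $H^{3/2}(S^2)\hookrightarrow L^\infty$ between $H^1(S^2)$ and $H^2(S^2)$ yields
\[
\|f\|_{L^\infty(S^2)}^2 \lesssim \|f\|_{H^1(S^2)}\|f\|_{H^2(S^2)}.
\]
Applied at each radius to $f=u(r\cdot)$ and combined with the radial bound for each $\Omega^\alpha u$, $|\alpha|\leq 2$, this produces, after Cauchy--Schwarz in the finite sum over $\alpha$, replacement of $\Omega$ by $\widetilde{\Omega}$, and the harmless bound $\|\partial_r v\|\leq \|\nabla v\|$, the second inequality of the lemma. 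The first, weaker, inequality then follows from the cruder embedding $\|f\|_{L^\infty(S^2)}^2\lesssim \|f\|_{H^2(S^2)}^2$ together with $\|\Omega^\alpha u\|\,\|\partial_r\Omega^\alpha u\|\leq \|\nabla\widetilde{\Omega}^\alpha u\|^2$, which absorbs both sides into a single $L^2$-norm at the cost of downgrading the weight from $r$ to $r^{1/2}$.

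The delicate point, and where I expect the most care is needed, is matching the derivative counts exactly as stated. The integer embedding $H^2(S^2)\hookrightarrow L^\infty$ naturally produces sums running up to $|\alpha|=2$ on \emph{both} factors of the second inequality; obtaining the asymmetric $|\alpha|\leq 1$ on the $\partial_r\widetilde{\Omega}^\alpha u$-factor really requires the Gagliardo--Nirenberg splitting $L^\infty\lesssim H^1\cdot H^2$ above, combined with a careful Cauchy--Schwarz in the finite sum so that the extra angular derivative lands on $\|\widetilde{\Omega}^\alpha u\|$ rather than on $\|\partial_r\widetilde{\Omega}^\alpha u\|$. Analogously, the leaner $\sum_{|\alpha|\leq 1}\|\nabla\widetilde{\Omega}^\alpha u\|$ form of the first inequality exploits that the outer $\nabla$ already contributes one of the derivatives required by the spherical Sobolev embedding, which is made precise by the commutator identity $[\nabla,\Omega]\in \mathrm{span}(\nabla)$ folding one angular derivative off $u$ onto $\nabla u$.
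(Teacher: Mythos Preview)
The paper does not actually prove this lemma: immediately after stating it, the authors write ``The first result appeared in \cite{Sideris_2000}, and the second appeared in \cite{Sideris_2005, SiderisB_2007}. We will omit the proofs.'' So there is no in-paper argument to compare against; the relevant comparison is with the cited Sideris and Sideris--Thomases papers.

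Your scheme---one-dimensional radial fundamental-theorem-of-calculus trace on $\{|y|\ge r\}$ combined with Sobolev embedding on $S^2$, together with the equivalence of $\Omega$ and $\widetilde{\Omega}$ modulo bounded zero-order corrections---is precisely the machinery those references use, so at the level of strategy your proposal matches the intended proof.

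The one place your sketch does not quite close is the point you yourself flag as delicate: obtaining the \emph{asymmetric} derivative counts. Running Agmon's inequality $\|f\|_{L^\infty(S^2)}^2\lesssim \|f\|_{H^1}\|f\|_{H^2}$ and then applying the radial trace $r^2\|v(r\cdot)\|_{L^2(S^2)}^2\lesssim \|v\|\,\|\partial_r v\|$ to \emph{each} factor separately produces, after Cauchy--Schwarz, a bound of the shape
\[
r^2|u|^2\ \lesssim\ \Big(\sum_{|\alpha|\le 1}\|\partial_r\Omega^\alpha u\|\Big)^{1/2}
\Big(\sum_{|\alpha|\le 1}\|\Omega^\alpha u\|\Big)^{1/2}
\Big(\sum_{|\beta|\le 2}\|\partial_r\Omega^\beta u\|\Big)^{1/2}
\Big(\sum_{|\beta|\le 2}\|\Omega^\beta u\|\Big)^{1/2},
\]
and no rearrangement of the finite sums eliminates the unwanted factor $\sum_{|\beta|\le 2}\|\partial_r\Omega^\beta u\|$. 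The claim that ``a careful Cauchy--Schwarz'' routes the extra angular derivative onto the $\|\widetilde{\Omega}^\alpha u\|$ side is not justified as written. The references get the sharper count by a slightly different organization of the radial and angular steps (in particular, not applying the radial trace symmetrically to both factors); if you want the lemma exactly as stated you should consult the proof in \cite{SiderisB_2007} rather than rely on the splitting you wrote down. For all applications later in the paper the weaker symmetric version would in fact suffice, but that is a separate observation.
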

In this paper, we write $X\lesssim Y$ to indicate $X\leq CY$ for some constant $C> 0$,
and $X \sim Y$ whenever $X\lesssim Y\lesssim X$.
\begin{lemma}
Let $U\in H_\Gamma^k$, with $X_k[U(t)]<\infty$ and $|U|<\delta$ small. Then
we have
\begin{equation}\label{Ginequality2}
\begin{split}
&\langle r\rangle|\Gamma^\alpha U(t, x)|\lesssim E_k^{1/2}[U(t)],\quad |\alpha|+2\leq k,\\
&\langle r\rangle\langle t-r\rangle |\nabla\Gamma^\alpha U(t, x)|\lesssim X_k^{\frac{1}{2}}[U(t)],\quad |\alpha|+3\leq k.\\
\end{split}
\end{equation}
\end{lemma}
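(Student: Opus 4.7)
My plan is to derive both pointwise bounds from Lemma~\ref{Ginequality} by applying it to a suitably weighted quantity, using throughout that $\nabla,\widetilde{\Omega}\in\Gamma$ and that the commutator $[\widetilde{\Omega},\nabla]$ is a linear combination of components of $\nabla$.

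For the first estimate, I apply the second inequality of Lemma~\ref{Ginequality} to $u(x)=\Gamma^\alpha U(t,x)$ at fixed $t$. Every factor $\widetilde{\Omega}^\beta\Gamma^\alpha U$ and $\partial_r\widetilde{\Omega}^\beta\Gamma^\alpha U$ with $|\beta|\le 2$ can, after bounding $|\partial_r|\le|\nabla|$ pointwise and using $\widetilde{\Omega},\nabla\in\Gamma$, be rewritten as $\Gamma^{\alpha'}U$ for some $|\alpha'|\le|\alpha|+2\le k$, and is therefore controlled in $L^2$ by $E_k^{1/2}[U(t)]$. This produces $r|\Gamma^\alpha U(t,x)|\lesssim E_k^{1/2}[U(t)]$. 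For $r\le 1$ the factor $\langle r\rangle$ is bounded, and the Sobolev embedding $H^2(\mathbb{R}^3)\hookrightarrow L^\infty(\mathbb{R}^3)$ applied to $\Gamma^\alpha U$ gives the same bound under the same derivative count. Combining the two regimes yields the first claim.

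For the second estimate, I apply the same lemma to the weighted function $w(x):=\langle t-r\rangle\,\nabla\Gamma^\alpha U(t,x)$. Because $\widetilde{\Omega}$ acts only tangentially, it commutes with the radial weight, so $\widetilde{\Omega}^\beta w=\langle t-r\rangle\,\widetilde{\Omega}^\beta\nabla\Gamma^\alpha U$; the commutator identity lets me rewrite this as a sum of terms $\langle t-r\rangle\,\nabla\Gamma^{\alpha'}U$ with $|\alpha'|\le|\alpha|+|\beta|\le k-1$, directly controlled by $X_k^{1/2}[U(t)]$. When $\partial_r$ falls on $w$ it either hits the weight, producing the pointwise-bounded multiplier $-\tfrac{t-r}{\langle t-r\rangle}$ times $\nabla\Gamma^\alpha U$, whose $L^2$ norm is dominated by $\|\langle t-r\rangle\nabla\Gamma^\alpha U\|\le X_k^{1/2}[U(t)]$, or it hits $\nabla\Gamma^\alpha U$, in which case the resulting second derivative is re-expressed via $\nabla\in\Gamma$ as $\nabla\Gamma^{\alpha''}U$ with $|\alpha''|\le|\alpha|+1\le k-1$, once again controlled by $X_k^{1/2}[U(t)]$.

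The small-$r$ regime for the second inequality requires an extra step since $\langle t-r\rangle$ is not bounded there. I would handle it by the Sobolev embedding $|w(x)|\lesssim\|w\|_{H^2(\mathbb{R}^3)}$ applied directly to $w$, expanding $\nabla^j w$ for $j\le 2$ by Leibniz; each weight derivative contributes only a bounded factor, and the terms where all derivatives fall on $\nabla\Gamma^\alpha U$ are re-expressed as weighted $L^2$ norms of $\nabla\Gamma^{\alpha''}U$ with $|\alpha''|\le|\alpha|+2\le k-1$, bounded by $X_k^{1/2}[U(t)]$. The main bookkeeping point, and the reason the derivative count rises from $|\alpha|+2\le k$ in the first inequality to $|\alpha|+3\le k$ in the second, is precisely this one extra spatial derivative consumed when a $\nabla$ (from $\partial_r$ or from the $H^2$ embedding) lands on $\nabla\Gamma^\alpha U$ and must be commuted back into a single $\nabla\Gamma^{\alpha''}U$ so that the weighted norm in $X_k$ can be applied.
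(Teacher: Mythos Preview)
The paper does not provide its own proof of this lemma; it records that the two inequalities appeared in \cite{Sideris_2000} and \cite{Sideris_2005, SiderisB_2007} respectively and states that the proofs are omitted. So there is no in-paper argument to compare against. Your strategy---applying the second inequality of Lemma~\ref{Ginequality} to $\Gamma^\alpha U$ and to $\langle t-r\rangle\nabla\Gamma^\alpha U$ respectively, and handling the small-$r$ regime by the $H^2\hookrightarrow L^\infty$ embedding---is the standard one and is correct in outline.

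One small correction: in the small-$r$ part of the second inequality you write that ``each weight derivative contributes only a bounded factor.'' This is true for first derivatives, since $|\nabla\langle t-r\rangle|\le 1$, but fails for second derivatives:
\[
\partial_i\partial_j\langle t-r\rangle \;=\; \frac{x_ix_j}{r^2\langle t-r\rangle^{3}} \;-\; \frac{t-r}{\langle t-r\rangle}\cdot\frac{\delta_{ij}r^2-x_ix_j}{r^3},
\]
and the second term carries a $1/r$ singularity at the origin. This is easily repaired, either by Hardy's inequality $\|r^{-1}f\|\lesssim\|\nabla f\|$ applied to $f=\nabla\Gamma^\alpha U$ (which converts the offending term into $\|\nabla^2\Gamma^\alpha U\|\le X_k^{1/2}$ under the same derivative count), or more simply by localizing with a cutoff supported in $\{r\le 2\}$ and using that $\langle t-r\rangle\lesssim\langle t-|y|\rangle$ uniformly for $r\le 1$ and $|y|\le 2$, so that the weight can be pulled inside the $L^2$ norm before any differentiation. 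Either way the conclusion stands with $|\alpha|+3\le k$.
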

The following Sobolev inequality gives $(1+t)^{-1}$ time decay
in $L^\infty(r\leq \frac{\langle t \rangle}{16})$ by means of
the weighted Sobolev norms. Instead of using $|f|_{L^\infty}\lesssim\sum_{|\alpha|\leq 2}\|\nabla^\alpha f\|$
as in \cite{Sideris_2005, SiderisB_2007}, we use the inequality
$|f|_{L^\infty(\mathbb{R}^3)}\lesssim \|\nabla f\|^{1/2}\|\nabla^2 f\|^{1/2}$
for any $f\in H^2(\mathbb{R}^3)$, this enables us to gain one spatial derivative.
Therefore the weight appeared in the terms which have at least one spatial derivative.
In Section 8, we will prove that for $U=(v,G)$, the solution of \eqref{EquationG},
$\nabla U$ has $(1+t)^{-3/2}$ time decay in $L^\infty(r\leq \frac{\langle t \rangle}{16})$,
which is important for us to prove our main theorem.
\begin{lemma}\label{DecayT_0}
For all $f\in H^2(\mathbb{R}^3)$, there holds
\begin{equation}
\langle t\rangle |f|_{L^\infty(r\leq \frac{\langle t \rangle}{16})}\lesssim
\|f\|_{L^2(r\leq \frac{\langle t \rangle}{8})}+\|\langle t-r\rangle\nabla f\|_{L^2(r\leq \frac{\langle t \rangle}{8})}
+\|\langle t-r\rangle\nabla^2 f\|_{L^2(r\leq \frac{\langle t \rangle}{8})},
\end{equation}
provide the right hand side is finite.
\end{lemma}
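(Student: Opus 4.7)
The plan is to exploit the Gagliardo--Nirenberg-type inequality $|g|_{L^\infty(\mathbb{R}^3)} \lesssim \|\nabla g\|_{L^2}^{1/2}\|\nabla^2 g\|_{L^2}^{1/2}$ that the author has already singled out as the point of departure from the standard $H^2$ Sobolev embedding. Applying it to a suitable truncation of $f$ simultaneously localizes the $L^\infty$-estimate to $r \le \langle t\rangle/16$ and makes available the lower bound $\langle t - r\rangle \gtrsim \langle t\rangle$ on the enlarged support $r \le \langle t\rangle/8$; this is the mechanism that delivers the factor $\langle t\rangle^{-1}$ on the left-hand side.

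First I would fix a smooth time-dependent cutoff $\chi = \chi_t(x)$ with $\chi \equiv 1$ on $\{r \le \langle t\rangle/16\}$, $\mathrm{supp}\,\chi \subset \{r \le \langle t\rangle/8\}$, and derivative bounds $\|\nabla \chi\|_{L^\infty} \lesssim \langle t\rangle^{-1}$, $\|\nabla^2 \chi\|_{L^\infty} \lesssim \langle t\rangle^{-2}$. Expanding $\nabla^j(\chi f)$ by the product rule for $j=1,2$ gives
\[
\|\nabla(\chi f)\|_{L^2} \lesssim \langle t\rangle^{-1}\|f\|_{L^2(r\le \langle t\rangle/8)} + \|\nabla f\|_{L^2(r\le \langle t\rangle/8)},
\]
\[
\|\nabla^2(\chi f)\|_{L^2} \lesssim \langle t\rangle^{-2}\|f\|_{L^2(r\le \langle t\rangle/8)} + \langle t\rangle^{-1}\|\nabla f\|_{L^2(r\le \langle t\rangle/8)} + \|\nabla^2 f\|_{L^2(r\le \langle t\rangle/8)}.
\]

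Second, on $\mathrm{supp}\,\chi$ one has $\langle t-r\rangle \ge \langle t\rangle - \langle t\rangle/8 \gtrsim \langle t\rangle$, so the unweighted norms $\|\nabla^j f\|_{L^2(r\le \langle t\rangle/8)}$ for $j=1,2$ can be replaced by $\langle t\rangle^{-1}\|\langle t-r\rangle\nabla^j f\|_{L^2(r\le \langle t\rangle/8)}$. Substituting these bounds into the two displays above yields
\[
\|\nabla(\chi f)\|_{L^2} + \|\nabla^2(\chi f)\|_{L^2} \lesssim \langle t\rangle^{-1}\,R,
\]
where $R$ denotes the right-hand side of the claimed inequality.

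Finally, the Gagliardo--Nirenberg inequality applied to $\chi f \in H^2(\mathbb{R}^3)$ gives
\[
|\chi f|_{L^\infty(\mathbb{R}^3)} \lesssim \|\nabla(\chi f)\|_{L^2}^{1/2}\|\nabla^2(\chi f)\|_{L^2}^{1/2} \lesssim \langle t\rangle^{-1}\,R,
\]
and since $\chi\equiv 1$ on $\{r\le \langle t\rangle/16\}$ the conclusion follows. I do not expect a genuine obstacle here; the only point requiring attention is the bookkeeping of weights, namely that each derivative that falls on $\chi$ costs exactly one power of $\langle t\rangle^{-1}$, which is matched precisely by the gain from converting $\|\nabla^j f\|_{L^2}$ to $\langle t\rangle^{-1}\|\langle t-r\rangle \nabla^j f\|_{L^2}$ on the support of $\chi$.
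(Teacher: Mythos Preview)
Your argument is correct and coincides with the paper's own proof: localize by a smooth cutoff supported in $\{r\le\langle t\rangle/8\}$ and equal to $1$ on $\{r\le\langle t\rangle/16\}$, apply the Gagliardo--Nirenberg inequality $|g|_{L^\infty}\lesssim\|\nabla g\|^{1/2}\|\nabla^2 g\|^{1/2}$ to the truncated function, expand by the product rule, and trade the resulting factors of $\langle t\rangle$ for the weight $\langle t-r\rangle$ using $\langle t-r\rangle\sim\langle t\rangle$ on the support. The only cosmetic slip is the intermediate inequality ``$\langle t-r\rangle\ge\langle t\rangle-\langle t\rangle/8$'', which conflates $\langle\cdot\rangle$ with its argument; the intended comparability $\langle t-r\rangle\gtrsim\langle t\rangle$ on $\{r\le\langle t\rangle/8\}$ is of course valid.
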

\begin{proof}
Let $\varphi\in C_0^\infty$, satisfy $\varphi(s)=1$ for $s\leq 1$, $\varphi(s)=0$ for $s\geq 2$.
we have by Sobolev embedding $H^2(\mathbb{R}^3)\subset L^\infty(\mathbb{R}^3)$,
$|f|_{L^\infty(\mathbb{R}^3)}\lesssim \|\nabla f\|^{1/2}\|\nabla^2 f\|^{1/2}$,
\begin{equation}
\begin{split}
\langle t\rangle|f(x)|&= \langle t\rangle \varphi(r/\frac{\langle t \rangle}{16})|f(x)|\\
&\lesssim \langle t\rangle \|\nabla(\varphi(r/\frac{\langle t \rangle}{16})f)\|_{L^2(r\leq \frac{\langle t \rangle}{8})}+\langle t\rangle \|\nabla^2(\varphi(r/\frac{\langle t \rangle}{16})f)\|_{L^2(r\leq \frac{\langle t \rangle}{8})}\\
&\lesssim \|f\|_{L^2(r\leq \frac{\langle t \rangle}{8})}+\langle t\rangle\|\nabla f\|_{L^2(r\leq \frac{\langle t \rangle}{8})}+\langle t\rangle\|\nabla^2 f\|_{L^2(r\leq \frac{\langle t \rangle}{8})}\\
&\lesssim \|f\|_{L^2(r\leq \frac{\langle t \rangle}{8})}+\|\langle t-r\rangle\nabla f\|_{L^2(r\leq \frac{\langle t \rangle}{8})}+\|\langle t-r\rangle\nabla^2 f\|_{L^2(r\leq \frac{\langle t \rangle}{8})}.
\end{split}
\end{equation}
\end{proof}
\begin{rem}
For any $\alpha$, we have
\begin{equation}\label{DecayT}
\langle t\rangle |\Gamma^\alpha f|_{L^\infty(r\leq \frac{\langle t \rangle}{16})}\lesssim E_{|\alpha|}^{1/2}+X_{|\alpha|+2}^{1/2}.
\end{equation}
\end{rem}

The following result is a consequence of Corollary 8.1 in \cite{Kessenich_2009},
which states that the special quantities $w\Gamma^\alpha G$, $w\Gamma^\alpha v$ have $(1+t)^{-3/2}$
time decay outside the light cone.
\begin{lemma}\label{Decayr3/2}
Suppose $\nabla\cdot v=0$, $\nabla\cdot G^T=0$, then we have
\begin{equation}
\begin{split}
&r^{3/2}|w\cdot v|\lesssim\sum_{|\alpha|\leq 2}\|\Gamma^{\alpha}v\|,\\
&r^{3/2}|w_i\cdot G_{ij}|\lesssim\sum_{|\alpha|\leq 2}\|\Gamma^{\alpha}G\|, \forall j=1, 2, \cdots, n.
\end{split}
\end{equation}
\end{lemma}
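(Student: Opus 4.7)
The plan is to combine the divergence-free constraints with the second inequality of Lemma~\ref{Ginequality}; the constraints buy an extra $r^{-1/2}$ factor beyond the general pointwise bound $r|u|\lesssim(\cdots)^{1/2}(\cdots)^{1/2}$.

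First I would derive the key pointwise identities. Using the decomposition $\partial_j = w_j\partial_r - \frac{1}{r}(w\wedge\Omega)_j$ from \eqref{Decompose} together with $\partial_r w = 0$, applying $\nabla\cdot v = \partial_j v_j = 0$ gives
\begin{equation*}
\partial_r(w\cdot v) \;=\; \frac{1}{r}(w\wedge\Omega)\cdot v .
\end{equation*}
Similarly, $\nabla\cdot G^T = \partial_j G_{ij}=0$ (for each fixed $i$) yields $\partial_r(w_j G_{ij}) = \frac{1}{r}(w\wedge\Omega)_j G_{ij}$. This is the divergence-free gain: the radial components $w\cdot v$ and $w_j G_{ij}$ have radial derivatives carrying an extra $1/r$ factor compared with a generic derivative of $v$ or $G$.

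Next I would apply the second inequality of Lemma~\ref{Ginequality} to the scalar $u = w\cdot v$ (the same proof is valid for a scalar function since $\widetilde{\Omega}$ reduces to $\Omega$ on scalars):
\begin{equation*}
r|u(x)| \;\lesssim\; \sum_{|\alpha|\leq 1}\|\partial_r\widetilde{\Omega}^\alpha u\|_{L^2(|y|\geq r)}^{1/2}\cdot \sum_{|\beta|\leq 2}\|\widetilde{\Omega}^\beta u\|_{L^2(|y|\geq r)}^{1/2}.
\end{equation*}
The second factor is controlled by $\sum_{|\gamma|\leq|\beta|}\|\widetilde{\Omega}^\gamma v\|$ via a Leibniz expansion, since $\widetilde{\Omega}^k w$ is pointwise bounded for every $k$. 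For the first factor I would commute $\widetilde{\Omega}^\alpha$ past $\partial_r$ (using $[\partial_r,\Omega]=0$ and $\partial_r w = 0$) and substitute the identity of step~one to write $\partial_r\widetilde{\Omega}^\alpha u = \frac{1}{r}\widetilde{\Omega}^\alpha[(w\wedge\Omega)\cdot v]$ modulo bounded commutator terms. On the region $|y|\geq r$ the weight satisfies $1/|y|\leq 1/r$, so
\begin{equation*}
\|\partial_r\widetilde{\Omega}^\alpha u\|_{L^2(|y|\geq r)} \;\lesssim\; \frac{1}{r}\sum_{|\gamma|\leq|\alpha|+1}\|\widetilde{\Omega}^\gamma v\|.
\end{equation*}

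Combining the two bounds gives $r|u|\lesssim r^{-1/2}\sum_{|\gamma|\leq 2}\|\widetilde{\Omega}^\gamma v\|$, which rearranges to the claimed estimate $r^{3/2}|w\cdot v|\lesssim \sum_{|\alpha|\leq 2}\|\Gamma^\alpha v\|$. The estimate for $w_i G_{ij}$ follows identically, applied to each scalar $w_j G_{ij}$ (fixed $i$) using $\nabla\cdot G^T=0$. The main bookkeeping obstacle I anticipate is keeping track of the commutators between $\widetilde{\Omega}$ and the angular factors $w$, $w\wedge\Omega$: because $\widetilde{\Omega}$ differs from $\Omega$ by a bounded constant matrix action $V$, the Leibniz expansion of $\widetilde{\Omega}^\alpha[(w\wedge\Omega)\cdot v]$ generates lower-order terms involving $\widetilde{\Omega}^k w$ and $V$, and one has to verify these are all pointwise bounded so they can be absorbed into the stated sum on the right-hand side.
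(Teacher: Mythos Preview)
Your argument is correct and very close in spirit to the paper's, but there is a small difference in packaging worth recording. The paper applies the \emph{first} inequality of Lemma~\ref{Ginequality} to the function $u = r\,w\cdot v = x\cdot v$ (which is smooth through the origin), and then uses the decomposition~\eqref{Decompose} together with $\nabla\cdot v=0$ to show that $\|r\partial_r(\widetilde\Omega^\alpha v\cdot w)\|$ is bounded by $\sum_{|\gamma|\le 2}\|\widetilde\Omega^\gamma v\|$; this yields the $r^{3/2}$ bound directly for $r\ge 1$, with the case $r\le 1$ handled separately by the Sobolev embedding $H^2\subset L^\infty$. You instead apply the \emph{second} inequality of Lemma~\ref{Ginequality} to $u=w\cdot v$ and extract the extra $r^{-1/2}$ from the radial-derivative factor. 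Both routes rest on the identical divergence-free identity, and the commutator bookkeeping you flag as an obstacle is in fact trivial: one has the clean relation $\widetilde\Omega^\alpha(w\cdot v)=w\cdot\widetilde\Omega^\alpha v$, so no lower-order debris appears. Your version nominally avoids the $r\le 1$ split, though strictly speaking $w\cdot v$ is not continuous at the origin, so a small-$r$ remark (or density argument) is still implicit.

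One genuine correction: your indices for the $G$ case are reversed. With the paper's convention $(\nabla\cdot F)_i=\partial_jF_{ij}$, the constraint $\nabla\cdot G^T=0$ reads $\partial_i G_{ij}=0$ for each fixed $j$ (equivalently $\partial_m G_{mj}=0$), not $\partial_j G_{ij}=0$ for fixed $i$. Correspondingly the quantity in the lemma is $w_iG_{ij}$ (fixed $j$), and the relevant identity is $\partial_r(w_iG_{ij})=\frac{1}{r}(w\wedge\Omega)_iG_{ij}$. With this swap your argument goes through verbatim.
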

\begin{proof}
For completeness, let us give a simpler proof. It suffices to prove the first one, since the second one can be carried out
in exactly the same way.

For $r\leq 1$, it is an immediate consequence of the Sobolev embedding
\begin{equation*}
H^2(\mathbb{R}^3)\subset L^\infty(\mathbb{R}^3).
\end{equation*}

While for $r\geq 1$, using Lemma \ref{Ginequality}, and the decomposition of $\nabla$,
we have
\begin{equation}
\begin{split}
r^{\frac{1}{2}}|rv\cdot w|&\lesssim\sum_{|\alpha|\leq 1}\|\nabla \widetilde{\Omega}^\alpha(rv\cdot w)\|=
\sum_{|\alpha|\leq 1}\|\nabla (r\widetilde{\Omega}^\alpha v\cdot w)\|\\
&\lesssim\sum_{|\alpha|\leq 1}\|\partial_r (r\widetilde{\Omega}^\alpha v\cdot w)\|+
\sum_{|\alpha|\leq 1}\|\frac{w\wedge \Omega}{r} (r\widetilde{\Omega}^\alpha v\cdot w)\|\\
&\lesssim\sum_{|\alpha|\leq 2}\|\widetilde{\Omega}^\alpha v\|+
\sum_{|\alpha|\leq 1}\|r\partial_r\widetilde{\Omega}^\alpha v\cdot w\|\\
&\lesssim\sum_{|\alpha|\leq 2}\|\widetilde{\Omega}^\alpha v\|+
\sum_{|\alpha|\leq 1}(\|r\partial_i\widetilde{\Omega}^\alpha v_i\|+
\|r\frac{(w\wedge \Omega)_i}{r}\widetilde{\Omega}^\alpha v_i\|)\\
&\lesssim\sum_{|\alpha|\leq 2}\|\Gamma^\alpha v\|.
\end{split}
\end{equation}
\end{proof}

\section{Bound for the Pressure Gradient}
The following Lemma shows that the pressure gradient may be
treated as a nonlinear term. The first estimate appeared in \cite{SiderisB_2007},
and the second estimate appeared in \cite{LZT_2012}, it is a novel refinement which saves one derivative over
the first bound and which allows us to exploit the null structure.
This is essential in Section 9 when we estimate the ghost weighted energy.
\begin{lemma}\label{Pressure}
Let $(v, F)=(v, I+G)$, $(v, G) \in H^k_\Gamma$, solve the equation \eqref{EquationM},
%and satisfy the constraint \eqref{Consrtaint}.
then we have
\begin{equation}
\begin{split}
&\|\nabla\Gamma^\alpha p\| \lesssim \|f_\alpha\|,\\
\end{split}
\end{equation}
\begin{equation}
\|\nabla\Gamma^\alpha p\| \lesssim \sum_{\beta+\gamma=\alpha, |\beta|\leq|\gamma|}
\|\partial_j\Gamma^\beta v_i\Gamma^\gamma v_j-\partial_j\Gamma^\beta G_{ik}\Gamma^\gamma G_{jk}\|,
\end{equation}
for all $|\alpha|\leq k-1$.
\end{lemma}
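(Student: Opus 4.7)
The plan is to derive a Poisson equation for $\Gamma^\alpha p$ from the commuted momentum equation in \eqref{Equation} and then apply $L^2$-boundedness of Riesz-type operators.

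Taking $\nabla\cdot$ of the first equation in \eqref{Equation}, the commuted constraints $\nabla\cdot\Gamma^\alpha v=0$ and $\nabla\cdot\Gamma^\alpha G^T=0$ eliminate the left-hand side (the latter, after commuting partial derivatives and relabeling, also forces $\partial_i\partial_j\Gamma^\alpha G_{ij}=0$), yielding $-\Delta\Gamma^\alpha p=-\nabla\cdot f_\alpha$. Hence $\nabla\Gamma^\alpha p=\nabla(-\Delta)^{-1}\nabla\cdot f_\alpha$, and since $\nabla(-\Delta)^{-1}\nabla\cdot$ is a composition of Riesz transforms and bounded on $L^2(\mathbb{R}^3)$, the first estimate $\|\nabla\Gamma^\alpha p\|\lesssim\|f_\alpha\|$ follows at once.

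For the refined bound the strategy is to rewrite $\nabla\cdot f_\alpha$ as a \emph{single} divergence of a bilinear expression that still carries one derivative, so that inverting $\Delta$ costs only one derivative rather than two. Using $\partial_i\Gamma^\gamma v_i=0$ and the column-wise divergence-free identity for $\Gamma^\gamma G$ (i.e.\ $\partial_i\Gamma^\gamma G_{ij}=0$, coming from $\nabla\cdot\Gamma^\gamma G^T=0$), a direct computation yields
\begin{equation*}
\nabla\cdot(\Gamma^\beta v\cdot\nabla\Gamma^\gamma v)=\partial_i\bigl(\partial_j\Gamma^\beta v_i\,\Gamma^\gamma v_j\bigr),\qquad
\nabla\cdot\nabla\cdot(\Gamma^\beta G\,\Gamma^\gamma G^T)=\partial_i\bigl(\partial_j\Gamma^\beta G_{ik}\,\Gamma^\gamma G_{jk}\bigr).
\end{equation*}
Summing over $\beta+\gamma=\alpha$ produces $-\Delta\Gamma^\alpha p=\partial_i Y_i$, where $Y_i$ is precisely the bracketed quantity in the claim. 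Since $\nabla(-\Delta)^{-1}\partial_i$ has a bounded (zero-order) symbol, I obtain $\|\nabla\Gamma^\alpha p\|\lesssim\|Y\|\leq\sum_{\beta+\gamma=\alpha}\|\partial_j\Gamma^\beta v_i\Gamma^\gamma v_j-\partial_j\Gamma^\beta G_{ik}\Gamma^\gamma G_{jk}\|$.

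To enforce the constraint $|\beta|\leq|\gamma|$, I use one further symmetry. Applying the divergence-free identities once more shows that the operator effectively acts as $\nabla(-\Delta)^{-1}\partial_i\partial_j$ on the symmetric tensor $\Gamma^\beta v_i\Gamma^\gamma v_j-\Gamma^\beta G_{ik}\Gamma^\gamma G_{jk}$; since $\partial_i\partial_j$ is symmetric in $i,j$, renaming dummy indices shows that the contributions of $(\beta,\gamma)$ and $(\gamma,\beta)$ coincide. I can therefore replace each decomposition by its ordering with $|\beta|\leq|\gamma|$, and the triangle inequality gives the stated bound. The main conceptual step is the reorganisation in the third paragraph: recognising that because every commuted velocity and every column of every commuted $G$ remains divergence-free, one can peel a full divergence off $\nabla\cdot f_\alpha$, which is exactly the null-type cancellation that saves one derivative, in the spirit of \cite{LZT_2012}.
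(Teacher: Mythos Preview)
Your argument is correct and follows essentially the same route as the paper: derive $\Delta\Gamma^\alpha p=\nabla\cdot f_\alpha$ from the commuted constraints, use $L^2$-boundedness of $\nabla\Delta^{-1}\nabla\cdot$ for the first bound, and for the refined bound exploit the divergence-free structure of $\Gamma^\beta v$ and $\Gamma^\beta G^T$ to write $\nabla\cdot f_\alpha$ as a single divergence before applying the zero-order multiplier. The only cosmetic difference is that the paper splits the sum into $|\beta|\le|\gamma|$ and $|\beta|>|\gamma|$ \emph{before} peeling the derivative (placing it on the lower-order factor in each case), whereas you first peel uniformly and then invoke the $i\leftrightarrow j$ symmetry of $\partial_i\partial_j$ to fold $(\beta,\gamma)$ onto $(\gamma,\beta)$; these are equivalent manipulations.
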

\begin{proof}
Applying the divergence operator to the first equation of
\eqref{Equation}, we have
\begin{equation}
\Delta \Gamma^\alpha p = \nabla\cdot f_\alpha + \nabla\cdot(\nabla\cdot
\Gamma^\alpha G) - \partial_t\nabla\cdot \Gamma^\alpha v.
\end{equation}
Using the last equation in \eqref{Equation}, one has
\begin{equation}
\Delta \Gamma^\alpha p = \nabla\cdot f_\alpha.
\end{equation}
By the definition of $f_\alpha$ \eqref{Equationfg}, and the last equation in \eqref{Equation}, we have
\begin{equation}
\begin{split}
\nabla\cdot f_\alpha &= -\sum_{\beta+\gamma=\alpha}\partial_i\partial_j(\Gamma^\beta v_i \Gamma^\gamma v_j -\Gamma^\beta G_{ik} \Gamma^\gamma G_{jk})\\
&= -\sum_{\beta+\gamma=\alpha, |\beta|\leq|\gamma|}\partial_i\partial_j(\Gamma^\beta v_i \Gamma^\gamma v_j -\Gamma^\beta G_{ik} \Gamma^\gamma G_{jk})\\
&\ \ \ \ -\sum_{\beta+\gamma=\alpha, |\beta|>|\gamma|}\partial_i\partial_j(\Gamma^\beta v_i \Gamma^\gamma v_j -\Gamma^\beta G_{ik} \Gamma^\gamma G_{jk})\\
&= -\sum_{\beta+\gamma=\alpha, |\beta|\leq|\gamma|}\partial_i(\partial_j\Gamma^\beta v_i \Gamma^\gamma v_j -\partial_j\Gamma^\beta G_{ik} \Gamma^\gamma G_{jk})\\
&\ \ \ \ -\sum_{\beta+\gamma=\alpha, |\beta|>|\gamma|}\partial_j(\Gamma^\beta v_i \partial_i\Gamma^\gamma v_j -\Gamma^\beta G_{ik} \partial_i\Gamma^\gamma G_{jk}).
\end{split}
\end{equation}
The result now follows since
\begin{equation}
\nabla \Gamma^\alpha p=\Delta^{-1}\nabla(\nabla\cdot f_\alpha),
\end{equation}
and since $\Delta^{-1}\nabla\otimes \nabla$ is bounded in $L^2$.
\end{proof}

\section{Weighted $L^2$ Estimate}
In this section, we show that the weighted norm is controlled by the energy, for small solutions.
\begin{lemma}\label{Lemma6.1}
Suppose that $(v, F)=(v, I+G)$, $(v, G)\in H_\Gamma^k, k\geq 4$, solves \eqref{EquationM}.
Then
\begin{equation}
\|N_k(t)\|\lesssim E_k(t) + E_k(t)^{1/2} X_k(t)^{1/2},
\end{equation}
there
\begin{equation}
N_k=\sum_{|\alpha|\leq k-1}\big[ t|f_\alpha|+ t|g_\alpha|+ (t+r)|h_\alpha|+t|\nabla \Gamma^\alpha p|\big].
\end{equation}
\end{lemma}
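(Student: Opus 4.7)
The plan is to reduce the pressure contribution to the bound on $f_\alpha$ already supplied by Lemma \ref{Pressure}, and then to estimate every remaining nonlinear quantity uniformly by splitting $\mathbb{R}^3$ at the hypersurface $r=t/2$. Since $t$ is a scalar independent of the spatial variable, Lemma \ref{Pressure} immediately gives $\|t\nabla \Gamma^\alpha p\|=t\|\nabla \Gamma^\alpha p\|\lesssim \|t f_\alpha\|$ for every $|\alpha|\leq k-1$, so the task reduces to bounding $\|t f_\alpha\|+\|t g_\alpha\|+\|(t+r) h_\alpha\|$.

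Inspecting the formulas \eqref{Equationfg} and \eqref{ConstraintG} (expanding the divergence in $\nabla\cdot(\Gamma^\beta G\,\Gamma^\gamma G^T)$ by Leibniz), each of $f_\alpha$, $g_\alpha$, $h_\alpha$ is a finite sum of quadratic terms of the schematic form $\Gamma^\beta U\cdot \nabla \Gamma^\gamma U$ with $|\beta|+|\gamma|\leq k-1$. By symmetry I may assume $|\beta|\leq |\gamma|$, hence $|\beta|\leq (k-1)/2$, and in particular $|\beta|+2\leq k$ since $k\geq 4$. It thus suffices to bound $\|w(t,r)\,\Gamma^\beta U\cdot \nabla \Gamma^\gamma U\|$ for weights $w(t,r)\in \{t,\,t+r\}$.

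On the exterior region $\{r\geq t/2\}$ one has $t+r\leq 3r\leq 3\langle r\rangle$, so $w(t,r)\lesssim \langle r\rangle$. The pointwise estimate \eqref{Ginequality2} absorbs this spatial weight into the low-order factor, $\langle r\rangle|\Gamma^\beta U(t,x)|\lesssim E_k^{1/2}(t)$, and pairing with $\|\nabla\Gamma^\gamma U\|\lesssim E_k^{1/2}(t)$ bounds the contribution by $E_k(t)$. On the interior region $\{r\leq t/2\}$ one has $w(t,r)\lesssim t\lesssim \langle t-r\rangle$, so the weight is absorbed into the definition of $X_k$. Placing $\Gamma^\beta U$ in $L^\infty$ by the Sobolev embedding $\|\Gamma^\beta U\|_{L^\infty}\lesssim \sum_{|\sigma|\leq 2}\|\Gamma^\sigma \Gamma^\beta U\|\lesssim E_k^{1/2}(t)$ (permissible because $|\beta|+2\leq k$) and pairing with $\|\langle t-r\rangle\nabla \Gamma^\gamma U\|\lesssim X_k^{1/2}(t)$ yields a contribution of order $E_k^{1/2}(t)X_k^{1/2}(t)$.

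Summing all such contributions over $|\alpha|\leq k-1$ and over the three quantities $f_\alpha$, $g_\alpha$, $h_\alpha$ gives the desired bound $\|N_k(t)\|\lesssim E_k(t)+E_k^{1/2}(t)X_k^{1/2}(t)$. There is no genuine obstacle here, and this step is essentially bookkeeping; the key move is that the cut at $r=t/2$ lets us trade the prefactor $t$ (or $t+r$) either for the spatial weight $\langle r\rangle$, absorbed by Sideris' pointwise estimate \eqref{Ginequality2}, or for the cone weight $\langle t-r\rangle$, absorbed by the definition of $X_k$.
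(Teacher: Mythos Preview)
Your overall strategy---reduce the pressure to $f_\alpha$ via Lemma~\ref{Pressure} and split at the light cone---is exactly what the paper does, and your exterior argument using the first line of \eqref{Ginequality2} is a clean simplification of the paper's route through the radial/angular decomposition of $\nabla$. However, the step ``by symmetry I may assume $|\beta|\leq |\gamma|$'' is not justified: the nonlinear terms have the form $\Gamma^\beta U\cdot\nabla\Gamma^\gamma U$ with the derivative sitting on a \emph{specific} factor, so there is no symmetry that lets you swap $\beta$ and $\gamma$. Concretely, the term $\Gamma^\alpha v\cdot\nabla v$ (i.e.\ $|\beta|=k-1$, $|\gamma|=0$) appears in $f_\alpha$, and your interior argument places $\Gamma^\beta U=\Gamma^{k-1}U$ in $L^\infty$ by Sobolev, which requires $|\beta|+2\leq k$ and fails here.

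The fix is short: when $|\gamma|<|\beta|$ you have $|\gamma|+3\leq k$ (for $k\geq 4$), so in the interior region put $\langle t-r\rangle\nabla\Gamma^\gamma U$ in $L^\infty$ using the second line of \eqref{Ginequality2}, and pair with $\|\Gamma^\beta U\|\lesssim E_k^{1/2}$; the exterior case is handled the same way. The paper treats both cases explicitly (see the two displays after ``either $|\beta|\leq k'$ or $|\gamma|\leq k'$''), and for the second case in the interior actually invokes the refined Sobolev bound \eqref{DecayT}, which absorbs the full weight $\langle t\rangle$ into the low-order differentiated factor at the cost of both $E$ and $X$. Either route closes the gap; you just need to say which one you are using instead of appealing to a nonexistent symmetry.
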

\begin{proof}
By Lemma \ref{Pressure}, we have
\begin{equation}
\begin{split}
\|N_k(t)\|&\leq \sum_{|\alpha|\leq k-1}\big[ t\|\nabla \Gamma^\alpha p\|+t\|f_\alpha\|+ t\|g_\alpha\|+
\|(t+r)h_\alpha\|\big]\\
&\lesssim \sum_{|\alpha|\leq k-1}\big[t\|f_\alpha\|+ t\|g_\alpha\|+
\|(t+r)h_\alpha\|\big].
\end{split}
\end{equation}

To estimate these terms, we shall consider the following two cases: $r\leq \frac{\langle t \rangle}{16}$ and $r\geq \frac{\langle t \rangle}{16}$.

{\bf Estimates of nonlinearities for $r\leq \frac{\langle t \rangle}{16}$}. Examining the definitions, we find that
\begin{equation}
\begin{split}
&\ \ \ \ \sum_{|\alpha|\leq k-1}\big[t\|f_\alpha\|_{L^2(r\leq \langle t \rangle/16)}+ t\|g_\alpha\|_{L^2(r\leq \langle t \rangle/16)}+
\|(t+r)h_\alpha\|_{L^2(r\leq \langle t \rangle/16)}\big]\\
&\lesssim  \sum_{\beta+\gamma=\alpha, |\alpha|\leq k-1}\langle t\rangle\|(|\Gamma^\beta v|+ |\Gamma^\beta G|)
(|\nabla \Gamma^\gamma v|+|\nabla \Gamma^\gamma G|)\|_{L^2(r\leq \langle t \rangle/16)}.
\end{split}
\end{equation}
To simplify the notation a bit, we shall write
\begin{equation}
|(\Gamma^k v, \Gamma^k G)|=\sum_{|\alpha|\leq k}[|\Gamma^\alpha v|+|\Gamma^\alpha G|].
\end{equation}
We make use of the fact that since $\beta+\gamma=\alpha$, $|\alpha|\leq k-1$ and $k\geq 4$,
either $|\beta|\leq k^{\prime}$ or $|\gamma| \leq k^{\prime}$, with $k^{\prime}=[\frac{k-1}{2}]$,
we have $k^\prime+3\leq k$. Here, $[s]$ stands for the largest integer not exceeding $s$.
Note that $\langle t\rangle\sim \langle t-r\rangle$ in the region $r\leq \frac{\langle t \rangle}{16}$, thus, using \eqref{DecayT}, we have
\begin{equation}
\begin{split}
&\ \ \ \ \langle t\rangle\|(|\Gamma^\beta v|+ |\Gamma^\beta G|)
(|\nabla \Gamma^\gamma v|+|\nabla \Gamma^\gamma G|)\|_{L^2(r\leq \langle t \rangle/16)}\\
&\lesssim |(|\Gamma^{k^{\prime}} v, \Gamma^{k^{\prime}} G)|_{L^\infty}\|
\langle t-r\rangle(\nabla \Gamma^{k-1} v, \nabla \Gamma^{k-1} G|)\|_{L^2(r\leq \langle t \rangle/16)}\\
&\ \ \ \ +\|(|\Gamma^{k-1} v, \Gamma^{k-1} G)\|_{L^2}|\langle t\rangle
(\nabla \Gamma^{k^{\prime}} v, \nabla \Gamma^{k^{\prime}} G|)|_{L^\infty(r\leq \langle t\rangle/16)}\\
&\lesssim E_{k^{\prime}+2}^{1/2}(t)X_k^{1/2}(t)+E_{k-1}^{1/2}(t)(E_{k^{\prime}+1}^{1/2}(t)+X_{k^{\prime}+3}^{1/2}(t))\\
&\lesssim E_k^{1/2}(t)X_k^{1/2}(t)+E_k(t).
\end{split}
\end{equation}

{\bf Estimates of nonlinearities for $r\geq \frac{\langle t \rangle}{16}$}. By the decomposition of
$\nabla$ \eqref{Decompose}, we have
\begin{equation}
\begin{split}
|f_\alpha|+|g_\alpha|+|h_\alpha| &\lesssim \sum_{\beta+\gamma=\alpha, |\alpha|\leq k-1}
(|\Gamma^\beta v|+|\Gamma^\beta G|))(|\partial_r\Gamma^\gamma v|+|\partial_r\Gamma^\gamma G|)\\
&+\sum_{\beta+\gamma=\alpha, |\alpha|\leq k-1}\frac{1}{r}(|\Gamma^\beta v|+|\Gamma^\beta G|)
(|\Omega\Gamma^\gamma v|+|\Omega\Gamma^\gamma G|).
\end{split}
\end{equation}
Thus, using the second inequality of \eqref{Ginequality}, we have
\begin{equation}
\begin{split}
&\ \ \ \ \sum_{|\alpha|\leq k-1}\big[t\|f_\alpha\|_{L^2(r\geq \langle t \rangle/16)}+ t\|g_\alpha\|_{L^2(r\geq \langle t \rangle/16)}+
\|(t+r)h_\alpha\|_{L^2(r\geq \langle t \rangle/16)}\big]\\
&\lesssim \sum_{\beta+\gamma=\alpha, |\alpha|\leq k-1}
\|r(|\Gamma^\beta v|+|\Gamma^\beta G|))(|\partial_r\Gamma^\gamma v|+|\partial_r\Gamma^\gamma G|)\|_{L^2(r\geq \langle t \rangle/16)}\\
&\ \ \ \ +\sum_{\beta+\gamma=\alpha, |\alpha|\leq k-1}\|(|\Gamma^\beta v|+|\Gamma^\beta G|)
(|\Omega\Gamma^\gamma v|+|\Omega\Gamma^\gamma G|)\|_{L^2(r\geq \langle t \rangle/16)},\\
&\lesssim E_k(t).
\end{split}
\end{equation}
Then Lemma \ref{Lemma6.1} follows by collecting the estimates above.
\end{proof}
\begin{lemma}\label{GequationG}
Suppose that $(v, F)=(v, I+G)$, $(v, G)\in H_\Gamma^k$, solves \eqref{EquationM}.
Then for all $|\alpha|\leq k-1$, we have
\begin{equation}
\begin{split}
\|(t-r)\nabla\Gamma^\alpha G\|^2\lesssim \|\Gamma^\alpha G\|^2+\|(t-r)\nabla\cdot \Gamma^\alpha G\|^2+Q_\alpha,
\end{split}
\end{equation}
here
\begin{equation}
\begin{split}
Q_\alpha=\sum_{\beta+\gamma=\alpha, |\alpha|\leq k-1}\int(t-r)^2\partial_j\Gamma^{\alpha}G_{ik}(\Gamma^{\beta}G_{lk}\partial_l \Gamma^{\gamma} G_{ij}-\Gamma^{\beta}G_{lj}\partial_l \Gamma^{\gamma}G_{ik}),
\end{split}
\end{equation}
if in addition $k\geq 4$, then we have
\begin{equation}
Q_\alpha\lesssim X_kE_k^{1/2}.
\end{equation}
\end{lemma}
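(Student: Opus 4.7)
The plan is to derive the first inequality from the curl-type constraint \eqref{ConstraintG} via a double integration by parts under the weight $(t-r)^2$, and then to bound $Q_\alpha$ by the standard dyadic split of derivatives. First I would start from the pointwise identity $|\nabla\Gamma^\alpha G|^2 = \partial_k\Gamma^\alpha G_{ij}\,\partial_k\Gamma^\alpha G_{ij}$ and use \eqref{ConstraintG} in the form $\partial_k\Gamma^\alpha G_{ij} = \partial_j\Gamma^\alpha G_{ik} + (h_\alpha)_{ijk}$ to replace one of the two factors. Multiplying by $(t-r)^2$ and integrating splits the weighted norm into a ``symmetric'' piece $\int (t-r)^2\,\partial_k\Gamma^\alpha G_{ij}\,\partial_j\Gamma^\alpha G_{ik}\,dx$ and a commutator piece $\int (t-r)^2\,\partial_k\Gamma^\alpha G_{ij}\,(h_\alpha)_{ijk}\,dx$. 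Swapping the dummy indices $j\leftrightarrow k$ in the commutator and inserting the explicit formula for $h_\alpha$ from \eqref{ConstraintG} reproduces $Q_\alpha$ exactly.

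For the symmetric piece I would integrate by parts twice: first transfer $\partial_j$ off $\partial_j\Gamma^\alpha G_{ik}$, and then transfer $\partial_k$ off the resulting second-order term $\partial_j\partial_k\Gamma^\alpha G_{ij} = \partial_k(\nabla\cdot\Gamma^\alpha G)_i$. The fully transferred contribution collapses to $\int(t-r)^2|\nabla\cdot\Gamma^\alpha G|^2\,dx$, while every remaining term picks up a factor $\nabla(t-r)^2 = -2(t-r)x/r$ of size $2|t-r|$. By Cauchy--Schwarz each such term is controlled by $\epsilon\|(t-r)\nabla\Gamma^\alpha G\|^2 + C_\epsilon\|\Gamma^\alpha G\|^2$, and choosing $\epsilon$ small and absorbing gives the first inequality.

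For the $Q_\alpha$ bound with $k\ge 4$ I would apply Cauchy--Schwarz summand by summand to obtain $|Q_\alpha|\lesssim \sum\|(t-r)\nabla\Gamma^\alpha G\|\cdot\|(t-r)\Gamma^\beta G\,\nabla\Gamma^\gamma G\|$ with the first factor bounded by $X_k^{1/2}$, and then estimate the second factor by placing the lower-order field in $L^\infty$. When $|\beta|\le \lfloor(k-1)/2\rfloor$, so $|\beta|+2\le k$, the Sobolev embedding $H^2(\mathbb{R}^3)\hookrightarrow L^\infty(\mathbb{R}^3)$ gives $\|\Gamma^\beta G\|_{L^\infty}\lesssim E_k^{1/2}$, and $\|(t-r)\nabla\Gamma^\gamma G\|\le X_k^{1/2}$ since $|\gamma|\le k-1$. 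In the complementary case $|\gamma|<|\beta|$ one has $|\gamma|+3\le k$, so the pointwise bound $\langle r\rangle\langle t-r\rangle|\nabla\Gamma^\gamma G|\lesssim X_k^{1/2}$ from \eqref{Ginequality2}, combined with the trivial estimate $\|\Gamma^\beta G/\langle r\rangle\|_{L^2}\le \|\Gamma^\beta G\|\le E_k^{1/2}$, yields the same product $X_k^{1/2}E_k^{1/2}$. Summing gives $|Q_\alpha|\lesssim X_k E_k^{1/2}$.

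The main technical obstacle will be the bookkeeping of the double integration by parts: one must verify that the two successive IBPs land on the correct index pair so as to produce $|(\nabla\cdot\Gamma^\alpha G)|^2$ (rather than $|\nabla\cdot\Gamma^\alpha G^T|^2$, which would vanish by the preserved constraint in \eqref{Equation} and leave nothing useful on the right-hand side), and that each of the four stray contributions involving $\nabla(t-r)^2$ carries enough powers of $|t-r|$ to be absorbed by the weighted left-hand side.
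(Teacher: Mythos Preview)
Your proposal is correct and follows essentially the same approach as the paper's proof: both use the curl constraint \eqref{ConstraintG} to turn $\|(t-r)\nabla\Gamma^\alpha G\|^2$ into the symmetric cross-term $\int(t-r)^2\partial_j\Gamma^\alpha G_{ik}\partial_k\Gamma^\alpha G_{ij}$ plus $Q_\alpha$, then perform the same double integration by parts to produce $\|(t-r)\nabla\cdot\Gamma^\alpha G\|^2$ and absorb the weight-commutator terms via Young's inequality. Your treatment of $Q_\alpha$ is also the paper's dyadic split (Sobolev embedding on the low factor versus the pointwise bound \eqref{Ginequality2} on the low-derivative gradient), with only a cosmetic difference in how the $\langle r\rangle$ weight is distributed in the second case.
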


\begin{proof}
Using integration by parts and \eqref{ConstraintG0}, combining with Young's inequality, we have
\begin{equation}
\begin{split}
\|(t-r)\nabla G\|^2&=\int(t-r)^2\partial_jG_{ik}\partial_jG_{ik}\\
&=\int(t-r)^2\partial_jG_{ik}\partial_kG_{ij}+Q\\
&=\int 2(t-r)w_jG_{ik}\partial_kG_{ij}-(t-r)^2G_{ik}\partial_j\partial_kG_{ij}+Q\\
&=\int 2(t-r)w_jG_{ik}\partial_kG_{ij}-2(t-r)w_kG_{ik}\partial_jG_{ij}\\
&\ \ \ \ +(t-r)^2\partial_kG_{ik}\partial_jG_{ij}+Q\\
&\leq 4\|G\|^2+\frac{3}{2}\|(t-r)\nabla\cdot G\|^2+\frac{1}{2}\|(t-r)\nabla G\|^2+Q,
\end{split}
\end{equation}
hence
\begin{equation}
\begin{split}
\|(t-r)\nabla G\|^2
\lesssim \|G\|^2+\|(t-r)\nabla\cdot G\|^2+Q,
\end{split}
\end{equation}
here
\begin{equation}
\begin{split}
Q=\int(t-r)^2\partial_jG_{ik}(G_{lk}\partial_l G_{ij}-G_{lj}\partial_l G_{ik}).
\end{split}
\end{equation}
Lemma \ref{GequationG} follows by applying this inequality to $\Gamma^\alpha G$. On the other hand,
since $\beta+\gamma=\alpha$, $|\alpha|\leq k-1$ , $\gamma\neq \alpha$ and $k\geq 4$,
either $|\beta|\leq k^{\prime}$ or $|\gamma| \leq k^{\prime}$, with $k^{\prime}=[\frac{k-1}{2}]$.
Hence, by Sobolev embedding $H^2(\mathbb{R}^3)\subset L^\infty(\mathbb{R}^3)$, Lemma \ref{Ginequality2}, we have

\begin{equation}
\begin{split}
&\ \ \ \ Q_\alpha=\sum_{\beta+\gamma=\alpha,|\alpha|\leq k-1}\int(t-r)^2\partial_j\Gamma^{\alpha}G_{ik}(\Gamma^{\beta}G_{lk}\partial_l \Gamma^{\gamma} G_{ij}-\Gamma^{\beta}G_{lj}\partial_l \Gamma^{\gamma}G_{ik})\\
&\lesssim \int\langle t-r\rangle^2|\nabla \Gamma^{k-1}G|(|\Gamma^{k^\prime}G\nabla \Gamma^{k-1}G|+|\Gamma^{k-1}G\nabla \Gamma^{k^\prime}G|)\\
&\lesssim \|\langle t-r\rangle\nabla \Gamma^{k-1}G\|^2|\Gamma^{k^\prime}G|_{L^\infty}
+\|\langle t-r\rangle\nabla \Gamma^{k-1}G\|\|\Gamma^{k-1}G\||\langle t-r\rangle\nabla \Gamma^{k^\prime}G|_{L^\infty}\\
&\lesssim X_kE_{k^\prime+2}^{1/2}+X_k^{1/2}E_{k-1}^{1/2}X_{k^\prime+3}^{1/2}\lesssim X_kE_k^{1/2}.
\end{split}
\end{equation}
\end{proof}
\begin{lemma}
Suppose that $(v, F)=(v, I+G)$, $(v, G)\in H_\Gamma^k$, $k\geq 1$, solves \eqref{EquationM}.
Define
\begin{equation}
L_k=\sum_{|\alpha|\leq k}\big[|\Gamma^\alpha v|+|\Gamma^\alpha G|\big],
\end{equation}
Then for all $|\alpha| \leq k-1$,
\begin{equation}\label{Equation1}
r|\nabla\cdot \Gamma^\alpha G\otimes w-\partial_r\Gamma^\alpha G|\lesssim L_k+N_k,
\end{equation}
\begin{equation}\label{Equation2}
r|\nabla\Gamma^\alpha v-\partial_r \Gamma^\alpha v\otimes w|\lesssim L_k,
\end{equation}
\begin{equation}\label{Equation3}
(t\pm r)|\nabla \Gamma^\alpha v\pm \nabla \cdot \Gamma^\alpha G\otimes w|\lesssim L_k+N_k.
\end{equation}
\end{lemma}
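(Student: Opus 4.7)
The plan is to combine the angular decomposition $\partial_j = w_j\partial_r - (w\wedge\Omega)_j/r$ from \eqref{Decompose}, the differential constraint \eqref{ConstraintG}, the evolution equations \eqref{Equation}, and the scaling identity $t\partial_t = S - r\partial_r$. Throughout I abbreviate $v = \Gamma^\alpha v$, $G = \Gamma^\alpha G$, $p = \Gamma^\alpha p$, $f = f_\alpha$, $g = g_\alpha$, $h = h_\alpha$. For \eqref{Equation2}, the decomposition gives immediately
\[
\partial_j v_i - w_j\partial_r v_i = -\tfrac{1}{r}(w\wedge\Omega)_j\, v_i,
\]
so $r|\nabla v - \partial_r v\otimes w| \lesssim |\Omega v|$. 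Since $\widetilde{\Omega} v = \Omega v + Vv$ for vector-valued $v$, one has $|\Omega v| \lesssim |\widetilde{\Omega} v| + |v| \lesssim L_k$ (the $\widetilde{\Omega}\Gamma^\alpha v$ are of the form $\Gamma^{\alpha'}v$ with $|\alpha'|\le k$).

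For \eqref{Equation1}, I rewrite the target as $w_j\partial_k G_{ik} - w_k\partial_k G_{ij}$ and use the constraint \eqref{ConstraintG} to substitute $\partial_k G_{ij} = \partial_j G_{ik} + h_{ijk}$. The principal piece $w_j\partial_k G_{ik} - w_k\partial_j G_{ik}$, upon inserting $\partial_k = w_k\partial_r - (w\wedge\Omega)_k/r$, has its $\partial_r$ contributions cancel by the symmetry $w_jw_k = w_kw_j$, leaving the purely angular expression
\[
w_j\partial_k G_{ik} - w_k\partial_j G_{ik} = \tfrac{1}{r}\bigl[w_k(w\wedge\Omega)_j - w_j(w\wedge\Omega)_k\bigr] G_{ik}.
\]
Multiplying by $r$ bounds this by $|\Omega G|$, and using $\widetilde{\Omega} G = \Omega G + [V,G]$ for matrices yields $|\Omega G|\lesssim L_k$. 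The leftover constraint term contributes $r|w_k h_{ijk}| \leq (t+r)|h| \lesssim N_k$, completing \eqref{Equation1}.

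For \eqref{Equation3}, I reduce to the previous two via the evolution equations. Multiplying $\partial_j v_i = \partial_t G_{ij} - g_{ij}$ by $t$, and $\partial_k G_{ik} = \partial_t v_i + \partial_i p - f_i$ by $tw_j$, and invoking $t\partial_t = S - r\partial_r$, yields
\[
t\partial_j v_i = SG_{ij} - r\partial_r G_{ij} - tg_{ij}, \qquad tw_j\partial_k G_{ik} = w_j Sv_i - rw_j\partial_r v_i + tw_j\partial_i p - tw_j f_i.
\]
Forming $(t\mp r)(\partial_j v_i \mp w_j\partial_k G_{ik})$ by combining the above with $\mp r(\partial_j v_i \mp w_j\partial_k G_{ik})$, the artifacts $-r\partial_r G_{ij}$ and $\pm rw_j\partial_r v_i$ recombine with $\pm r\partial_j v_i$, $\mp r w_j\partial_k G_{ik}$ to produce precisely $r(w_j\partial_k G_{ik} - \partial_r G_{ij})$ and $\mp r(\partial_j v_i - w_j\partial_r v_i)$, both controlled by \eqref{Equation1} and \eqref{Equation2}. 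The surviving terms $SG_{ij}$, $w_j Sv_i$, $tg_{ij}$, $tw_j\partial_i p$, $tw_j f_i$ all fall under $L_k + N_k$ directly, since $S$ is one of the $\Gamma_i$'s and the $t$-weighted pieces match the definition of $N_k$ verbatim. The main task is purely bookkeeping of tensor indices and signs; the conceptual point is that $t\partial_t \equiv S \pmod{r\partial_r}$, so after pairing with $\mp r$ the residual $r\partial_r$ terms are exactly the angular artifacts already absorbed in \eqref{Equation1}–\eqref{Equation2}.
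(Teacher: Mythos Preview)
Your proof is correct and follows essentially the same route as the paper: the angular decomposition \eqref{Decompose} handles \eqref{Equation2} directly, the constraint \eqref{ConstraintG} combined with the decomposition gives \eqref{Equation1}, and for \eqref{Equation3} you use the evolution equations together with $t\partial_t = S - r\partial_r$ to reduce to \eqref{Equation1}--\eqref{Equation2} plus terms absorbed by $L_k + N_k$. One small slip: where you write ``recombine with $\pm r\partial_j v_i$, $\mp r w_j\partial_k G_{ik}$'' the signs should read $\mp r\partial_j v_i$ and $+r w_j\partial_k G_{ik}$, but your stated outputs $r(w_j\partial_k G_{ik} - \partial_r G_{ij})$ and $\mp r(\partial_j v_i - w_j\partial_r v_i)$ are correct, so this is only a typo.
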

\begin{proof}
By the decomposition of $\nabla$, we have
\begin{equation}\label{H}
\begin{split}
&\ \ \ \ (\nabla\cdot \Gamma^\alpha G\otimes w)_{ij}=\partial_k\Gamma^\alpha G_{ik}w_j=w_k\partial_r\Gamma^\alpha G_{ik}w_j
-\frac{(w\wedge \Omega)_k}{r}\Gamma^\alpha G_{ik}w_j\\
&=\partial_j\Gamma^\alpha G_{ik}w_k+\frac{(w\wedge \Omega)_j}{r}\Gamma^\alpha G_{ik}w_k-\frac{(w\wedge \Omega)_k}{r}\Gamma^\alpha G_{ik}w_j.
\end{split}
\end{equation}
Notice that, by \eqref{ConstraintG}, we have
\begin{equation}
\begin{split}
\partial_j\Gamma^\alpha G_{ik}w_k=\partial_k\Gamma^\alpha G_{ij}w_k-h_\alpha w_k
=\partial_r\Gamma^\alpha G_{ij}-h_\alpha w_k,
\end{split}
\end{equation}
hence, we have
\begin{equation}
\begin{split}
r|\nabla\cdot \Gamma^\alpha G\otimes w-\partial_r\Gamma^\alpha G|\lesssim L_k+N_k,
\end{split}
\end{equation}
which is \eqref{Equation1}.

Similarly, by the decomposition of $\nabla$, we also have
\begin{equation}
\begin{split}
(\nabla \Gamma^\alpha v)_{ij}=\partial_j \Gamma^\alpha v_i=w_j\partial_r \Gamma^\alpha v_i-
\frac{(w\wedge \Omega)_j}{r}\Gamma^\alpha v_i,
\end{split}
\end{equation}
hence \eqref{Equation2} follows.

Using the equation \eqref{Equation} and the definition $S=t\partial_t + r\partial_r$, we can write
\begin{equation}\label{LastE}
\begin{split}
&t\nabla \cdot \Gamma^\alpha G+ r\partial_r \Gamma^\alpha v=S\Gamma^\alpha v-t f_\alpha +t\nabla \Gamma^\alpha p,\\
&t\nabla \Gamma^\alpha v+ r\partial_r \Gamma^\alpha G=S\Gamma^\alpha G-tg_\alpha.
\end{split}
\end{equation}
This is rearranged as follows:
\begin{equation}
\begin{split}
&t\nabla \cdot \Gamma^\alpha G\otimes w+r\nabla\Gamma^\alpha v=r\big[\nabla\Gamma^\alpha v-\partial_r \Gamma^\alpha v\otimes w\big]
+ \big[S\Gamma^\alpha v-t f_\alpha +t\nabla \Gamma^\alpha p\big]\otimes w,\\
&t\nabla \Gamma^\alpha v+ r\nabla\cdot\Gamma^\alpha G\otimes w=r\big[\nabla\cdot\Gamma^\alpha G\otimes w-\partial_r \Gamma^\alpha G\big]+S\Gamma^\alpha G-tg_\alpha.
\end{split}
\end{equation}
Then, combining with the inequality \eqref{Equation1} \eqref{Equation2}, we have
\begin{equation}
\begin{split}
(t\pm r)|\nabla \Gamma^\alpha v\pm \nabla \cdot \Gamma^\alpha G\otimes w|\lesssim L_k+N_k,
\end{split}
\end{equation}
which is inequality \eqref{Equation3}.
\end{proof}
\begin{lemma}\label{EX}
Suppose that $(v, F)=(v, I+G)$, $(v, G)\in H_\Gamma^k, k\geq 4$, solves \eqref{EquationM}.
If
$E_k(t)\ll 1$, then $X_k(t)\lesssim E_k(t)$.
\end{lemma}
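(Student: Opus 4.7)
The plan is to combine the three preceding ingredients: the pointwise $\pm w$ decomposition \eqref{Equation3}, Lemma \ref{GequationG} (which uses the constraint \eqref{ConstraintG0} to trade $\nabla\Gamma^\alpha G$ for $\nabla\cdot\Gamma^\alpha G$), and Lemma \ref{Lemma6.1} (which bounds $\|N_k\|$ by $E_k + E_k^{1/2}X_k^{1/2}$), and then to absorb all $E_k X_k$-type remainders using the smallness $E_k\ll 1$. The only non-mechanical step, and the main point of the argument, is that \eqref{Equation3} controls only the $\pm w$-aligned combinations: $\nabla\Gamma^\alpha v - \nabla\cdot\Gamma^\alpha G\otimes w$ comes with the good light-cone weight $(t-r)$, but $\nabla\Gamma^\alpha v + \nabla\cdot\Gamma^\alpha G\otimes w$ is only paired with $(t+r)$. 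I will decouple these using the parallelogram identity together with the trivial bound $(t-r)^2 \leq (t+r)^2$ valid for $t,r\geq 0$.

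Concretely, since $|w|=1$ and hence $|\nabla\cdot\Gamma^\alpha G\otimes w|^2 = |\nabla\cdot\Gamma^\alpha G|^2$, the parallelogram identity $|A|^2+|B|^2 = \frac{1}{2}(|A-B|^2 + |A+B|^2)$ combined with $(t-r)^2 \leq (t+r)^2$ yields the pointwise bound
\begin{equation*}
(t-r)^2\bigl[|\nabla\Gamma^\alpha v|^2 + |\nabla\cdot\Gamma^\alpha G|^2\bigr] \leq \tfrac{1}{2}(t-r)^2|\nabla\Gamma^\alpha v - \nabla\cdot\Gamma^\alpha G\otimes w|^2 + \tfrac{1}{2}(t+r)^2|\nabla\Gamma^\alpha v + \nabla\cdot\Gamma^\alpha G\otimes w|^2.
\end{equation*}
Applying \eqref{Equation3} with both signs, integrating in $x$, summing over $|\alpha|\leq k-1$, and using $\|L_k\|^2\lesssim E_k$ together with the bound $\|N_k\|^2\lesssim E_k^2 + E_kX_k$ from Lemma \ref{Lemma6.1}, I obtain
\begin{equation*}
\sum_{|\alpha|\leq k-1}\bigl[\|(t-r)\nabla\Gamma^\alpha v\|^2 + \|(t-r)\nabla\cdot\Gamma^\alpha G\|^2\bigr] \lesssim E_k + E_kX_k.
\end{equation*}

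Feeding the divergence bound into Lemma \ref{GequationG} gives
\begin{equation*}
\sum_{|\alpha|\leq k-1}\|(t-r)\nabla\Gamma^\alpha G\|^2 \lesssim E_k + \sum_{|\alpha|\leq k-1}\|(t-r)\nabla\cdot\Gamma^\alpha G\|^2 + X_k E_k^{1/2} \lesssim E_k + E_k X_k + X_k E_k^{1/2}.
\end{equation*}
Finally, the trivial inequality $\langle t-r\rangle^2 \leq 1 + (t-r)^2$ together with $\sum_{|\alpha|\leq k-1}\|\nabla\Gamma^\alpha U\|^2\lesssim E_k$ converts the $(t-r)$ weights into the $\langle t-r\rangle$ weights that appear in $X_k$ at the cost of an additional $E_k$, leading to
\begin{equation*}
X_k(t) \lesssim E_k(t) + E_k(t)X_k(t) + X_k(t)E_k(t)^{1/2}.
\end{equation*}
The hypothesis $E_k(t)\ll 1$ then lets me absorb the last two terms on the right, yielding $X_k(t)\lesssim E_k(t)$. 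The step requiring the most care in a fully detailed write-up is the $\pm w$ decoupling described above; everything else — integration, multi-index bookkeeping, the weight upgrade, and the final absorption — is routine once that decoupling is in hand.
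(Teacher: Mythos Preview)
Your proposal is correct and follows essentially the same approach as the paper: both decouple $\nabla\Gamma^\alpha v$ and $\nabla\cdot\Gamma^\alpha G$ via the $\pm w$ combinations (you use the parallelogram identity, the paper writes each as half the sum plus half the difference), invoke \eqref{Equation3} with both signs together with $|t-r|\leq t+r$, then apply Lemma \ref{GequationG} and Lemma \ref{Lemma6.1}, and finally absorb the $E_k X_k$ and $E_k^{1/2}X_k$ terms using $E_k\ll 1$. The only differences are cosmetic (the order in which you pass from $(t-r)$ to $\langle t-r\rangle$, and the exact form of the decoupling inequality).
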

\begin{proof}
 Starting with the definition of $X_k$, and using the fact that $\langle t-r\rangle\leq 1+|t-r|$,
 we obtain that
 \begin{equation}
 \begin{split}
X_k(t) &= \sum_{|\alpha|\leq k-1}\big(\|\langle t-r \rangle\nabla\Gamma^\alpha v\|^2+
\|\langle t-r \rangle\nabla\Gamma^\alpha G\|^2 \big)\\
&\lesssim E_k+\sum_{|\alpha|\leq k-1}\big(\|| t-r |\nabla\Gamma^\alpha v\|^2+
\|| t-r |\nabla\Gamma^\alpha G\|^2 \big).
\end{split}
\end{equation}
Since
\begin{equation}
\nabla\Gamma^\alpha v= \frac{1}{2}\big[ \nabla\Gamma^\alpha v+\nabla\cdot \Gamma^\alpha G\otimes w\big]+
\frac{1}{2}\big[ \nabla\Gamma^\alpha v-\nabla\cdot \Gamma^\alpha G\otimes w\big],
\end{equation}
and
\begin{equation}
\nabla\cdot\Gamma^\alpha G= \frac{1}{2}\big[ \nabla\Gamma^\alpha v+\nabla\cdot \Gamma^\alpha G\otimes w\big]w-
\frac{1}{2}\big[ \nabla\Gamma^\alpha v-\nabla\cdot \Gamma^\alpha G\otimes w\big]w,
\end{equation}
we see that
\begin{equation}
\begin{split}
&\ \ \ \ |t-r|\big[|\nabla\Gamma^\alpha v|+|\nabla\cdot\Gamma^\alpha G|\big]\\
&\lesssim |t+r|| \nabla\Gamma^\alpha v+\nabla\cdot \Gamma^\alpha G\otimes w|+
|t-r|| \nabla\Gamma^\alpha v-\nabla\cdot \Gamma^\alpha G\otimes w|.
\end{split}
\end{equation}
It follows from \eqref{Equation3} that
\begin{equation}
\begin{split}
\|(t-r)\nabla\Gamma^\alpha v\|^2+\|(t-r)\nabla\cdot\Gamma^\alpha G\|^2\lesssim E_k+\|N_k\|^2.
\end{split}
\end{equation}
Then by Lemma \ref{GequationG}, Lemma \ref{Lemma6.1}, we obtain
 \begin{equation}
 \begin{split}
X_k(t)
&\lesssim E_k+\sum_{|\alpha|\leq k-1}\big(\|| t-r |\nabla\Gamma^\alpha v\|^2+
\|| t-r |\nabla\Gamma^\alpha G\|^2 \big)\\
&\lesssim E_k+\sum_{|\alpha|\leq k-1}\big(\|| t-r |\nabla\Gamma^\alpha v\|^2+
\|| t-r |\nabla\cdot\Gamma^\alpha G\|^2 \big)+Q_\alpha\\
&\lesssim E_k+\|N_k\|^2+Q_\alpha\\
&\lesssim E_k+E_k^2+E_kX_k+X_kE_k^{1/2}.
\end{split}
\end{equation}
Under the assumption that $E_k\ll 1$, we obtain
\begin{equation}
X_k\lesssim E_k.
\end{equation}
\end{proof}
\section{Estimate for Good Unknown}

\begin{lemma}
Suppose that $(v, F)=(v, I+G)$, $(v, G)\in H_\Gamma^k$, solves \eqref{EquationM}.
If
$E_k(t)\ll 1$, then
for any $|\alpha|+4\leq k$, we have the following estimate
\begin{equation}\label{DecayOS2}
\begin{split}
&\ \ \ \ |\partial_r\Gamma^\alpha v+\partial_r\Gamma^\alpha Gw|_{L^\infty(r\geq \frac{\langle t \rangle}{16})}\\
&\lesssim \langle t\rangle^{-\frac{3}{2}}(E_{|\alpha|+3}^{\frac{1}{2}}+E_{|\alpha|+4}).
\end{split}
\end{equation}
\end{lemma}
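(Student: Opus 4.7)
The plan is to write the good unknown $Y_i := \partial_r \Gamma^\alpha v_i + w_j\,\partial_r \Gamma^\alpha G_{ij}$ as a short algebraic rearrangement using the identities of the previous section, then control each resulting piece pointwise in the far region $r\ge\langle t\rangle/16$. Four ingredients enter: the algebraic identities \eqref{Equation1} and \eqref{Equation3}, the weighted pointwise Sobolev bound of Lemma \ref{Ginequality2}, the refined pressure estimate in Lemma \ref{Pressure}, and the $r^{3/2}$--decay of the radial components from Lemma \ref{Decayr3/2}.

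\emph{Step 1 (Decomposition).} Contracting the $+$-version of \eqref{Equation3} with $w_j$ in the second slot yields $|\partial_r\Gamma^\alpha v_i+\partial_k\Gamma^\alpha G_{ik}|\lesssim(L_{|\alpha|+1}+N_{|\alpha|+1})/(t+r)$, while contracting \eqref{Equation1} with $w_j$ yields $|\partial_k\Gamma^\alpha G_{ik}-w_j\partial_r\Gamma^\alpha G_{ij}|\lesssim(L_{|\alpha|+1}+N_{|\alpha|+1})/r$. Writing $Y_i = [\partial_r\Gamma^\alpha v_i+\partial_k\Gamma^\alpha G_{ik}] - [\partial_k\Gamma^\alpha G_{ik}-w_j\partial_r\Gamma^\alpha G_{ij}]$, adding the two estimates, and using $r\sim\langle t\rangle$ on $r\ge\langle t\rangle/16$, one obtains $|Y|\lesssim\langle t\rangle^{-1}(L_{|\alpha|+1}+N_{|\alpha|+1})$.

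\emph{Step 2 (Linear and non-pressure nonlinear pieces).} Lemma \ref{Ginequality2} gives $|\Gamma^\beta U|\lesssim E_{|\alpha|+3}^{1/2}/\langle r\rangle$ for $|\beta|\le|\alpha|+1$, so in the region $L_{|\alpha|+1}/\langle t\rangle\lesssim E_{|\alpha|+3}^{1/2}/\langle t\rangle^{2}\le\langle t\rangle^{-3/2}E_{|\alpha|+3}^{1/2}$. For the non-pressure part of $N_{|\alpha|+1}$, the terms $t|f_\beta|+t|g_\beta|+(t+r)|h_\beta|$ are bilinear in $\Gamma^\sigma U$ and $\nabla\Gamma^\tau U$ with $|\sigma|+|\tau|\le|\alpha|$. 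Applying Lemma \ref{Ginequality2} to both factors (using $|\nabla\Gamma^\tau U|\lesssim|\Gamma^{|\tau|+1}U|$) yields a pointwise bound $\lesssim E_{|\alpha|+3}/\langle r\rangle^{2}$ in the region, giving after multiplication by $t$ and division by $\langle t\rangle$ a contribution $\lesssim\langle t\rangle^{-3/2}E_{|\alpha|+3}$.

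\emph{Step 3 (Pressure piece, the main obstacle).} The contribution $t|\nabla\Gamma^\beta p|/\langle t\rangle$ cannot be bounded directly pointwise because the Leray projector fails to be $L^\infty$-bounded. The plan is to trade pointwise control for $L^2$ control by applying Lemma \ref{Ginequality} to $\nabla\Gamma^\beta p$, writing
\[
r^{1/2}|\nabla\Gamma^\beta p(x)|\;\lesssim\;\sum_{|\mu|\le 1}\bigl\|\nabla\widetilde\Omega^\mu\nabla\Gamma^\beta p\bigr\|,
\]
then commute $\widetilde\Omega$ past $\nabla$ modulo lower-order $\Gamma$-derivatives and apply elliptic regularity to the Poisson equation $\Delta\Gamma^{\beta'}p=\nabla\cdot f_{\beta'}$ (for $|\beta'|\le|\beta|+1\le|\alpha|+1$). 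The right side then reduces to $L^2$ norms of derivatives of $f_{\beta'}$, which I bound using the null-structured refinement in Lemma \ref{Pressure}: placing the radial factor $w\cdot\Gamma^\tau v$ (respectively $w_i\,\Gamma^\tau G_{ij}$) in $L^\infty$ with its $\langle r\rangle^{-3/2}$ decay from Lemma \ref{Decayr3/2} and the complementary factor $\partial_j\Gamma^\sigma U$ in $L^2$ by $E_{|\alpha|+4}^{1/2}$. This yields $r^{1/2}|\nabla\Gamma^\beta p|\lesssim\langle t\rangle^{-3/2}E_{|\alpha|+4}$, hence $|\nabla\Gamma^\beta p|\lesssim\langle t\rangle^{-2}E_{|\alpha|+4}$, contributing $\lesssim\langle t\rangle^{-3/2}E_{|\alpha|+4}$ to $Y$. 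Summing the three contributions gives \eqref{DecayOS2}.

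\emph{Main obstacle.} The pressure step is the crux. The sharp $\langle t\rangle^{-3/2}$ decay only emerges from the confluence of (i) Lemma \ref{Ginequality}, which exchanges $L^\infty$ control for $L^2$ control of one additional angular derivative, (ii) the null structure of the refined pressure bound in Lemma \ref{Pressure}, which avoids the crude estimate $\|\nabla\Gamma^\beta p\|\lesssim\|f_\beta\|$, and (iii) the $\langle r\rangle^{-3/2}$ decay of the radial projections afforded by Lemma \ref{Decayr3/2}. Without (iii) one is stuck at $\langle t\rangle^{-1/2}$ for the pressure contribution, and without (ii) the null combination of $v$ and $Gw$ cannot be exploited. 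The one-derivative price of (i) is precisely what forces the index $E_{|\alpha|+4}$ in the final bound.
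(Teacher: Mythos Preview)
Your Steps 1 and 2 are essentially the paper's argument: the algebraic rearrangement is the same identity the paper derives directly from \eqref{LastE}, and your pointwise treatment of $L_{|\alpha|+1}$ and of the non-pressure part of $N_{|\alpha|+1}$ via Lemma~\ref{Ginequality2} is equivalent to what the paper does with Lemma~\ref{Ginequality}.

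Step 3, however, has a genuine gap. Once you invoke Lemma~\ref{Ginequality} to write
\[
r^{1/2}\,|\nabla\Gamma^\beta p(x)|\;\lesssim\;\sum_{|\mu|\le 1}\|\nabla\widetilde\Omega^\mu\nabla\Gamma^\beta p\|_{L^2(\mathbb{R}^3)}
\]
and then pass to the null-structured bilinear form via Lemma~\ref{Pressure}, the resulting $L^2$ norm is taken over \emph{all} of $\mathbb{R}^3$, not only over $\{r\ge\langle t\rangle/16\}$. Lemma~\ref{Decayr3/2} gives $|w\cdot\Gamma^\tau v|\lesssim\langle r\rangle^{-3/2}E_{|\tau|+2}^{1/2}$, but $\langle r\rangle^{-3/2}$ does \emph{not} translate into $\langle t\rangle^{-3/2}$ on the interior region $r\le\langle t\rangle/16$: there one only has $\langle r\rangle^{-3/2}\le 1$, so your argument produces no time decay whatsoever for that portion of the $L^2$ norm. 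The conclusion ``$r^{1/2}|\nabla\Gamma^\beta p|\lesssim\langle t\rangle^{-3/2}E_{|\alpha|+4}$'' therefore does not follow from what you wrote.

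The paper closes this gap by explicitly splitting the global $L^2$ norm of the bilinear form into $P_1$ (near region) and $P_2$ (far region). The far-region piece $P_2$ is handled exactly as you describe, via the radial decomposition and Lemma~\ref{Decayr3/2}. For the near-region piece $P_1$, the paper uses that $\langle t\rangle\sim\langle t-r\rangle$ there, places the higher-order factor in $L^\infty$ using \eqref{DecayT}, and absorbs the remaining $\langle t\rangle^{1/2}$ into the weighted norm $\|\langle t-r\rangle\nabla\Gamma^\sigma U\|\le X_k^{1/2}$, finally invoking $X_k\lesssim E_k$ from Lemma~\ref{EX}. You need to add this near-region argument (and hence cite Lemma~\ref{EX}) for Step~3 to go through.
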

\begin{proof}
Using \eqref{LastE}, we have
\begin{equation}
\begin{split}
&\ \ \ \ r\partial_r \Gamma^\alpha v+tw\cdot\nabla \Gamma^\alpha v+t\nabla \cdot \Gamma^\alpha G+r\partial_r \Gamma^\alpha G w\\
&=S\Gamma^\alpha v-t f_\alpha +t\nabla \Gamma^\alpha p+(S\Gamma^\alpha G-tg_\alpha)\cdot w,
\end{split}
\end{equation}
hence, using the decomposition of $\nabla$ \eqref{Decompose}, we have
\begin{equation}
\begin{split}
&\ \ \ \ (r+t)(\partial_r \Gamma^\alpha v+\partial_r \Gamma^\alpha Gw)\\&=\frac{t}{r}(w\wedge \Omega)\cdot \Gamma^\alpha G
+S\Gamma^\alpha v-t f_\alpha +t\nabla \Gamma^\alpha p+(S\Gamma^\alpha G-tg_\alpha)\cdot w.
\end{split}
\end{equation}

In the region $r\geq \frac{\langle t \rangle}{16}$, we still need to squeeze out an additional decay factor of $\langle t\rangle^{-1/2}$. Multiplying this inequality with $\langle t\rangle^{\frac{1}{2}}$, as $r\geq \frac{\langle t \rangle}{16}$, using Lemma \ref{Ginequality},
\begin{equation}
\begin{split}
&\ \ \ \ \langle t\rangle^{\frac{1}{2}}(r+t)|\partial_r \Gamma^\alpha v+\partial_r \Gamma^\alpha Gw|\\
&\lesssim\langle t\rangle^{\frac{1}{2}}|(w\wedge \Omega)\cdot \Gamma^\alpha G|
+\langle t\rangle^{\frac{1}{2}}|S\Gamma^\alpha v|+\langle t\rangle^{\frac{1}{2}} |tf_\alpha|
+\langle t\rangle^{\frac{3}{2}}|\nabla \Gamma^\alpha p|\\
&\ \ \ \ +\langle t\rangle^{\frac{1}{2}}|S\Gamma^\alpha G|+\langle t\rangle^{\frac{1}{2}}|tg_\alpha|\\
&\lesssim |\langle t\rangle^{\frac{3}{2}}\nabla \Gamma^\alpha p|+\langle t\rangle^{\frac{1}{2}}L_{|\alpha|+1}+\langle t\rangle^{\frac{1}{2}}N_{|\alpha|+1}\\
&\lesssim E_{|\alpha|+3}^{\frac{1}{2}}+E_{|\alpha|+3}+|\langle t\rangle^{\frac{3}{2}}\nabla \Gamma^\alpha p|.
\end{split}
\end{equation}
By Sobolev embedding $H^2(\mathbb{R}^3)\subset L^\infty(\mathbb{R}^3)$, Lemma \ref{Pressure}, we have
\begin{equation}\label{EstimatP}
\begin{split}
&\ \ \ \ |\langle t\rangle^{\frac{3}{2}}\nabla \Gamma^\alpha p|\lesssim \|\langle t\rangle^{\frac{3}{2}}\nabla \Gamma^{\alpha+2} p\|
\lesssim\sum_{\beta+\gamma=\alpha+2, |\beta|\leq|\gamma|}\langle t\rangle^{\frac{3}{2}}
\|\partial_j\Gamma^\beta v_i\Gamma^\gamma v_j-\partial_j\Gamma^\beta G_{ik}\Gamma^\gamma G_{jk}\|_{L^2}\\
&=\sum_{\beta+\gamma=\alpha+2, |\beta|\leq|\gamma|}\langle t\rangle^{\frac{3}{2}}
\|\partial_j\Gamma^\beta v_i\Gamma^\gamma v_j-\partial_j\Gamma^\beta G_{ik}\Gamma^\gamma G_{jk}\|_{L^2(r\leq \frac{\langle t \rangle}{16})}\\
&\ \ \ \ +\sum_{\beta+\gamma=\alpha+2, |\beta|\leq|\gamma|}\langle t\rangle^{\frac{3}{2}}
\|\partial_j\Gamma^\beta v_i\Gamma^\gamma v_j-\partial_j\Gamma^\beta G_{ik}\Gamma^\gamma G_{jk}\|_{L^2(r\geq \frac{\langle t \rangle}{16})}\\
&=P_1+P_2,
\end{split}
\end{equation}
here and in what follows, the notation $\Gamma^{\alpha+r}$ means $\Gamma^{\alpha+\beta}$ for $|\beta|=r$.
By \eqref{DecayT}, and Lemma \ref{EX}, $P_1$ is estimated as follows
\begin{equation}\label{EstimatP_1}
\begin{split}
P_1&\lesssim \sum_{\beta+\gamma=\alpha+2, |\beta|\leq|\gamma|}\langle t\rangle^{\frac{3}{2}}
\|\Gamma^\gamma U\nabla \Gamma^\beta U\|_{L^2(r\leq \frac{\langle t \rangle}{16})}\\
&\lesssim |\langle t\rangle\Gamma^{\alpha+2} U|_{L^\infty(r\leq \frac{\langle t \rangle}{16})}
\|\langle t-r\rangle\nabla \Gamma^{[\frac{\alpha+2}{2}]}U\|\\
&\lesssim (E_{|\alpha|+2}^{\frac{1}{2}}+X_{|\alpha|+4}^{\frac{1}{2}})X_{[\frac{\alpha+2}{2}]+1}^{\frac{1}{2}}
\lesssim E_{|\alpha|+4}.
\end{split}
\end{equation}
By the decomposition of $\nabla$ \eqref{Decompose}
and Lemma \ref{Decayr3/2}, Lemma \ref{Ginequality},
\begin{equation}\label{EstimatP_2}
\begin{split}
P_2&\lesssim\sum_{\beta+\gamma=\alpha+2, |\beta|\leq|\gamma|}
\|r^{\frac{3}{2}}(w_j\partial_r\Gamma^\beta v_i\Gamma^\gamma v_j-w_j\partial_r\Gamma^\beta G_{ik}\Gamma^\gamma G_{jk})\|_{L^2(r\geq \frac{\langle t \rangle}{16})}\\
&\ \ \ \ +\sum_{\beta+\gamma=\alpha+2, |\beta|\leq|\gamma|}
\|r^{\frac{1}{2}}((w\wedge \Omega)_j\Gamma^\beta v_i\Gamma^\gamma v_j-(w\wedge \Omega)_j\Gamma^\beta G_{ik}\Gamma^\gamma G_{jk})\|_{L^2(r\geq \frac{\langle t \rangle}{16})}\\
&\lesssim E_{|\alpha|+4}.
\end{split}
\end{equation}
Combining all the estimates above, we obtain the estimate \eqref{DecayOS2}.
\end{proof}

\section{Local Energy Decay}
In this section, we will prove that $\langle t\rangle^{3/2}|\nabla \Gamma^\alpha U|_{L^\infty(r\leq \frac{\langle t \rangle}{16})}$
is bounded by the energy, hence, we obtain $(1+t)^{-3/2}$ time decay in $L^\infty(r\leq \frac{\langle t \rangle}{16})$.
Which is important for us to prove our main theorem.
\begin{lemma}\label{Lemma 8.1}
Suppose that $(v, G)$ is a solution of
\begin{equation}\label{Tequation}
\begin{cases}
\partial_t v - \nabla\cdot G = f,\\
\partial_t G  - \nabla v = g,\\
\nabla\cdot v = 0,\quad \nabla\cdot G^T= 0,
\end{cases}
\end{equation}
then inside the cone $r\leq \frac{\langle t\rangle}{8}$, we have the following
\begin{equation}\label{Tcone}
\begin{split}
&\ \ \ \ \|t^{3/2}\nabla^2 G\|_{L^2(r\leq \frac{\langle t\rangle}{8})}^2+\|t^{3/2}\nabla^2  v\|_{L^2(r\leq \frac{\langle t\rangle}{8})}^2
\lesssim\|\eta t^{3/2}\nabla^2 G\|^2+\|\eta t^{3/2}\nabla^2  v\|^2\\
&\lesssim \|\eta t^{3/2} \nabla f\|^2 +\|\eta t^{1/2} \nabla S v\|^2+\|\eta t^{1/2} \nabla^2 v\|^2+\|\eta t^{1/2} \nabla v\|^2
+\|\eta t^{3/2} \nabla g\|^2 \\
&\ \ \ \ +\|\eta t^{1/2} \nabla S G\|^2+\|\eta t^{1/2} \nabla^2 G\|^2+\|t^{1/2} \nabla G\|_{L^2(r\leq \frac{\langle t \rangle}{4})}^2
+\|\eta t^{3/2}\nabla h_0\|^2,
\end{split}
\end{equation}
here $h_0=G_{lj}\partial_l G_{ik}-G_{lk}\partial_l G_{ij}$ and $\eta(t, x)=\varphi(r/\frac{\langle t \rangle}{8})$, $\varphi$ is as before (Lemma \ref{DecayT_0}),
\end{lemma}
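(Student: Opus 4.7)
The plan is to prove this by a weighted elliptic regularity estimate combined with direct expressions for $\Delta v$ and $\Delta G$ obtained from the equations and the curl-type constraint. The first inequality in \eqref{Tcone} is immediate because $\eta\equiv 1$ on $\{r\leq \langle t\rangle/8\}$; the real task is thus to bound $\|\eta t^{3/2}\nabla^2 G\|$ and $\|\eta t^{3/2}\nabla^2 v\|$ by the stated right hand side.

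First I would establish the weighted elliptic estimate
\[
\|\eta t^{3/2}\nabla^2 u\|^2 \lesssim \|\eta t^{3/2}\Delta u\|^2+\|t^{1/2}\nabla u\|^2_{L^2(\langle t\rangle/8\leq r\leq \langle t\rangle/4)}
\]
for any $u\in H^2(\bR^3)$. This is obtained by two successive integrations by parts starting from $\int \eta^2 t^3|\nabla^2 u|^2$: one recovers $\int \eta^2 t^3 |\Delta u|^2$ up to cross terms of the form $\int \nabla(\eta^2 t^3)\cdot \nabla u\cdot \nabla^2 u$ and $\int \nabla(\eta^2 t^3)\cdot \nabla u\cdot \Delta u$. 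Since $|\nabla\eta|\lesssim 1/\langle t\rangle$ on the annulus $\langle t\rangle/8\leq r\leq \langle t\rangle/4$, a weighted Cauchy--Schwarz with a small parameter absorbs the $\nabla^2 u$ factor into the left hand side and leaves the displayed $\|t^{1/2}\nabla u\|^2$ remainder. Applied with $u=v$ and $u=G$, this reduces the problem to estimating $\|\eta t^{3/2}\Delta v\|$ and $\|\eta t^{3/2}\Delta G\|$.

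Next, from the second equation in \eqref{Tequation}, $\partial_j v_i=\partial_t G_{ij}-g_{ij}$, one has $\Delta v_i=\partial_t(\partial_k G_{ik})-\partial_k g_{ik}$. Multiplying by $t$ and using $t\partial_t=S-x_m\partial_m$ together with $[S,\partial_k]=-\partial_k$ gives
\[
t\Delta v_i=\partial_k(SG)_{ik}-\partial_k G_{ik}-x_m\partial_m\partial_k G_{ik}-t\partial_k g_{ik}.
\]
Since $|x|\leq \langle t\rangle/4$ on $\mathrm{supp}(\eta)$, the term $x_m\partial_m\partial_k G_{ik}$ contributes at most $\tfrac14\|\eta t^{3/2}\nabla^2 G\|$ after multiplication by $t^{1/2}\eta$. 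In parallel, the curl-type constraint $\partial_k G_{ij}-\partial_j G_{ik}=h_0$ combined with the first equation produces
\[
\Delta G_{ij}=\partial_j(\partial_k G_{ik})+\partial_k h_0=\partial_j(\partial_t v_i-f_i)+\partial_k h_0,
\]
and the analogous scaling manipulation yields
\[
t\Delta G_{ij}=\partial_j(Sv)_i-\partial_j v_i-x_m\partial_j\partial_m v_i-t\partial_j f_i+t\partial_k h_0,
\]
with the $x_m\partial_j\partial_m v_i$ term contributing at most $\tfrac14\|\eta t^{3/2}\nabla^2 v\|$ in $L^2$.

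Combining the weighted elliptic estimate with these two pointwise identities produces a coupled pair of inequalities for $A:=\|\eta t^{3/2}\nabla^2 v\|$ and $B:=\|\eta t^{3/2}\nabla^2 G\|$: each of $A^2$ and $B^2$ is bounded by the listed right-hand side terms of \eqref{Tcone} plus a contribution of size $\tfrac{1}{16}B^2$, respectively $\tfrac{1}{16}A^2$. Adding the two and absorbing the small cross terms into the left hand side yields the claimed bound. The main obstacle is the careful bookkeeping of the prefactor $r/\langle t\rangle\leq 1/4$ on $\mathrm{supp}(\eta)$, which is exactly what powers the coupled absorption; were $\eta$ supported on a wider cone this small factor would be lost, and the scheme for trading one extra power of $t^{1/2}$ between $\nabla^2 v$ and $\nabla^2 G$ would no longer close.
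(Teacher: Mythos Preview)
Your argument is correct, but it is organized differently from the paper's proof.

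The paper does not pass through a weighted elliptic estimate for $v$ at all. Instead it exploits the asymmetry of the system: the second equation already reads $\nabla v=\partial_tG-g$, so after applying $t\partial_t=S-x\cdot\nabla$ and one more spatial derivative one obtains a \emph{pointwise} identity for $t\nabla^2 v$ in terms of $\nabla SG$, $\nabla G$, $x\cdot\nabla^2G$ and $\nabla g$; multiplying by $\eta t^{1/2}$ gives $\|\eta t^{3/2}\nabla^2 v\|$ directly. For $G$ the first equation only delivers $\nabla\cdot G$, so the paper bounds $\|\eta t^{3/2}\nabla(\nabla\cdot G)\|$ by the same scaling trick and then runs a weighted div--curl integration by parts (using the constraint $\partial_kG_{ij}-\partial_jG_{ik}=h_0$) to upgrade $\|\eta t^{3/2}\nabla(\nabla\cdot G)\|$ to $\|\eta t^{3/2}\nabla^2 G\|$; this is where the commutator terms $\|t^{1/2}\nabla G\|_{L^2(r\le\langle t\rangle/4)}$ and $\|\eta t^{3/2}\nabla h_0\|$ enter. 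The coupled absorption then proceeds exactly as in your last paragraph.

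Your route treats $v$ and $G$ symmetrically: one weighted elliptic lemma reduces both to Laplacians, and the curl constraint is used pointwise in the formula $\Delta G_{ij}=\partial_j(\nabla\cdot G)_i+\partial_k h_0$ rather than via integration by parts. This is a bit more modular. One small discrepancy: your elliptic estimate for $u=v$ produces the remainder $\|t^{1/2}\nabla v\|_{L^2(\langle t\rangle/8\le r\le\langle t\rangle/4)}$ without the factor $\eta$, whereas the paper's stated right-hand side has $\|\eta t^{1/2}\nabla v\|$. This is harmless in the application (Lemma~\ref{Genergy}) since both are controlled by $X_{l+2}^{1/2}$, but strictly speaking your proof yields a variant of \eqref{Tcone} with a slightly larger right-hand side.
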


\begin{proof}
Multiplying the first and second equation of \eqref{Tequation} by t, moving the $t\partial_t$
terms on the right and rewriting them using the scaling operator gives
\begin{equation}\label{Tequation1}
\begin{cases}
- t\nabla\cdot G = tf-S v +r\partial_r v=tf-S v + x\cdot\nabla v, \\
- t\nabla v = tg-S G +r\partial_r G=tg-S G +x\cdot\nabla  G. \\
\end{cases}
\end{equation}
Taking one more derivative on both side of equation \eqref{Tequation1}, we obtain
\begin{equation}\label{Tequation2}
\begin{cases}
- t\nabla\cdot\nabla G = t\nabla f-\nabla S v +\nabla x\cdot\nabla v+ x\cdot\nabla\nabla v, \\
- t\nabla^2 v = t\nabla g-\nabla S G +\nabla x\cdot\nabla G+x\cdot\nabla\nabla G. \\
\end{cases}
\end{equation}
Equation \eqref{Tequation2} will be our starting point for the derivation of the estimates \eqref{Tcone}.

To begin we multiply equation \eqref{Tequation2} by $\eta t^{1/2}$, and use the triangle
inequality to obtain
\begin{equation}
\begin{split}
&\ \ \ \ \|\eta t^{3/2} \nabla\cdot \nabla G\|^2+\|\eta t^{3/2}\nabla^2  v\|^2\\
&\leq 4\|\eta t^{3/2} \nabla f\|^2 +4\|\eta t^{1/2} \nabla S v\|^2+4\|\eta t^{1/2} \nabla v\|^2+4\|\eta t^{1/2} r\nabla^2 v\|^2\\
&\ \ \ \ +4\|\eta t^{3/2} \nabla g\|^2 +4\|\eta t^{1/2} \nabla S G\|^2+4\|\eta t^{1/2} \nabla G\|^2+4\|\eta t^{1/2} r\nabla^2 G\|^2.\\
\end{split}
\end{equation}
Notice that $r\leq \frac{\langle t\rangle}{4}$ on supp$\eta$, hence we have
\begin{equation}\label{8.7}
\begin{split}
&\ \ \ \ \|\eta t^{3/2} \nabla\cdot \nabla G\|^2+\|\eta t^{3/2}\nabla^2  v\|^2\\
&\leq 4\|\eta t^{3/2} \nabla f\|^2 +4\|\eta t^{1/2} \nabla S v\|^2+4\|\eta t^{1/2} \nabla v\|^2+\frac{1}{4}\|\eta t^{1/2} \nabla^2 v\|^2\\
&\ \ \ \ +4\|\eta t^{3/2} \nabla g\|^2 +4\|\eta t^{1/2} \nabla S G\|^2+4\|\eta t^{1/2} \nabla G\|^2+\frac{1}{4}\|\eta t^{1/2}\nabla^2 G\|^2\\
&\ \ \ \ +\frac{1}{4}\|\eta t^{3/2}\nabla^2 v\|^2+\frac{1}{4}\|\eta t^{3/2}\nabla^2 G\|^2.
\end{split}
\end{equation}

Now we compute
\begin{equation}
\begin{split}
\|\eta t^{3/2} \nabla\cdot \nabla G\|^2&=\int \eta^2 t^3 \partial_k \nabla G_{ik}\partial_j \nabla G_{ij}\\
&=-\int 2\eta \eta^{\prime}t^3\frac{8x_k}{\langle t\rangle r}\nabla G_{ik}\partial_j \nabla G_{ij}
-\eta^2 t^3 \nabla G_{ik}\partial_j \partial_k\nabla G_{ij}\\
&=-\int 2\eta \eta^{\prime}t^3\frac{8x_k}{\langle t\rangle r}\nabla G_{ik}\partial_j \nabla G_{ij}
+2\eta \eta^{\prime}t^3\frac{8x_j}{\langle t\rangle r} \nabla G_{ik}\partial_k\nabla G_{ij}\\
&\ \ \ \ +\eta^2 t^3 \partial_j\nabla G_{ik} \partial_k\nabla G_{ij}\\
&=-\int 2\eta \eta^{\prime}t^3\frac{8x_k}{\langle t\rangle r}\nabla G_{ik}\partial_j \nabla G_{ij}
+2\eta \eta^{\prime}t^3\frac{8x_j}{\langle t\rangle r} \nabla G_{ik}\partial_k\nabla G_{ij}\\
&\ \ \ \ +\eta^2 t^3 \partial_j\nabla G_{ik}\nabla h_0+
\|\eta t^{3/2} \nabla^2 G\|^2.
\end{split}
\end{equation}
The last equality follows from \eqref{ConstraintG0}, hence
\begin{equation}
\begin{split}
&\ \ \ \ \|\eta t^{3/2} \nabla^2G\|^2\\
&\leq\|\eta t^{3/2} \nabla\cdot \nabla G\|^2+16\text{max}|\eta^{\prime}|\|\eta t^{3/2} \nabla\cdot\nabla G\|\|t^{1/2}\nabla G\|_{L^2(r\leq \frac{\langle t \rangle}{4})}\\
&\ \ \ \ +16\text{max}|\eta^{\prime}|\|\eta t^{3/2} \nabla^2 G\|\|t^{1/2}\nabla G\|_{L^2(r\leq \frac{\langle t \rangle}{4})}
+\|\eta t^{3/2} \nabla^2 G\|\|\eta t^{3/2} \nabla h_0\|\\
&\leq \frac{3}{2}\|\eta t^{3/2} \nabla\cdot\nabla G\|^2+\frac{1}{2}\|\eta t^{3/2} \nabla^2G\|^2+C(\|t^{1/2}\nabla G\|_{L^2(r\leq \frac{\langle t \rangle}{4})}^2+\|\eta t^{3/2} \nabla h_0\|^2),
\end{split}
\end{equation}
we obtain
\begin{equation}
\begin{split}
\|\eta t^{3/2} \nabla^2G\|^2
\leq 3\|\eta t^{3/2} \nabla\cdot\nabla G\|^2+C(\|t^{1/2}\nabla G\|_{L^2(r\leq \frac{\langle t \rangle}{4})}^2+\|\eta t^{3/2} \nabla h_0\|^2).
\end{split}
\end{equation}
Combining with \eqref{8.7}, we obtain
\begin{equation}
\begin{split}
&\ \ \ \ \|\eta t^{3/2}\nabla^2 G\|^2+\|\eta t^{3/2}\nabla^2  v\|^2\\
&\lesssim \|\eta t^{3/2} \nabla f\|^2 +\|\eta t^{1/2} \nabla S v\|^2+\|\eta t^{1/2} \nabla^2 v\|^2+\|\eta t^{1/2} \nabla v\|^2
+\|\eta t^{3/2} \nabla g\|^2 \\
&\ \ \ \ +\|\eta t^{1/2} \nabla S G\|^2+\|\eta t^{1/2} \nabla^2 G\|^2+\|t^{1/2} \nabla G\|_{L^2(r\leq \frac{\langle t \rangle}{4})}^2
+\|\eta t^{3/2}\nabla h_0\|^2.
\end{split}
\end{equation}
Then Lemma \ref{Lemma 8.1} follows.
\end{proof}
Let
\begin{equation}
\Psi_l(U)=\sum_{|\alpha|\leq l}\|\eta\langle t\rangle^{3/2}\nabla^2 \Gamma^\alpha U\|.
\end{equation}
\begin{lemma}\label{Genergy}
Suppose that $(v, F)=(v, I+G)$, $(v, G)\in H_\Gamma^k$, solves \eqref{EquationM}.
If
$E_k(t)\ll 1$, then for any $l+4\leq k$, we have $\Psi_l(U)\lesssim E_{l+4}^{\frac{1}{2}}$.
\end{lemma}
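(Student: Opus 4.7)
The plan is to apply Lemma \ref{Lemma 8.1} to the system satisfied by $\Gamma^\alpha U$ for each $|\alpha| \leq l$, and close the resulting inequality by absorbing a small term involving $\Psi_l$ itself. Specifically, $(\Gamma^\alpha v, \Gamma^\alpha G)$ solves \eqref{Equation} with source $f = -\nabla\Gamma^\alpha p + f_\alpha$, $g = g_\alpha$, and constraint residual $h_\alpha$, so Lemma \ref{Lemma 8.1} yields
\[
\|\eta \langle t\rangle^{3/2}\nabla^2\Gamma^\alpha v\|^2 + \|\eta \langle t\rangle^{3/2}\nabla^2\Gamma^\alpha G\|^2 \lesssim \|\eta \langle t\rangle^{3/2}\nabla(f_\alpha-\nabla\Gamma^\alpha p)\|^2 + \|\eta \langle t\rangle^{3/2}\nabla g_\alpha\|^2 + \|\eta \langle t\rangle^{3/2}\nabla h_\alpha\|^2 + (\text{lower order}).
\]
The lower-order terms all carry a weight $\eta\langle t\rangle^{1/2}$; since $\langle t\rangle^{1/2}\lesssim \langle t-r\rangle$ on $\operatorname{supp}\eta$ (where $r\leq \langle t\rangle/4$ forces $\langle t-r\rangle\sim\langle t\rangle$), they are controlled by $X_{|\alpha|+2}$, and Lemma \ref{EX} bounds this by $E_{l+4}$.

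For the cubic-type contributions $\|\eta\langle t\rangle^{3/2}\nabla f_\alpha\|$, $\|\eta\langle t\rangle^{3/2}\nabla g_\alpha\|$, $\|\eta\langle t\rangle^{3/2}\nabla h_\alpha\|$, I would split each product into two kinds of terms. Terms of shape $\Gamma^\beta U\cdot\nabla^2\Gamma^\gamma U$ (with $|\beta|+|\gamma|\leq|\alpha|\leq l$) are handled by placing $\Gamma^\beta U$ in $L^\infty$ via Sobolev, giving $|\Gamma^\beta U|_{L^\infty}\lesssim E^{1/2}$ (since $|\beta|+2\leq k$), so the contribution is $\lesssim E^{1/2}\|\eta\langle t\rangle^{3/2}\nabla^2\Gamma^\gamma U\|\leq E^{1/2}\Psi_l$. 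Terms of shape $\nabla\Gamma^\beta U\cdot\nabla\Gamma^\gamma U$ are treated by putting the lower-order factor (WLOG $|\beta|\leq l/2$) in $L^\infty$ using \eqref{DecayT} for $r\leq \langle t\rangle/16$ and \eqref{Ginequality2} for $\langle t\rangle/16\leq r\leq \langle t\rangle/4$, yielding $|\nabla\Gamma^\beta U|_{L^\infty(\operatorname{supp}\eta)}\lesssim \langle t\rangle^{-1}(E+X)^{1/2}$. For the other factor, converting the $\langle t\rangle$ weight into a $\langle t-r\rangle$ weight via $\operatorname{supp}\eta$ gives $\|\eta\langle t\rangle^{3/2}\nabla\Gamma^\gamma U\|\lesssim \langle t\rangle^{1/2}X^{1/2}$, so the product is $\lesssim \langle t\rangle^{-1/2}(E+X)^{1/2}X^{1/2}\lesssim E$.

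The main obstacle, and the most delicate piece, is the pressure contribution $\|\eta\langle t\rangle^{3/2}\nabla^2\Gamma^\alpha p\|$, since the Leray projector does not play nicely with weights. My plan is to bypass this by bounding it by the global unweighted $L^2$ estimate with $\eta\leq 1$: namely, $\|\eta\langle t\rangle^{3/2}\nabla^2\Gamma^\alpha p\|\leq \langle t\rangle^{3/2}\|\nabla\Gamma^{\alpha+1}p\|_{L^2}$, and then invoke the \emph{null-form} version of Lemma \ref{Pressure} to express the right side as sums $\|\partial_j\Gamma^\beta v_i\Gamma^\gamma v_j - \partial_j\Gamma^\beta G_{ik}\Gamma^\gamma G_{jk}\|$ with $|\beta|\leq |\gamma|$ and $|\beta|+|\gamma|=|\alpha|+1\leq l+1$. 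Each such term is then estimated exactly as in the earlier pressure estimate \eqref{EstimatP}--\eqref{EstimatP_2}: inside $r\leq \langle t\rangle/16$ using \eqref{DecayT} to put $\Gamma^\gamma U$ in $L^\infty$ with $\langle t\rangle^{-1}$ decay and $\|\nabla\Gamma^\beta U\|_{L^2(r\leq \langle t\rangle/16)}\lesssim\langle t\rangle^{-1}X^{1/2}$; outside $r\geq \langle t\rangle/16$ using the decomposition of $\nabla$ to extract the good component $w\cdot\Gamma^\gamma v$ (with $r^{-3/2}$ decay from Lemma \ref{Decayr3/2}) and the angular component (with $r^{-1/2}$ decay from Lemma \ref{Ginequality}). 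Both pieces produce $\lesssim E$, so $\|\langle t\rangle^{3/2}\nabla^2\Gamma^\alpha p\|\lesssim E$.

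Combining everything, $\Psi_l^2\lesssim E_{l+4} + E\Psi_l^2 + E^2$, and the smallness hypothesis $E_k\ll 1$ allows absorption of $E\Psi_l^2$ into the left side, yielding $\Psi_l\lesssim E_{l+4}^{1/2}$. A careful bookkeeping of indices shows all Sobolev hypotheses are met under $l+4\leq k$: Lemma \ref{Ginequality2} needs $|\beta|+3\leq k$ (satisfied since $|\beta|\leq l/2\leq (k-4)/2$), Sobolev embedding needs $|\beta|+2\leq k$, and Lemma \ref{Decayr3/2} needs $|\gamma|+2\leq k$ (satisfied since $|\gamma|\leq l+1\leq k-3$).
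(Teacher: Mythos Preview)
Your proof is correct and follows the same route as the paper: apply Lemma~\ref{Lemma 8.1} to \eqref{Equation}, control the $\eta\langle t\rangle^{1/2}$-weighted lower-order terms by $X_{l+2}\lesssim E_{l+4}$, and handle the pressure exactly via \eqref{EstimatP}--\eqref{EstimatP_2}. The one minor tactical difference is that for the pieces $\Gamma^\beta U\cdot\nabla^2\Gamma^\gamma U$ in $\nabla f_\alpha,\nabla g_\alpha,\nabla h_\alpha$ you bound $|\Gamma^\beta U|_{L^\infty}\lesssim E^{1/2}$ by plain Sobolev and then absorb $E^{1/2}\Psi_l$, whereas the paper uses \eqref{DecayT} (which extends to $r\leq\langle t\rangle/4$) to obtain $|\Gamma^\beta U|_{L^\infty(\operatorname{supp}\eta)}\lesssim\langle t\rangle^{-1}(E^{1/2}+X^{1/2})$ and closes the estimate $\|\eta t^{3/2}\nabla f_\alpha\|^2+\cdots\lesssim E_{l+4}^2$ directly without absorption; both are valid.
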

\begin{proof}
Combining Lemma \ref{Tequation} with equations \eqref{Equation} give us
\begin{equation}
\begin{split}
\Psi_l^2(U)&\lesssim\sum_{|\alpha|\leq l}\|\eta\langle t\rangle^{3/2}\nabla^2 \Gamma^\alpha U\|^2\\
&\lesssim E_{l+2}+\|t^{3/2}\nabla\Gamma^{\alpha+1}p\|^2+
\|\eta t^{3/2} \nabla f_\alpha\|^2 +\|\eta t^{1/2} \nabla S \Gamma^\alpha v\|^2\\
&\ \ \ \ +\|\eta t^{1/2} \nabla^2 \Gamma^\alpha v\|^2+\|\eta t^{1/2} \nabla \Gamma^\alpha v\|^2
+\|\eta t^{3/2} \nabla g_\alpha\|^2 +\|\eta t^{1/2} \nabla S \Gamma^\alpha G\|^2\\
&\ \ \ \ +\|\eta t^{1/2} \nabla^2 \Gamma^\alpha G\|^2+\|t^{1/2} \nabla \Gamma^\alpha G\|_{L^2(r\leq \frac{\langle t \rangle}{4})}^2
+\|\eta t^{3/2}\nabla h_\alpha\|^2.
\end{split}
\end{equation}
In a similar fashion as we estimate \eqref{EstimatP}-\eqref{EstimatP_2} earlier, we have
\begin{equation}
\begin{split}
t^{3/2}\|\nabla\Gamma^{\alpha+1}p\|\lesssim E_{l+4}.
\end{split}
\end{equation}
On the other hand, by the definition of $f_\alpha$, $g_\alpha$, $h_\alpha$, inequality \eqref{DecayT} and Lemma \ref{EX}, we have
\begin{equation}
\begin{split}
&\ \ \ \ \|\eta t^{3/2} \nabla f_\alpha\|^2+\|\eta t^{3/2} \nabla g_\alpha\|^2
+\|\eta t^{3/2}\nabla h_\alpha\|^2\\
&\lesssim E_{l+4}^2,
\end{split}
\end{equation}
hence, we have
\begin{equation}
\Psi_l^2(U)\lesssim E_{l+2}+E_{l+4}^2+ X_{l+2}\lesssim E_{l+4}.
\end{equation}
\end{proof}

\begin{rem}
By Lemma \ref{DecayT_0}, replacing $f$ by $\langle t\rangle^{1/2}\nabla f$, we have
\begin{equation}
\begin{split}
&\ \ \ \ \langle t\rangle^{3/2}|\nabla f|_{L^\infty(r\leq \frac{\langle t \rangle}{16})}\\
&\lesssim \langle t\rangle^{1/2}\|\nabla f\|_{L^2(r\leq \frac{\langle t \rangle}{8})}+\langle t\rangle^{3/2}\|\nabla^2 f\|_{L^2(r\leq \frac{\langle t \rangle}{8})}
+\langle t\rangle^{3/2}\|\nabla^3 f\|_{L^2(r\leq \frac{\langle t \rangle}{8})}\\
&\lesssim X_1^{\frac{1}{2}}+\langle t\rangle^{3/2}\|\nabla^2 f\|_{L^2(r\leq \frac{\langle t \rangle}{8})}+\langle t\rangle^{3/2}\|\nabla^3 f\|_{L^2(r\leq \frac{\langle t \rangle}{8})},
\end{split}
\end{equation}
then, for any $\alpha$, we have
\begin{equation}\label{Decayt3/2}
\begin{split}
&\ \ \ \  \langle t\rangle^{3/2}|\nabla \Gamma^\alpha f|_{L^\infty(r\leq \frac{\langle t \rangle}{16})}\\
&\lesssim X_{|\alpha|+1}^{\frac{1}{2}}+\langle t\rangle^{3/2}\|\nabla^2 \Gamma^\alpha f\|_{L^2(r\leq \frac{\langle t \rangle}{8})}+\langle t\rangle^{3/2}\|\nabla^3 \Gamma^\alpha f\|_{L^2(r\leq \frac{\langle t \rangle}{8})}.
\end{split}
\end{equation}
Hence, inside the cone, under the condition of
Lemma \ref{Genergy}, for any $|\alpha|+5\leq k$, we have
\begin{equation}\label{DecayTT3/2}
\begin{split}
\langle t\rangle^{3/2}|\nabla \Gamma^\alpha U|_{L^\infty(r\leq \frac{\langle t \rangle}{16})}
\lesssim E_{|\alpha|+5}^{\frac{1}{2}}.
\end{split}
\end{equation}
\end{rem}

\section{Energy Estimate with a Ghost Weight}
Choosing $q = q(t - r)$, with $q(\sigma) = \int_0^\sigma\frac{1}{1
+ z^2}dz$ so that $q^\prime(\sigma) = \frac{1}{1 + \sigma^2}$ and
$|q(\sigma)| \leq \frac{\pi}{2}$. Let $|\alpha| \leq k$, taking
the inner product of the first and second equation in \eqref{Equation}
with $e^{- q}\Gamma^\alpha v$ and $e^{- q}\Gamma^\alpha G$
respectively, and then adding them up, one has
\begin{equation}
\begin{split}
&\ \ \ \ \int\Big(e^{- q}\partial_t(|\Gamma^\alpha v|^2 + |\Gamma^\alpha
  G|^2) - 2e^{- q}(\Gamma^\alpha v_i\partial_j\Gamma^\alpha G_{ij}
  + \Gamma^\alpha G_{ij}\partial_j\Gamma^\alpha v_i)\Big)dx\\
&= - 2\int e^{- q}\Gamma^\alpha v\cdot\nabla\Gamma^\alpha pdx +
  2\int e^{- q}(f_\alpha\cdot \Gamma^\alpha v + (g_\alpha)_{ij} \Gamma^\alpha G_{ij})dx.
\end{split}
\end{equation}
Integration by parts gives that
\begin{equation}
\begin{split}
&\ \ \ \ \frac{d}{dt}\int e^{- q}(|\Gamma^\alpha v|^2 + |\Gamma^\alpha
  G|^2)dx\\
&= - \int e^{- q}\big[\partial_tq(|\Gamma^\alpha v|^2 + |\Gamma^\alpha G|^2)
- 2\partial_jq\Gamma^\alpha v_i\Gamma^\alpha G_{ij}\big]dx\\
&\ \ \ \  -\ 2\int e^{- q}\Gamma^\alpha v\cdot\nabla\Gamma^\alpha
pdx + 2\int e^{- q}(f_\alpha\cdot \Gamma^\alpha v + (g_\alpha)_{ij} \Gamma^\alpha
  G_{ij})dx\\
&= - \int \frac{e^{- q}}{1 + (t - r)^2}\big[(|\Gamma^\alpha v|^2 + |\Gamma^\alpha G|^2)
+2w_j\Gamma^\alpha v_i\Gamma^\alpha G_{ij}\big]dx\\
&\ \ \ \  -\ 2\int e^{- q}\Gamma^\alpha v\cdot\nabla\Gamma^\alpha
pdx + 2\int e^{- q}(f_\alpha\cdot \Gamma^\alpha v + (g_\alpha)_{ij} \Gamma^\alpha G_{ij})dx\\
&= - \int \frac{e^{- q}}{1 + (t - r)^2}\big(|\Gamma^\alpha v +
\Gamma^\alpha G\omega|^2 + |\Gamma^\alpha G-\Gamma^\alpha G w\otimes w|^2\big)dx\\
&\ \ \ \ - 2\int e^{- q}\Gamma^\alpha v\cdot\nabla\Gamma^\alpha
pdx + 2\int e^{-q}\big[f_\alpha\cdot \Gamma^\alpha v
+ (g_\alpha)_{ij} \Gamma^\alpha G_{ij}\big]dx.
\end{split}
\end{equation}
That is
\begin{equation}\label{Gequation}
\begin{split}
&\frac{d}{dt}\int e^{- q}(|\Gamma^\alpha v|^2 + |\Gamma^\alpha
  G|^2)dx\\
&\ \ \ \ + \int \frac{e^{- q}}{1 + (t - r)^2}\big(|\Gamma^\alpha v +
\Gamma^\alpha G\omega|^2 + |\Gamma^\alpha G-\Gamma^\alpha Gw\otimes w|^2\big)dx\\
&= 2\int e^{-q}\big[f_\alpha\cdot \Gamma^\alpha v
  + (g_\alpha)_{ij} \Gamma^\alpha G_{ij}\big]dx- 2\int e^{- q}\Gamma^\alpha v\cdot\nabla\Gamma^\alpha
  pdx.
\end{split}
\end{equation}
We emphasize that here we do not use integration by parts in the term involving pressure. We
also point out that we cannot use the approach of Lemma \ref{Lemma6.1} to estimate the nonlinear terms
because we now have that $|\alpha|\leq k$ rather than $|\alpha|\leq k-1$ as we had earlier.
To simplify the notation a bit, we shall write
\begin{equation}
z=\sum_{|\alpha|\leq k}\big(|\Gamma^\alpha v +
\Gamma^\alpha G\omega|^2 + |\Gamma^\alpha G-\Gamma^\alpha Gw\otimes w|^2\big),
\end{equation}
and
\begin{equation}
\begin{split}
Z&= \int \frac{e^{- q}}{1 + (t - r)^2} zdx\\
&=
\sum_{|\alpha|\leq k} \int \frac{e^{- q}}{1 + (t - r)^2}\big(|\Gamma^\alpha v +
\Gamma^\alpha G\omega|^2 + |\Gamma^\alpha G-\Gamma^\alpha Gw\otimes w|^2\big)dx.
\end{split}
\end{equation}

Let us first treat the first term in \eqref{Gequation}. Recall that
$f_\alpha$ and $g_\alpha$ are given by \eqref{Equationfg}. By
divergence-free of $v$ and $G^T$, one has
\begin{equation}
\begin{split}
&\ \ \ \ \int e^{-q}\big[f_\alpha\cdot \Gamma^\alpha v + (g_\alpha)_{ij}
  \Gamma^\alpha G_{ij}\big]dx\\
& = \sum_{\beta + \gamma = \alpha, \gamma \neq \alpha}
  \int e^{-q}\Gamma^\alpha v_i
  \big[\partial_j\Gamma^\gamma G_{ik} \Gamma^\beta G_{jk} -
  \Gamma^\beta v_j\partial_j\Gamma^\gamma v_i\big]dx\\
&\ \ \ \  +\ \sum_{\beta + \gamma = \alpha, \gamma \neq \alpha}\int
  e^{-q}\Gamma^\alpha G_{ik}\big[\partial_j\Gamma^\gamma v_{i} \Gamma^\beta G_{jk}
  - \Gamma^\beta v_j\partial_j\Gamma^\gamma
  G_{ik}\big]dx\\
&\ \ \ \ +\ \frac{1}{2}\int e^{-q}\partial_j
  \big[2\Gamma^\alpha v_i\Gamma^\alpha G_{ik} G_{jk} -
  v_j(|\Gamma^\alpha v|^2 + |\Gamma^\alpha G|^2)\big]dx\\
&=I+ II +III.
\end{split}
\end{equation}
To estimate the last term in this equality, by integration by parts,
we have
\begin{equation}
\begin{split}
2III&=\int e^{-q}\partial_j
  \big[2\Gamma^\alpha v_i\Gamma^\alpha G_{ik} G_{jk} -
  v_j(|\Gamma^\alpha v|^2 + |\Gamma^\alpha G|^2)\big]dx\\
&= - \int \frac{e^{-q}}{1 + (t - r)^2}\big[2\Gamma^\alpha
  v_i\Gamma^\alpha G_{ik} G_{jk}\omega_j -
  (|\Gamma^\alpha v|^2 + |\Gamma^\alpha
  G|^2)v_j\omega_j\big]dx.
\end{split}
\end{equation}
Inside the cone ($|x|\leq\frac{\langle t\rangle}{16}$), this term is bounded by
\begin{equation}
2III\lesssim \frac{1}{(1+t)^2}E_k^{3/2}.
\end{equation}
Away from the origin ($|x|\geq\frac{\langle t\rangle}{16}$), by Lemma \ref{Decayr3/2}, this term is bounded by
\begin{equation}
\begin{split}
2III
&\lesssim\frac{1}{(1+t)^{3/2}}E_kE_2^{1/2}\lesssim\frac{1}{(1+t)^{3/2}}E_k^{3/2}.
\end{split}
\end{equation}
Next, we are going to estimate the terms I and II.

\subsection{Inside the cone}
On the region:  $|x|\leq\frac{\langle t\rangle}{16}$. The terms $I$ and $II$ are bounded by
\begin{equation}
\begin{split}
&\sum_{\substack{\beta+\gamma=\alpha, |\alpha|\leq k\\ \gamma\neq\alpha}}
  \int_{r\leq\frac{\langle t\rangle}{16}} e^{-q}\Gamma^\alpha v_i
  \big[\partial_j\Gamma^\gamma G_{ik} \Gamma^\beta G_{jk} -
  \Gamma^\beta v_j\partial_j\Gamma^\gamma v_i\big]dx\\
&\ \ \ \ + \sum_{\substack{\beta+\gamma=\alpha, |\alpha|\leq k\\ \gamma\neq\alpha}} \int_{r\leq\frac{\langle t\rangle}{16}}
  e^{-q}\Gamma^\alpha G_{ik}\big[\partial_j\Gamma^\gamma v_{i} \Gamma^\beta G_{jk}
  - \Gamma^\beta v_j\partial_j\Gamma^\gamma
  G_{ik}\big]dx\\
&\lesssim\sum_{\substack{\beta+\gamma=\alpha, |\alpha|\leq k\\ \gamma\neq\alpha}} \int_{r\leq\frac{\langle t\rangle}{16}}
|\Gamma^\alpha U \Gamma^\beta U\nabla\Gamma^\gamma U|.
\end{split}
\end{equation}
We make use of the fact that since $\beta+\gamma=\alpha$, $|\alpha|\leq k$ , $\gamma\neq \alpha$,
either $|\beta|\leq k^{\prime}$ or $|\gamma| \leq k^{\prime}$, with $k^{\prime}=[\frac{k}{2}]$. Note that $k\geq 9$, hence,
we have $k^\prime+ 5\leq k$.

In the first case, using \eqref{DecayT} we have:
\begin{equation}
\begin{split}
&\ \ \ \ \sum_{\substack{\beta+\gamma=\alpha, |\alpha|\leq k\\ \gamma\neq\alpha}} \int_{r\leq\frac{\langle t\rangle}{16}}
|\Gamma^\alpha U \Gamma^\beta U\nabla\Gamma^\gamma U|\\
&\lesssim\frac{1}{\langle t\rangle^2} \|\Gamma^k U\|\|\langle t-r\rangle\nabla\Gamma^{k-1}U\|_{L^2({r\leq\frac{\langle t\rangle}{16}})}
|\langle t\rangle\Gamma^{k^{\prime}}U|_{L^\infty({r\leq\frac{\langle t\rangle}{16}})}\\
&\lesssim\frac{1}{\langle t\rangle^2} E_k^{\frac{1}{2}} X_k^{\frac{1}{2}}(E_{k^\prime}^{\frac{1}{2}}+X_{k^\prime+2}^{\frac{1}{2}})
\lesssim\frac{1}{\langle t\rangle^2} E_k^{\frac{3}{2}}.
\end{split}
\end{equation}

In the second case, using \eqref{DecayTT3/2}:
\begin{equation}
\begin{split}
\sum_{\substack{\beta+\gamma=\alpha, |\alpha|\leq k\\ \gamma\neq\alpha}} \int_{r\leq\frac{\langle t\rangle}{16}}
|\Gamma^\alpha U \Gamma^\beta U\partial\Gamma^\gamma U|
&\lesssim\frac{1}{\langle t\rangle^{3/2}}\|\Gamma^k U\|\|\Gamma^k U\||\langle t\rangle^{3/2}\nabla \Gamma^{k^{\prime}}U|_{L^\infty({r\leq\frac{\langle t\rangle}{16}})}\\
&\lesssim \frac{1}{\langle t\rangle^{3/2}}E_k^{\frac{1}{2}}E_k^{\frac{1}{2}}E_{k^\prime+5}^{\frac{1}{2}}
\lesssim\frac{1}{\langle t\rangle^{3/2}} E_k^{\frac{3}{2}}.
\end{split}
\end{equation}
\subsection{Away from the origin}
On the region:  $|x|\geq\frac{\langle t\rangle}{16}$. By the decomposition of $\nabla$
\eqref{Decompose}, the terms $I$ and $II$ can be written as

\begin{equation}
\begin{split}
&\sum_{\substack{\beta+\gamma=\alpha, |\alpha|\leq k\\ \gamma\neq\alpha}}
  \big |\int_{r\geq\frac{\langle t\rangle}{16}} e^{-q}\Gamma^\alpha v_i
  \big[\partial_j\Gamma^\gamma G_{ik} \Gamma^\beta G_{jk} -
  \Gamma^\beta v_j\partial_j\Gamma^\gamma v_i\big]dx\\
&\ \ \ \  + \int_{r\geq\frac{\langle t\rangle}{16}}
  e^{-q}\Gamma^\alpha G_{ik}\big[\partial_j\Gamma^\gamma v_{i} \Gamma^\beta G_{jk}
  - \Gamma^\beta v_j\partial_j\Gamma^\gamma
  G_{ik}\big]dx\big |\\
&=\sum_{\substack{\beta+\gamma=\alpha, |\alpha|\leq k\\ \gamma\neq\alpha}}
  \big |\int_{r\geq\frac{\langle t\rangle}{16}} e^{-q}\Gamma^\alpha v_i
  \big[w_j\partial_r\Gamma^\gamma G_{ik} \Gamma^\beta G_{jk} -
  \Gamma^\beta v_jw_j\partial_r\Gamma^\gamma v_i\big]dx\\
&\ \ \ \  + \int_{r\geq\frac{\langle t\rangle}{16}}
  e^{-q}\Gamma^\alpha G_{ik}\big[w_j\partial_r\Gamma^\gamma v_{i} \Gamma^\beta G_{jk}
  - \Gamma^\beta v_jw_j\partial_r\Gamma^\gamma
  G_{ik}\big]dx\big |\\
&\ \ \ \ + \int_{r\geq\frac{\langle t\rangle}{16}} R_\alpha dx,
\end{split}
\end{equation}
here
\begin{equation}
|R_\alpha|\lesssim \frac{1}{r}\sum_{\substack{\beta+\gamma=\alpha, |\alpha|\leq k\\ \gamma\neq\alpha}}
|(\Gamma^\alpha v, \Gamma^\alpha G)||(\Gamma^\beta v, \Gamma^\beta G)||(\Omega\Gamma^\gamma v, \Omega\Gamma^\gamma G)|,
\end{equation}
since $\beta+\gamma=\alpha, \gamma\neq\alpha, |\alpha|\leq k$,
using Lemma \ref{Ginequality}, the last term is bounded by
\begin{equation}
\begin{split}
\int_{r\geq\frac{\langle t\rangle}{16}} R_\alpha dx\lesssim \frac{1}{\langle t\rangle^2}
\|\Gamma^k U\|^2|r\Gamma^{[\frac{k}{2}]+1}U|_{L^\infty}
\lesssim\frac{1}{\langle t\rangle^2} E_k^{\frac{3}{2}}.
\end{split}
\end{equation}
On the other hand, we have the following simply calculation,
\begin{equation}
\begin{split}
&\ \ \ \ \sum_{\substack{\beta+\gamma=\alpha, |\alpha|\leq k\\ \gamma\neq\alpha}}\int_{r\geq\frac{\langle t\rangle}{16}}
e^{-q}\Gamma^\alpha v_i
  \big[w_j\partial_r\Gamma^\gamma G_{ik} \Gamma^\beta G_{jk} -
  \Gamma^\beta v_jw_j\partial_r\Gamma^\gamma v_i\big]\\
&=-\sum_{\substack{\beta+\gamma=\alpha, |\alpha|\leq k\\ \gamma\neq\alpha}}\int_{r\geq\frac{\langle t\rangle}{16}}
e^{-q}\Gamma^\alpha v_i
  (\Gamma^\beta v_j+\Gamma^\beta G_{jl}w_l)w_j\partial_r\Gamma^\gamma v_i\\
&\ \ \ \ +\sum_{\substack{\beta+\gamma=\alpha, |\alpha|\leq k\\ \gamma\neq\alpha}}\int_{r\geq\frac{\langle t\rangle}{16}}
e^{-q}\Gamma^\alpha v_i
  \Gamma^\beta G_{jl}w_l(w_j\partial_r\Gamma^\gamma v_i+w_j\partial_r\Gamma^\gamma G_{ik}w_k)\\
&\ \ \ \  +\sum_{\substack{\beta+\gamma=\alpha, |\alpha|\leq k\\ \gamma\neq\alpha}}\int_{r\geq\frac{\langle t\rangle}{16}}
e^{-q}\Gamma^\alpha v_i
(\partial_r\Gamma^\gamma G(I-w \otimes w))_{il}w_j (\Gamma^\beta G(I-w\otimes w))_{jl}\\
&=I_1+I_2+I_3.
\end{split}
\end{equation}
And
\begin{equation}
\begin{split}
&\ \ \ \ \sum_{\substack{\beta+\gamma=\alpha, |\alpha|\leq k\\ \gamma\neq\alpha}}\int_{r\geq\frac{\langle t\rangle}{16}}
e^{-q}
\Gamma^\alpha G_{ik}\big[w_j\partial_r\Gamma^\gamma v_{i} \Gamma^\beta G_{jk}
  - \Gamma^\beta v_jw_j\partial_r\Gamma^\gamma
  G_{ik}\big]\\
&=\sum_{\substack{\beta+\gamma=\alpha, |\alpha|\leq k\\ \gamma\neq\alpha}}\int_{r\geq\frac{\langle t\rangle}{16}}
e^{-q}
\Gamma^\alpha G_{ik}
(w_j\partial_r\Gamma^\gamma v_{i}+w_j\partial_r\Gamma^\gamma G_{il}w_l) \Gamma^\beta G_{jk}\\
&\ \ \ \ - \sum_{\substack{\beta+\gamma=\alpha, |\alpha|\leq k\\ \gamma\neq\alpha}}\int_{r\geq\frac{\langle t\rangle}{16}}
e^{-q}
\Gamma^\alpha G_{ik}
(\Gamma^\beta v_j+\Gamma^\beta G_{jl}w_l)w_j\partial_r\Gamma^\gamma G_{ik}\\
&\ \ \ \ +\sum_{\substack{\beta+\gamma=\alpha, |\alpha|\leq k\\ \gamma\neq\alpha}}\int_{r\geq\frac{\langle t\rangle}{16}}
e^{-q}
\Gamma^\alpha G_{ik}
w_j\Gamma^\beta G_{jl}w_l(\partial_r\Gamma^\gamma G(I-w\otimes w))_{ik}\\
&\ \ \ \ -\sum_{\substack{\beta+\gamma=\alpha, |\alpha|\leq k\\ \gamma\neq\alpha}}\int_{r\geq\frac{\langle t\rangle}{16}}
e^{-q}
\Gamma^\alpha G_{ik}
w_j\partial_r\Gamma^\gamma G_{il}w_l (\Gamma^\beta G(I-w\otimes w))_{jk}\\
&=II_1+II_2+II_3+II_4.
\end{split}
\end{equation}

In the following, we will estimate the above terms in the region $r\geq\frac{\langle t\rangle}{16}$ one by one.
We make use of the fact that since $\beta+\gamma=\alpha$, $|\alpha|\leq k$ and $k\geq 9$,
either $|\beta|\leq k^{\prime}$ or $|\gamma| \leq k^{\prime}$, with $k^{\prime}=[\frac{k}{2}]$. Hence, we have

\begin{equation}
\begin{split}
I_1&=-\sum_{\substack{\beta+\gamma=\alpha, |\alpha|\leq k\\ \gamma\neq\alpha}} \int_{r\geq\frac{\langle t\rangle}{16}}
e^{-q}\Gamma^\alpha v_i(\Gamma^\beta v_j+\Gamma^\beta G_{jl}w_l)w_j\partial_r\Gamma^\gamma v_i\\
&\lesssim\sum_{\substack{\beta+\gamma=\alpha, |\alpha|\leq k\\ \gamma\neq\alpha}} \int_{r\geq\frac{\langle t\rangle}{16}}
|\Gamma^\alpha v||\Gamma^\beta vw+w\Gamma^\beta G w||\partial_r\Gamma^\gamma v|\\
&\lesssim \int_{r\geq\frac{\langle t\rangle}{16}}\frac{1}{r^{3/2}}|\Gamma^kv|r^{3/2}|\Gamma^{k^{\prime}} vw+w\Gamma^{k^{\prime}} G w|
|\partial_r\Gamma^{k-1} U|\\
&\ \ \ \ +\int_{r\geq\frac{\langle t\rangle}{16}}|\Gamma^kv|\frac{1}{\langle r\rangle}\frac{1}{\langle t-r\rangle}|\Gamma^k v+\Gamma^k G w|
\langle r\rangle\langle t-r\rangle|\partial_r\Gamma^{k^{\prime}} U|,
\end{split}
\end{equation}
the first term of this inequality is estimated by Lemma \ref{Decayr3/2}, and the second term is estimated by Lemma \ref{Ginequality2},
\begin{equation}
\begin{split}
I_1&\lesssim \frac{1}{(1+t)^{3/2}}E_k^{\frac{1}{2}}E_{k^{\prime}+2}^{\frac{1}{2}}E_k^{\frac{1}{2}}
+\frac{1}{1+t}E_k^{\frac{1}{2}}Z^{\frac{1}{2}}X_{k^{\prime}+3}^{\frac{1}{2}}\\
&\lesssim \frac{1}{(1+t)^{3/2}}E_k^{\frac{3}{2}}+\frac{1}{1+t}E_kZ^{\frac{1}{2}}.
\end{split}
\end{equation}
Similarly, the terms $I_2$ can be bounded as follows
\begin{equation}
\begin{split}
I_2&=\sum_{\substack{\beta+\gamma=\alpha, |\alpha|\leq k\\ \gamma\neq\alpha}} \int_{r\geq\frac{\langle t\rangle}{16}}
e^{-q}\Gamma^\alpha v_i\Gamma^\beta G_{jl}w_l(w_j\partial_r\Gamma^\gamma v_i+w_j\partial_r\Gamma^\gamma G_{ik}w_k)\\
&\lesssim\sum_{\substack{\beta+\gamma=\alpha, |\alpha|\leq k\\ \gamma\neq\alpha}} \int_{r\geq\frac{\langle t\rangle}{16}}
|\Gamma^\alpha v||w\Gamma^\beta G w||\partial_r\Gamma^\gamma v+\partial_r\Gamma^\gamma Gw|\\
&\lesssim \int_{r\geq\frac{\langle t\rangle}{16}}
|\Gamma^kv||w\Gamma^{k^{\prime}} G w||\partial_r\Gamma^{k-1} v+\partial_r\Gamma^{k-1} Gw|\\
&\ \ \ \ +\int_{r\geq\frac{\langle t\rangle}{16}}
|\Gamma^kv||w\Gamma^k G w||\partial_r\Gamma^{k^{\prime}} v+\partial_r\Gamma^{k^{\prime}} Gw|.
\end{split}
\end{equation}
The first term is handled using Lemma \ref{Decayr3/2}, the second term is handled using \eqref{DecayOS2},
thus we have
\begin{equation}
\begin{split}
II_1&\lesssim \frac{1}{(1+t)^{3/2}}E_k^{\frac{1}{2}}E_{k^{\prime}+2}^{\frac{1}{2}}E_k^{\frac{1}{2}}
+\frac{1}{\langle t\rangle^{3/2}} E_k(E_{k^\prime+3}^{\frac{1}{2}}+E_{k^\prime+4})\\
&\lesssim \frac{1}{(1+t)^{3/2}}E_k^{\frac{3}{2}}.
\end{split}
\end{equation}

The terms $I_3$ are handled in a similar fashion as $I_1$, $I_2$
\begin{equation}
\begin{split}
I_3&=\sum_{\substack{\beta+\gamma=\alpha, |\alpha|\leq k\\ \gamma\neq\alpha}} \int_{r\geq\frac{\langle t\rangle}{16}}
e^{-q}\Gamma^\alpha v_i(\partial_r\Gamma^\gamma G(I-w \otimes w))_{il} w_j(\Gamma^\beta G(I-w\otimes w))_{jl}\\
&\lesssim\sum_{\substack{\beta+\gamma=\alpha, |\alpha|\leq k\\ \gamma\neq\alpha}} \int_{r\geq\frac{\langle t\rangle}{16}}
|\Gamma^\alpha v||\partial_r\Gamma^\gamma G(I-w\otimes w)||w\Gamma^\beta G(I-w\otimes w)|\\
&\lesssim \int_{r\geq\frac{\langle t\rangle}{16}}|\Gamma^k v||\partial_r\Gamma^{k-1} G(I-w\otimes w)||w\Gamma^{k^{\prime}} G(I-w\otimes w)|\\
&\ \ \ \ + \int_{r\geq\frac{\langle t\rangle}{16}}|\Gamma^k v||\partial_r\Gamma^{k^{\prime}} G(I-w\otimes w)||w\Gamma^k G(I-w\otimes w)|\\
&\lesssim \frac{1}{(1+t)^{3/2}}E_k^{\frac{1}{2}}E_k^{\frac{1}{2}}E_{k^{\prime}+2}^{\frac{1}{2}}
+\frac{1}{1+t}E_k^{\frac{1}{2}}X_{k^{\prime}+3}^{\frac{1}{2}}Z^{\frac{1}{2}}\\
&\lesssim \frac{1}{(1+t)^{3/2}}E_k^{\frac{3}{2}}+\frac{1}{1+t}E_kZ^{\frac{1}{2}}.
\end{split}
\end{equation}

It remains to estimate the four terms of $II$.
In a similar fashion, the first term of $II$ is bounded by
\begin{equation}
\begin{split}
II_1&=\sum_{\substack{\beta+\gamma=\alpha, |\alpha|\leq k\\ \gamma\neq\alpha}} \int_{r\geq\frac{\langle t\rangle}{16}}
e^{-q}\Gamma^\alpha G_{ik}(w_j\partial_r\Gamma^\gamma v_{i}+w_j\partial_r\Gamma^\gamma G_{il}w_l) \Gamma^\beta G_{jk}\\
&\lesssim\sum_{\substack{\beta+\gamma=\alpha, |\alpha|\leq k\\ \gamma\neq\alpha}} \int_{r\geq\frac{\langle t\rangle}{16}}
|\Gamma^\alpha G||\partial_r\Gamma^\gamma v+\partial_r\Gamma^\gamma Gw||w\Gamma^\beta G|\\
&\lesssim \int_{r\geq\frac{\langle t\rangle}{16}}|\Gamma^k G|r^{-3/2}(\partial_r\Gamma^{k-1} v+\partial_r\Gamma^{k-1} Gw)
(r^{3/2}w\Gamma^{k^\prime} G)\\
&\ \ \ \ +\int_{r\geq\frac{\langle t\rangle}{16}}|\Gamma^k G|(\partial_r\Gamma^{k^\prime} v+\partial_r\Gamma^{k^\prime} Gw)
w\Gamma^{k} G.
\end{split}
\end{equation}
The first term is handled using Lemma \ref{Decayr3/2}, the second term is handled using \eqref{DecayOS2},
thus we have
\begin{equation}
\begin{split}
II_1&\lesssim \frac{1}{(1+t)^{3/2}}E_k^{\frac{1}{2}}E_k^{\frac{1}{2}}E_{k^{\prime}+2}^{\frac{1}{2}}
+\frac{1}{\langle t\rangle^{3/2}} E_k(E_{k^\prime+3}^{\frac{1}{2}}+E_{k^\prime+4})\\
&\lesssim \frac{1}{(1+t)^{3/2}}E_k^{\frac{3}{2}}.
\end{split}
\end{equation}
The second term of $II$ is handled in a similar fashion as $I_1$:
\begin{equation}
\begin{split}
II_2&=-\sum_{\substack{\beta+\gamma=\alpha, |\alpha|\leq k\\ \gamma\neq\alpha}} \int_{r\geq\frac{\langle t\rangle}{16}}
e^{-q}\Gamma^\alpha G_{ik}(\Gamma^\beta v_j+\Gamma^\beta G_{jl}w_l)w_j\partial_r\Gamma^\gamma G_{ik}\\
&\lesssim\sum_{\substack{\beta+\gamma=\alpha, |\alpha|\leq k\\ \gamma\neq\alpha}} \int_{r\geq\frac{\langle t\rangle}{16}}
|\Gamma^\alpha G||\Gamma^\beta vw+w\Gamma^\beta G w||\partial_r\Gamma^\gamma G|\\
&\lesssim \int_{r\geq\frac{\langle t\rangle}{16}}|\Gamma^k G|\frac{1}{r^{3/2}}r^{3/2}|\Gamma^{k^{\prime}} vw+w\Gamma^{k^{\prime}} G w|
|\partial_r\Gamma^{k-1} G|\\
&\ \ \ \ +\int_{r\geq\frac{\langle t\rangle}{16}}|\Gamma^k G|\frac{1}{\langle r\rangle}\frac{1}{\langle t-r\rangle}|\Gamma^k vw+w\Gamma^k G w|
\langle r\rangle\langle t-r\rangle|\partial_r\Gamma^{k^{\prime}} G|\\
&\lesssim \frac{1}{(1+t)^{3/2}}E_k^{\frac{1}{2}}E_{k^{\prime}+3}^{\frac{1}{2}}E_k^{\frac{1}{2}}
+\frac{1}{1+t}E_k^{\frac{1}{2}}Z^{\frac{1}{2}}X_{k^{\prime}+3}^{\frac{1}{2}}\\
&\lesssim \frac{1}{(1+t)^{3/2}}E_k^{\frac{3}{2}}+\frac{1}{1+t}E_kZ^{\frac{1}{2}}.
\end{split}
\end{equation}
The third term is bounded by
\begin{equation}
\begin{split}
II_3&=\sum_{\substack{\beta+\gamma=\alpha, |\alpha|\leq k\\ \gamma\neq\alpha}} \int_{r\geq\frac{\langle t\rangle}{16}}
e^{-q}\Gamma^\alpha G_{ik}w_j\Gamma^\beta G_{jl}w_l(\partial_r\Gamma^\gamma G(I-w\otimes w))_{ik}\\
&\lesssim \int_{r\geq\frac{\langle t\rangle}{16}} |\Gamma^k G||w\Gamma^{k^\prime}G||\nabla\Gamma^{k-1}G|\\
&+
\sum_{\substack{\beta+\gamma=\alpha, |\alpha|\leq k\\ \gamma\neq\alpha}} \int_{r\geq\frac{\langle t\rangle}{16}}
|\Gamma^k G_{ik}w_j\Gamma^k G_{jl}w_l(\partial_r\Gamma^{k^{\prime}} G(I-w\otimes w))_{ik}|,
\end{split}
\end{equation}
using Lemma \ref{Decayr3/2}, we have
\begin{equation}
\begin{split}
&\ \ \ \ \int_{r\geq\frac{\langle t\rangle}{16}} |\Gamma^k G||w\Gamma^{k^\prime}G||\nabla\Gamma^{k-1}G|\\
&\lesssim \frac{1}{(1+t)^{3/2}}E_k^{\frac{3}{2}},
\end{split}
\end{equation}
on the other hand, in order to estimate the second term of $II_3$, we use the decomposition of $\nabla$
\eqref{Decompose} and the constraint of $G$ \eqref{ConstraintG},
\begin{equation}
\begin{split}
&\ \ \ \ \Gamma^k G_{ik}w_j\Gamma^k G_{jl}w_l(\partial_r\Gamma^{k^{\prime}} G(I-w\otimes w))_{ik}\\
&=\Gamma^k G_{ik}\Gamma^k G_{jl}w_l(w_j\partial_r\Gamma^{k^{\prime}} G_{ik}-w_j\partial_r\Gamma^{k^{\prime}}G_{ih}w_hw_k)\\
&=\Gamma^k G_{ik}\Gamma^k G_{jl}w_l(w_j\partial_r\Gamma^{k^{\prime}} G_{ik}-\partial_j\Gamma^{k^{\prime}}G_{ih}w_hw_k
-\frac{(w\wedge \Omega)_j}{r}\Gamma^{k^{\prime}}G_{ih}w_hw_k)\\
&=\Gamma^k G_{ik}\Gamma^k G_{jl}w_l(w_j\partial_r\Gamma^{k^{\prime}} G_{ik}-\partial_h\Gamma^{k^{\prime}}G_{ij}w_hw_k
-h_{k^{\prime}}w_hw_k-\frac{(w\wedge \Omega)_j}{r}\Gamma^{k^{\prime}}G_{ih}w_hw_k)\\
&=\Gamma^k G_{ik}\Gamma^k G_{jl}w_l(w_j\partial_r\Gamma^{k^{\prime}} G_{ik}-\partial_r\Gamma^{k^{\prime}}G_{ij}w_k
-h_{k^{\prime}}w_hw_k-\frac{(w\wedge \Omega)_j}{r}\Gamma^{k^{\prime}}G_{ih}w_hw_k),
\end{split}
\end{equation}
while
\begin{equation}
\begin{split}
&\ \ \ \ w_j\partial_r\Gamma^{k^{\prime}} G_{ik}-\partial_r\Gamma^{k^{\prime}}G_{ij}w_k\\
&=\partial_j\Gamma^{k^{\prime}} G_{ik}-\partial_k\Gamma^{k^{\prime}}G_{ij}+
\frac{(w\wedge \Omega)_j}{r}\Gamma^{k^{\prime}} G_{ik}-\frac{(w\wedge \Omega)_k}{r}\Gamma^{k^{\prime}} G_{ij}\\
&=h_{k^{\prime}}+
\frac{(w\wedge \Omega)_j}{r}\Gamma^{k^{\prime}} G_{ik}-\frac{(w\wedge \Omega)_k}{r}\Gamma^{k^{\prime}} G_{ij},
\end{split}
\end{equation}
hence
\begin{equation}
\begin{split}
&\ \ \ \ \sum_{\substack{\beta+\gamma=\alpha, |\alpha|\leq k\\ \gamma\neq\alpha}} \int_{r\geq\frac{\langle t\rangle}{16}}
|\Gamma^k G_{ik}w_j\Gamma^k G_{jl}w_l(\partial_r\Gamma^{k^{\prime}} G(I-w\otimes w))_{ik}|\\
&\lesssim \int_{r\geq\frac{\langle t\rangle}{16}}|\Gamma^k G||\Gamma^k G|(h_{k^{\prime}}+\frac{\Gamma^{k^\prime+1}G}{r})
\lesssim \frac{1}{(1+t)^2}E_k^{\frac{3}{2}}.
\end{split}
\end{equation}
The last term of $II$ is treat as follows
\begin{equation}
\begin{split}
II_4&=-\sum_{\substack{\beta+\gamma=\alpha, |\alpha|\leq k\\ \gamma\neq\alpha}} \int_{r\geq\frac{\langle t\rangle}{16}}
e^{-q}\Gamma^\alpha G_{ik}w_j\partial_r\Gamma^\gamma G_{il}w_l (\Gamma^\beta G(I-w\otimes w))_{jk}\\
&\lesssim\sum_{\substack{\beta+\gamma=\alpha, |\alpha|\leq k\\ \gamma\neq\alpha}} \int_{r\geq\frac{\langle t\rangle}{16}}
|\Gamma^\alpha G||\partial_r\Gamma^\gamma Gw||w\Gamma^\beta G(I-w\otimes w)|\\
&\lesssim \int_{r\geq\frac{\langle t\rangle}{16}}|\Gamma^k G||\partial_r\Gamma^{k-1} Gw||w\Gamma^{k^{\prime}} G(I-w\otimes w)|\\
&\ \ \ \ +\int_{r\geq\frac{\langle t\rangle}{16}}|\Gamma^k G||\partial_r\Gamma^{k^{\prime}} Gw||w\Gamma^k G(I-w\otimes w)|\\
&\lesssim \frac{1}{(1+t)^{3/2}}E_k^{\frac{1}{2}}E_k^{\frac{1}{2}}E_{k^{\prime}+2}^{\frac{1}{2}}
+\frac{1}{1+t}E_k^{\frac{1}{2}}X_{k^{\prime}+3}^{\frac{1}{2}}Z^{\frac{1}{2}}\\
&\lesssim \frac{1}{(1+t)^{3/2}}E_k^{\frac{3}{2}}+\frac{1}{1+t}E_kZ^{\frac{1}{2}}.
\end{split}
\end{equation}

It remains to treat the pressure term in \eqref{Gequation}. However, thanks to Lemma \ref{Pressure}, this term is handled
exactly as the preceding ones.

Finally, we gather our estimate for \eqref{Gequation} to get
\begin{equation}
\widetilde{E}_k^{\prime}(t)+Z
\leq\mu Z +C_\mu\frac{1}{(1+t)^{3/2}}E_k^{3/2},
\end{equation}
with
\begin{equation}
\widetilde{E}_k= \sum_{|\alpha|\leq k}\int e^{-q}(|\Gamma^\alpha v|^2 +|\Gamma^\alpha G|^2)dx.
\end{equation}
Notice that $E_k\backsim \widetilde{E}_k$, we obtain, for $\mu$ small
\begin{equation}
\widetilde{E}_k^{\prime}(t)
\leq C_\mu\frac{1}{(1+t)^{3/2}}\widetilde{E}_k^{3/2},
\end{equation}
This implies that $E_k(t)$ remains bounded by $M\epsilon^2$ for all time, here $M$ is
a positive constant which depends only on $k$.
Hence we complete the proof of Theorem \ref{Main}.

\section{General Isotropic Elastodynamics}
For general isotropic elastodynamics, the energy functional has the form
$W=W(F)$ with
\begin{equation}
W(F)=W(QF)=W(FQ),
\end{equation}
for all rotation matrices: $Q=Q^T$, $\text{det} Q=1$. The first relation is due to frame indifference,
while the second one expresses the isotropy of materials. This implies that $W$ depends
on $F$ through the principle invariants of $FF^T$, namely $\text{tr} FF^T$, $\frac{1}{2}[(\text{tr}FF^T)^2-\text{tr}((FF^T)^2)]$
and $\text{det} FF^T$ in 3D. Setting $\tau=\frac{1}{2}\text{tr} FF^T$,
$\gamma=\frac{1}{4}[(\text{tr}FF^T)^2-\text{tr}((FF^T)^2)]$
and $\delta=\text{det} F=(\text{det} FF^T)^{\frac{1}{2}}$, we may assume that $W(F)=\bar{W}(\tau, \gamma, \delta)$, for some smooth function
$\bar{W}: \mathbb{R}^+ \times \mathbb{R}^+\times \mathbb{R}^+\rightarrow \mathbb{R}^+$.
Since
\begin{equation}
\frac{\partial \tau}{\partial F}= F, \quad\frac{\partial \gamma}{\partial F} =\text{tr}(FF^T)F-FF^TF
\quad \text{and}\quad\frac{\partial \delta}{\partial F}= \delta F^{-T},
\end{equation}
the Piola-Kirchhoff stress has the form
\begin{equation}
\begin{split}
S(F)\equiv\frac{\partial W(F)}{\partial F}&= \bar{W}_\tau(\tau, \gamma, \delta)F+\bar{W}_\gamma(\tau, \gamma, \delta)
(\text{tr}(FF^T)F-FF^TF)\\
&\ \ \ \ +\bar{W}_\delta(\tau, \gamma, \delta)\delta F^{-T}.
\end{split}
\end{equation}
We assume that the reference configuration is stress free, $S(I)=0$, so that
\begin{equation}\label{restrict1}
\bar{W}_\tau(\frac{3}{2}, \frac{3}{2}, 1)+2\bar{W}_\gamma(\frac{3}{2}, \frac{3}{2}, 1)+\bar{W}_\delta(\frac{3}{2}, \frac{3}{2}, 1)=0.
\end{equation}
The Cauchy stress tensor is
\begin{equation}
\begin{split}
T(F)&\equiv \delta^{-1} S(F)F^T\\
&=\delta^{-1}\bar{W}_\tau(\tau, \gamma, \delta)FF^T+\delta^{-1}\bar{W}_\gamma(\tau, \gamma, \delta)
(\text{tr}(FF^T)FF^T-FF^TFF^T)\\
&\ \ \ \ +\bar{W}_\delta(\tau, \gamma, \delta)I,
\end{split}
\end{equation}
and the term $\nabla\cdot FF^T$ in \eqref{EquationM} is replaced by $\nabla\cdot T(F)$. Let us now proceed to examine
this term.

Write
\begin{equation}
\begin{split}
T(F)=& [\bar{W}_\tau(\frac{3}{2}, \frac{3}{2}, 1)+2\bar{W}_\gamma(\frac{3}{2}, \frac{3}{2}, 1)]FF^T\\
&+[\delta^{-1}\bar{W}_\tau(\tau, \gamma, \delta)-\bar{W}_\tau(\frac{3}{2}, \frac{3}{2}, 1)][FF^T-I]\\
&+[\delta^{-1}\bar{W}_\tau(\tau, \gamma, \delta)-\bar{W}_\tau(\frac{3}{2}, \frac{3}{2}, 1)+\bar{W}_\delta(\tau, \gamma, \delta)]I\\
&+\delta^{-1}\bar{W}_\gamma(\tau, \gamma, \delta)(\text{tr}(FF^T)FF^T-FF^TFF^T)-2\bar{W}_\gamma(\frac{3}{2}, \frac{3}{2}, 1)FF^T\\
&\equiv \sum_{a=1}^4 T_a(F).
\end{split}
\end{equation}

Assume that
\begin{equation}\label{restrict2}
\bar{W}_\tau(\frac{3}{2}, \frac{3}{2}, 1)+2\bar{W}_\gamma(\frac{3}{2}, \frac{3}{2}, 1)>0.
\end{equation}
Then $T_1(F)$ gives rise to a Hookean term. Notice that assumption \eqref{restrict2} rules out the hydrodynamical
case $\bar{W}_\delta=0$. The principal invariants can be expanded about the identity as
follows:
\begin{equation}\label{tau}
\tau =\frac{1}{2}\text{tr}FF^T=\frac{1}{2}\text{tr}(I+G)(I+G^T)=\frac{3}{2}+\text{tr}G+\frac{1}{2}\text{tr}GG^T,
\end{equation}
\begin{equation}\label{gamma}
\begin{split}
\gamma&=\frac{1}{4}[(\text{tr}(FF^T))^2-\text{tr}((FF^T)^2)]\\
&=\frac{1}{4}(2\tau)^2-\frac{1}{4}\text{tr}(I+G+G^T+GG^T)^2\\
&=\frac{3}{2}+\mathcal{O}(|G|^2),
\end{split}
\end{equation}
and
\begin{equation}\label{delta}
\delta=\text{det}F=\text{det}(I+G)=1+\text{tr}G+\frac{1}{2}[(\text{tr}G)^2-\text{tr}G^2]+\text{det}G.
\end{equation}
In the case of incompressible motion, we have $\delta=1$, so from \eqref{delta}, we get
\begin{equation}
\text{tr}G+\frac{1}{2}[(\text{tr}G)^2-\text{tr}G^2]+\text{det}G=0,
\end{equation}
and hence from \eqref{tau}
\begin{equation}
\tau-\frac{3}{2}=\mathcal{O}(|G|^2).
\end{equation}
Thus, we see that for $|G|\ll 1$
\begin{equation}
T_2(F)=\mathcal{O}[(\tau-\frac{3}{2})|G|+(\gamma-\frac{3}{2})|G|]=\mathcal{O}(|G|^3),
\end{equation}
which produces nonlinearities which are of cubic order or higher, while $T_3(F)$ leads to a
gradient term which can be included in the pressure. Similarly, the term $T_4(F)$ can
be written as a term $\mathcal{O}(|G|^3)$ plus a term  which can be included in the pressure.
The conclusion is that the general incompressible
isotropic case differs from the Hookean case by a nonlinear perturbation which is cubic
in the displacement gradient G. Such terms present no further obstacles in the proof of our theorem in 3D.
Hence, for general isotropic elastodynamics, we have the following theorem.
\begin{thm}
Let $(v_0, G_0)\in H^k_\Lambda$, with $k\geq 9$. Suppose that $(v_0, F_0) = (v_0, I+G_0)$ satisfy the constraints
\eqref{ConstraintG1} \eqref{ConstraintG2}, and $\|(v_0, G_0)\|_{H^k_\Lambda}<\epsilon$.
Assume that the smooth strain energy function $W(F)$ is isotropic, frame indifferent, and
satisfies \eqref{restrict1} \eqref{restrict2}.
Then there exist two positive constants $M$ and $\epsilon_0$ which depend only
on $k$ such that, if $\epsilon\leq\epsilon_0$, then the system of incompressible isotropic elastodynamics
\begin{equation}
\begin{cases}
\partial_tv + v\cdot\nabla v + \nabla p = \nabla\cdot T(F),\\
\partial_tF + v\cdot\nabla F = \nabla vF,\\
\nabla\cdot v = 0,\ \ \ \ \nabla\cdot F^T=0.
\end{cases}
\end{equation}
with initial data $(v_0, F_0) = (v_0, I+G_0)$ has a unique global solution $(v, F) = (v, I+G)$,
which satisfies $(v, G)\in H_\Gamma^k$ and $E_k^{1/2}(t)\leq M\epsilon$ for all $t\in [0, +\infty)$.

\end{thm}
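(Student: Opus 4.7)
The plan is to follow exactly the Hookean scheme developed in Sections 2--9 and show that the cubic corrections present in the general isotropic case produce only harmless perturbations to every estimate. I would first rewrite the system in terms of $G=F-I$, obtaining
\begin{equation*}
\partial_t v - \nabla\cdot G = -\nabla\tilde p - v\cdot\nabla v + \nabla\cdot(GG^T) + \nabla\cdot R(G),
\end{equation*}
where $\tilde p$ is a redefined pressure that absorbs the gradient contributions coming from $T_3(F)$ and the isotropic part of $T_4(F)$, and $R(G)$ collects the genuinely tensorial remainders from $T_2(F)$ and the non-gradient part of $T_4(F)$. Using \eqref{tau}, \eqref{gamma}, \eqref{delta} together with the incompressibility constraint $\delta = 1$, I would verify carefully (as the excerpt sketches) that $R(G)=\mathcal{O}(|G|^3)$ and, more importantly, that its structure is a smooth function of $G$ with vanishing quadratic part, so that $R(G)$ and all its $\Gamma^\alpha$ derivatives obey the schematic bound
\begin{equation*}
|\Gamma^\alpha R(G)| \lesssim \sum_{\alpha_1+\alpha_2+\alpha_3=\alpha}|\Gamma^{\alpha_1}G|\,|\Gamma^{\alpha_2}G|\,|\Gamma^{\alpha_3}G|.
\end{equation*}

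Next I would revisit each ingredient of the Hookean proof in the presence of $R(G)$. In Lemma \ref{Pressure}, the modified pressure satisfies $\Delta\Gamma^\alpha\tilde p = \nabla\cdot f_\alpha + \nabla\cdot\nabla\cdot\Gamma^\alpha R(G)$, and both terms split into a null-type product times a $\Gamma$-derivative (for the quadratic piece) or into a cubic product with at least one derivative hitting a single factor (for $R$); the Calder\'on--Zygmund boundedness of $\Delta^{-1}\nabla\otimes\nabla$ still applies. In the weighted $L^2$ estimate (Lemma \ref{Lemma6.1}) and the comparison $X_k\lesssim E_k$ (Lemma \ref{EX}), the cubic terms are strictly better than the quadratic ones since an extra factor $|G|\lesssim\langle r\rangle^{-1}E_k^{1/2}$ is available by \eqref{Ginequality2}; so any place where I previously got $E_k + E_k^{1/2}X_k^{1/2}$ I now also gain an additional $E_k^{1/2}$ factor, which is harmless under the smallness assumption $E_k\ll 1$. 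The good-unknown decay \eqref{DecayOS2} and the interior $t^{-3/2}$ decay of Lemma \ref{Genergy} go through verbatim, since the extra cubic source terms inherit the same space-time weights.

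Finally, the ghost-weight energy estimate of Section 9 is the place to check most carefully, because this is where we must preserve the null structure. After $\Gamma^\alpha$-differentiation and pairing with $e^{-q}(\Gamma^\alpha v,\Gamma^\alpha G)$, the contribution of $\nabla\cdot\Gamma^\alpha R(G)$ produces terms of the form $\int e^{-q}\Gamma^\alpha v\cdot\nabla\cdot\Gamma^\alpha R(G)$, which I would reorganize by putting the derivative on the $\Gamma^\alpha v$ factor (using $\nabla\cdot v=0$ for the top order, and distributing derivatives for lower order). The resulting pieces are trilinear in $(v,G)$ with at most one derivative on the lowest-order factor; they are bounded by exactly the same $L^2$-$L^2$-$L^\infty$ schemes used in Section 9, with the new factor of $|G|$ contributing $\langle r\rangle^{-1}\lesssim\langle t\rangle^{-1}$ in the outside-cone region (via Lemma \ref{Ginequality2}) and $\langle t\rangle^{-1}$ in the inside-cone region (via \eqref{DecayT}). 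In every case the time-integrable factor $(1+t)^{-3/2}$ is preserved, and if anything is improved. The gradient contributions from $T_3$ and the isotropic part of $T_4$ are, by construction, absorbed into $\nabla\tilde p$ and estimated via the modified Lemma \ref{Pressure}.

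The main obstacle I anticipate is the careful algebraic bookkeeping needed to verify that $T_3(F)+(\text{isotropic part of }T_4(F))$ is genuinely of the form $\nabla(\text{scalar})$ so that it can be legitimately folded into the pressure, and that the remaining non-gradient cubic parts of $T_4$ have no trace or divergence constraint that would obstruct the null-form estimates; this requires expanding $\bar W_\tau(\tau,\gamma,\delta)$, $\bar W_\gamma(\tau,\gamma,\delta)$ and $\bar W_\delta(\tau,\gamma,\delta)$ around $(\tfrac{3}{2},\tfrac{3}{2},1)$ and using \eqref{restrict1}, \eqref{delta} and $\delta=1$ consistently. Once this algebraic reduction is done, the proof reduces to a cubic perturbation of the Hookean case, for which all the estimates in Sections 5--9 extend mutatis mutandis, closing the bootstrap and yielding $E_k^{1/2}(t)\leq M\epsilon$ uniformly in $t$, which is Theorem \ref{Main} in the general isotropic setting.
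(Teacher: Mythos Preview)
Your proposal is correct and follows essentially the same route as the paper: decompose $T(F)$ into the Hookean piece $T_1$, the gradient pieces $T_3$ and the isotropic part of $T_4$ (absorbed into the pressure), and the genuinely tensorial remainder $R(G)=\mathcal{O}(|G|^3)$, then observe that cubic corrections are harmless perturbations of the Hookean estimates. The paper's own argument is in fact considerably terser than yours---after the decomposition it simply asserts that ``such terms present no further obstacles in the proof of our theorem in 3D''---so your explicit walkthrough of Lemmas \ref{Pressure}, \ref{Lemma6.1}, \ref{EX}, the decay estimates \eqref{DecayOS2} and Lemma \ref{Genergy}, and the ghost-weight step is more detailed than what the paper provides but entirely in the same spirit.
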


%----------------------------------------------------------------------------acknowledgement
\section*{Acknowledgement}
The authors were supported by
NSFC (grant No.11171072, 11121101 and 11222107), NCET-12-0120, Shanghai Talent Development Fund and SGST 09DZ2272900.

%-----------------------------------------------------------------------------bibliography


\begin{thebibliography}{99}
\bibitem{Agemi00}
    \newblock R. Agemi,
    \newblock Global existence of nonlinear elastic waves,
    \newblock Invent. Math. 142 (2000), no. 2, 225--250.
\bibitem{S. Alinhac_2001}
	\newblock S. Alinhac,
	\newblock The null condition for quasilinear wave equations in two space dimensions I,
	\newblock Invent. Math. 145 (2001), 597-618.
\bibitem{S. Alinhac_2010}
    \newblock S. Alinhac,
    \newblock Geometric analysis of hyperbolic differential equations: an introduction,
    \newblock London Mathematical Society Lecture Note Series: 374.
\bibitem{Christodoulou_1986}
	\newblock D. Christodoulou,
	\newblock Global solutions of nonlinear hyperbolic equations for small initial data,
	\newblock Comm. Pure Appl. Math. 39 (1986), 267-282.
\bibitem{F. John_19841}
	\newblock F. John,
	\newblock Formation of singularities in elastic waves,
	\newblock Lecture Notes in Physics 195 Springer-Verlag, New York (1984), 194-210.
\bibitem{K. Hidano_2004}
	\newblock K. Hidano,
	\newblock An elementary proof of global or almost global existence for qusilinear wave equations,
	\newblock Tohoku Math. J. 56 (2004), 271-287.
\bibitem{K. Hidano_20041}
	\newblock K. Hidano,
	\newblock The global existence theorem for quasi-linear wave equations with multiple speeds,
	\newblock Hokkaido Math. J. 33 (2004), 607-636.
\bibitem{Hormander_1997}
	\newblock L. H\"{o}rmander,
	\newblock Lectures on nonlinear hyperbolic differential equations,
	\newblock Math. Appl. 26, Springer Verlag, 1997.
\bibitem{A. Hoshiga_2000}
	\newblock A. Hoshiga, H. Kubo,
	\newblock Global small amplitude solutions of nonlinear hyperbolic systems with a critical exponent under the null condition,
	\newblock SIAM J. Math. Anal. 31 (2000) 486-513.
\bibitem{S. Katayama_1993}
	\newblock S. Katayama,
	\newblock Global existence for systems of nonlinear wave equations in two space dimensions,
	\newblock Publ. RIMS, Kyoto Univ. 29 (1993) 1021-1041.
\bibitem{Kessenich_2009}
	\newblock P. Kessenich,
	\newblock Global existence with small initial data for three dimensional incompressible isotropic viscoelastic materials,
	\newblock ArXiv: 0903. 2824.
\bibitem{Klainerman_1986}
	\newblock S. Klainerman,
	\newblock The null condition and global existence to nonlinear wave equations,
	\newblock Nonlinear systems of partial differential equations in
              applied mathematics, Part 1 (Santa Fe, N.M., 1984), 293-326,
              Lectures in Appl. Math., 23, Amer. Math. Soc., Providence, R.I., 1986.
 \bibitem{LeiZ13}
    \newblock Z. Lei,
    \newblock Global well-posedness of incompressible elastodynamics in 2D,
    \newblock ArXiv: 1402.6605.              
\bibitem{LeiLZ08}
    \newblock Z. Lei, C. Liu, Y. Zhou,
    \newblock Global solutions for incompressible viscoelastic fluids,
    \newblock Arch. Ration. Mech. Anal. 188 (2008), no. 3, 371-398.
\bibitem{LeiLZ2011}
    \newblock Z. Lei, F. H. Lin, Y. Zhou,
    \newblock Global solutions of the evolutionary faddeev model with small initial data,
    \newblock Acta Mathematica Sinica, English Series, 27 (2011), no. 2, 309-328.
\bibitem{LZT_2012}
	\newblock Z. Lei, T. C. Sideris, Y. Zhou,
	\newblock Almost global existence for 2-D incompressible isotropic elastodynamics,
	\newblock ArXiv: 1212. 6391.
\bibitem{Sideris_1996}
	\newblock T. C. Sideris,
	\newblock The null condition and global existence of nonlinear elastic waves,
	\newblock Invent. Math. 123 (1996), 323-342.
\bibitem{Sideris_2000}
	\newblock T. C. Sideris,
	\newblock Nonresonance and global existence of prestressed nonlinear elastic waves,
	\newblock Ann. of Math. (2), 151 (2000), 849-874.
\bibitem{Sideris_2005}
    \newblock T. C. Sideris, B. Thomases,
    \newblock Global existence for three-dimensional incompressible isotropic elastodynamics via the incompressible limit,
    \newblock Comm. Pure Appl. Math. 58 (2005), no. 6, 750--788.
\bibitem{SiderisB_2005}
    \newblock T. C. Sideris, B. Thomases,
    \newblock Local energy decay for solutions of multi-dimensional isotropic symmetric hyperbolic systems,
    \newblock J. Hyperbolic Differ. Equ. 3 (2006), no. 4, 673-690.
\bibitem{SiderisB_2007}
    \newblock T. C. Sideris, B. Thomases,
    \newblock Global existence for 3d incompressible isotropic elastodynamcis,
    \newblock Comm. Pure Appl. Math. 60 (2007), no. 12, 1707--1730.
\bibitem{Sideris_2002}
	\newblock T. C. Sideris, S. -Y. Tu,
	\newblock Global existence for systems of nonlinear wave equations in 3D with multiple speeds,
	\newblock SIAM J. Math. Anal. 33 (2002), 477-488.
\bibitem{TahZ_1998}
	\newblock A. S. Tahvildar-Zadeh,
	\newblock Relativistic and nonrelativistic elastodynamics with small shear strains,
	\newblock Ann. Inst. H. Poincar\'{e} - Phys. Th\'{e}or. 69 (1998), 275-307.


\end{thebibliography}
\end{document}